\date{\today}
\author{Bertrand Deroin \and Romain Dujardin}
\thanks{B.D.'s research was partially supported by ANR-08-JCJC-0130-01,  ANR-09-BLAN-0116. 
%R.D.'s research  was partially supported by ANR project BERKO and by ECOS project C07E01}
}
\address{CNRS \\ D\'epartement de Math\'ematique d'Orsay \\ B\^atiment 425,
Universit\'e de Paris Sud,
91405 Orsay cedex, France.}
\email{bertrand.deroin@math.u-psud.fr}
\address{LAMA  \\
Universit\'e Paris-Est Marne-la-Vall\'ee  \\
5 boulevard Descartes \\
77454 Champs sur Marne  \\
France}
\email{romain.dujardin@univ-mlv.fr}
\title[Complex projective structures]{Complex projective structures: 
Lyapunov exponent, degree and harmonic measure}
\newcommand{\cc}{\mathbb{C}}
\newcommand{\bb}{\mathbb{B}}
\newcommand{\rr}{\mathbb{R}}
\newcommand{\dd}{\mathbb{D}}
\newcommand{\zz}{\mathbb{Z}}
\newcommand{\nn}{\mathbb{N}}
\newcommand{\pp}{\mathbb{P}}
\newcommand{\ee}{\mathbb{E}}
\newcommand{\hh}{\mathbb{H}}
\newcommand{\e}{\varepsilon}
\newcommand{\cv}{\rightarrow}
\newcommand{\fr}{\partial}
\newcommand{\om}{\Omega}
\newcommand{\set}[1]{\left\{#1\right\}}
\newcommand{\norm}[1]{\left\Vert#1\right\Vert}
\newcommand{\abs}[1]{\left\vert#1\right\vert}
\newcommand{\cd}{{\cc^2}}
\newcommand{\pu}{{\mathbb{P}^1}}
\newcommand{\rest}[1]{ \arrowvert_{#1}}
\newcommand{\unsur}[1]{\frac{1}{#1}}
\renewcommand{\o}{\omega}
\newcommand{\lrpar}[1]{\left(#1\right)}
\newcommand{\bra}[1]{\left\langle #1\right\rangle}
\newcommand{\la}{\lambda}
\newcommand{\lo}{{\lambda_0}}
\newcommand{\geom}{\dot \wedge} 
 \newcommand{\dev}{\mathsf{dev}}
 \newcommand{\hol}{\mathsf{hol}}
\newcommand{\La}{\Lambda}
\newcommand{\PSL}{\mathrm{PSL}(2,\mathbb C)}
\newcommand{\PSLR}{\mathrm{PSL}(2,\mathbb R)}
\newcommand{\tbif}{T_{\mathrm{bif}}}
\newcommand{\note}[1]{\marginpar{\tiny #1}}
\newcommand{\itm}{\item[-]}
\DeclareMathOperator{\supp}{Supp}
\DeclareMathOperator{\vol}{vol}
\DeclareMathOperator{\Int}{Int}
\DeclareMathOperator{\tr}{tr}
\newtheorem{prop} {Proposition} [section]
\newtheorem{lem}[prop] {Lemma}
\newtheorem{cor}[prop]{Corollary}
\newtheorem{theo}{Theorem}
\newtheorem{coro}[theo]{Corollary}
\newtheorem{defprop}[prop]{Definition-Proposition}
\theoremstyle{remark}
\newtheorem{rmk}[prop]{Remark}
\begin{document}

\begin{abstract}
We study several new invariants associated to a holomorphic projective structure on a Riemann surface of finite analytic type:
the Lyapunov exponent of its holonomy which is of probabilistic/dynamical nature and was introduced in our previous work; 
the degree which measures the asymptotic covering rate of the developing map; and a family of harmonic measures on the Riemann sphere, 
previously introduced by Hussenot. We show that the degree and the Lyapunov exponent are related by a simple formula and give estimates for the Hausdorff dimension of the harmonic measures in terms of the Lyapunov exponent. 
In accordance with the famous ``Sullivan dictionary", this leads to a description of the space of such projective structures that is reminiscent of that of the space of polynomials in holomorphic dynamics.
\end{abstract}

\maketitle

 \section*{Introduction}
Our purpose in this paper is to introduce several new objects associated with a $\mathbb{CP}^1$-structure  
on a Riemann surface  $X$ of finite type, and study their relationships. In the non-compact case, we assume that the projective 
structure is ``parabolic at the cusps", in a sense that will be made precise below. 

To such a projective structure $\sigma$, we associate: 
\begin{itemize}
\itm a {\em Lyapunov exponent} $\chi(\sigma)$, which was constructed in \cite{Bers1};
\itm a {\em degree} $\deg(\sigma)$ which is simply the (normalized) asymptotic covering degree of the  developing map 
$\widetilde X\cv \pu$ ($\widetilde X$ the universal cover of $X$);
\itm a family of {\em harmonic measures} $(\nu_x)_{x\in \widetilde X}$ on $\pu$, which generalize the traditional harmonic measures on the limit sets of Kleinian groups (throughout the paper, $\pu$ stands for $\cc\pu$).
\end{itemize}
 We will show that the Lyapunov exponent and the degree   are related by a simple formula, and give estimates for the Hausdorff dimension of the harmonic measures in terms of $\chi$.  We give several applications of these ideas to the study of the space $P(X)$ of (parabolic) projective structures on $X$, in particular revealing new aspects of the famous Sullivan dictionary between rational and 
 M\"obius dynamics on $\pu$. 

Before proceeding to a detailed presentation of these results, let us present the main characters  of the story.  
 
\subsection{Parabolic $\mathbb P^1$-structures}\label{ss:parabolic type PS}
Let us fix   a Riemann surface $X$ of finite type (genus $g$ with $n$ punctures)
with negative Euler characteristic $\mathrm{eu(X)}=2-2g-n$, 
as well as  a universal cover  $c : \widetilde{X}\rightarrow X$. 
  Throughout this paper,  by definition the (unmarked) fundamental group $\pi_1(X)$ is  the deck group of this covering.
Recall that the automorphism group of the Riemann sphere is the group 
$\text{PSL} (2,\mathbb C)$ acting on $\mathbb  P^1 = \mathbb C \cup \set{\infty} $ by the formula 
$ \lrpar{\begin{smallmatrix}
   a & b \\ c& d                                                                                                                                                                                                                                                                                                                                                                                                                                                                                                                                                                                                                                                      \end{smallmatrix}} \cdot z = \frac{az + b }{cz + d}$. By definition a {\em Kleinian group} is a discrete subgroup of $\PSL$. 
   
It is convenient to define a  $\pu$-structure on $X$ 
in terms of the so-called development-holonomy pair $(\mathsf{dev},\mathsf{hol})$. 
Consider a non constant locally injective meromorphic map  $\mathsf{dev} : \widetilde{X} \rightarrow \mathbb P ^1 $, satisfying the equivariance property $\mathsf{dev}\circ \gamma  = \mathsf{hol}(\gamma)\circ \mathsf{dev} $, where  
 $\mathsf{hol}$ is a representation $\pi_1(X)\cv\text{PSL} (2,\mathbb C)$. 
 %Such maps will be referred to as \textit{developing maps}, and the representation $\rho$ as the \textit{holonomy representation}. 
 %Two such  maps $\mathsf{dev}_1, \mathsf{dev}_2$   differing  by the  left composition by an automorphism of $\mathbb P^1$
  %will be declared  to equivalent. A %\textit{branched 
If $A\in \PSL$, the pairs $(\mathsf{dev},\mathsf{hol})$ and $(A\circ \mathsf{dev},A \circ \mathsf{hol}\circ A^{-1})$ will be declared as equivalent. By definition, a
\textit{$\mathbb P^1 $-structure} is an equivalence class of such pairs.
 We refer here to the survey paper by Dumas, see~\cite{dumas} for a %more general
 comprehensive   treatment of %the theory of $\mathbb P^1$-structures. 
this notion. 

When the surface $X$ is not compact (hence by assumption it is biholomorphic to a compact Riemann surface punctured at a finite set),  we restrict ourselves to the subclass of \textit{parabolic} $\mathbb P^1$-structures. Such a structure has the following well-defined local model around the punctures: each 
 puncture has a neighborhood which is {\em projectively}
  equivalent to the quotient of  the upper half plane
   by the translation $z\mapsto z+1$.  
%Those are the structures such that the cusps are projectively equivalent to a semi-infinite annulus $\mathbb H / z\mapsto z+1$. More precisely, for any puncture $p$, there is a neighborhood of $p$ and a \note{prefereably would be to say that the cusp are projectively equivalent to the quotient of $\mathbb H$ by a cyclic group generated by $z\mapsto z+1$} biholomorphism $z : \mathcal N_ p \rightarrow \mathbb D ^* $ such that the developing map in restriction to a component of $\pi^{-1} ( \mathcal N_p )$ is of the form $\mathsf{dev} (z) = \frac{a \log z + b }{c \log z + d} $ for some numbers $a,b,c,d$ with $ad  - bc \neq 0$. 

For instance, the canonical projective structure $\sigma_{\rm Fuchs}$ induced by uniformization (i.e. viewing $X$ as a quotient of $\hh$ under a Fuchsian group) is of this type. 
More generally, the proof of the Ahlfors finiteness theorem (see  \cite[Lemma 1]{ahlfors}) shows that if $\Gamma$ 
is any torsion free Kleinian group, and $\om$ is the orbit of a discontinuity component, then the induced 
 projective structure on $\om/\Gamma$ is parabolic. These have been known as {\em covering projective structures}, because  
 the developing map is a covering onto its image in this case \cite{kra1, kra2}. An important example is given  
by quasi-Fuchsian deformations of the canonical structure $\sigma_{\rm Fuchs}$.
%Important examples of parabolic $\mathbb P^1$-structures are given by 
%the same   is true for  quotients of discontinuity sets of torsion 
%free Kleinian groups . 

There are many other examples of parabolic $\mathbb P^1$-structures. For instance surgery operations such as grafting (see Hejhal's 
original construction in \cite{hejhal grafting}) may produce a parabolic $\mathbb P^1$-structure with holonomy a Kleinian group  that is not of covering type. Such projective  structures are usually called 
\textit{exotic}. %In the special case the holonomy is quasi-Fuchsian, they are organized in a countable number of open subsets called exotic components, but there are also lower dimensional subsets of exotic structures. 
%The subset of parabolic $\mathbb P^1$-structures on $X$ different from covering or exotic ones form a non empty open subset of $X$. 
There are yet other examples of $\mathbb P^1$-structures: a remarkable theorem of Gallo, Kapovich and Marden \cite{gkm}
asserts that (when $X$ is compact) a  representation $\pi_1(X) \rightarrow \text{PSL}(2,\mathbb C)$ is the holonomy of a $\mathbb P^1$-structure on $X$ (for some Riemann surface structure on $X$) if and only if it is non elementary and it lifts to
 a representation with values in $\text{SL}(2,\mathbb C)$. In particular, there exist  $\mathbb P^1$-structures with holonomy a non discrete (or even dense) subgroup of $\text{PSL} (2,\mathbb C)$.

It is  also  of interest  to deal with  the case of {\em branched $\pu$-structures}, 
  where the local injectivity assumption on the developing map is dropped.  Examples include   conical metrics
   of constant curvature equal to $1,0$ or $-1$, with conical points of angle  multiple of $2\pi$. It 
turns out that some of the results in the paper carry over to this setting. 
The necessary adaptations will be explained in Appendix \ref{sec:appendix}. 

\subsection{The degree} %We denote by $ c : \widetilde{X} \rightarrow X$ the universal cover of $X$. 
Let $\hh$ denote the upper half-plane $\set{\tau, \Im(\tau)>0}$, and 
  $g_P = \frac{2|d\tau|}{\Im \tau}$ be the \textit{Poincar\'e} (or {\em hyperbolic}) {\em  metric} on $\hh$, that is
the  unique complete conformal metric of curvature $-1$,  
which is invariant under $\mathrm{Aut}(\mathbb H)\simeq \mathrm{PSL}(2, \rr)$.
Taking the pull-back of this metric under any biholomorphism between $\widetilde{X}$ and $\mathbb H$ 
endows $\widetilde{X}$ with a complete conformal metric of curvature $-1$ invariant under $\pi_1(X)$, 
therefore this metric descends to $X$.  It is well known that when $X$ is of finite type, the hyperbolic metric has finite volume. 

Recall that a representation $\pi_1(X) \rightarrow \text{PSL} (2, \mathbb C)$ is \text{non elementary} if it does not preserve any 
probability measure on the Riemann sphere. The holonomy of a parabolic projective structure %without branched point 
is always non elementary: see \cite[Theorem 11.6.1, p. 695]{gkm} for the compact case, and \cite[Lemma 10]{cdfg} for a proof in the case of the fourth punctured sphere, which readily extends to all punctured surfaces. 

\medskip

If $\sigma$ is a parabolic projective structure, we want to define $\delta(\sigma)$ as a nonnegative
 number counting the average asymptotic covering degree of 
$\mathsf{dev}_\sigma : \widetilde X \cv \pu$. %Such a number could  be defined by  means of Nevanlinna theory (see \cite{???} for an account on this\note{carne cite le bouquin de nevanlinna lui m\^eme}).  However in our case we have   much more precise information.
For any $x\in \widetilde{X}$ we denote by $B(x,R)$ the ball centered at $x$ of radius $R$ in the Poincar\'e metric, 
and by $\vol$ the hyperbolic volume.

\begin{defprop} \label{defprop:degree}
Let $X$ be a Riemann surface of finite type and $\sigma$ be a parabolic projective structure on $X$. Choose a 
 developing map  $\mathsf{dev}: \widetilde{X} \rightarrow \mathbb P^1$. 
% be a developing map for a parabolic $\mathbb P^1$-structure on $X$ whose holonomy is non elementary.  
 Let $(x_n)$ be a sequence of points in $\widetilde{X}$ whose projections $c (x_n)$ stay in  a compact subset of $X$,   $(R_n)$ be a sequence of radii tending to infinity, and $(z_n)$ be an arbitrary sequence in $\pu$. Then the   limit  
\begin{equation}\label{eq:limit}  \delta = \lim_{n\rightarrow \infty} \frac{ \#B(x_n,R_n)\cap \dev^{-1}(z_n)}{\mathrm{vol}(B(x_n,R_n))} \end{equation}
exists, and does not depend on the chosen sequences $(x_n)$, $(R_n)$ nor on the developing map $\mathsf{dev}$. 
The number $\mathrm{deg}(\sigma)=  \mathrm{vol} (X) \delta$ is by definition the
{\em degree} of the projective structure.  
\end{defprop}

The existence of the limit in~\eqref{eq:limit}   is not obvious, in particular due to the possibility of boundary effects. The proof ultimately relies on a result of Bonatti and G\'omez-Mont \cite{bgm} and will be carried out  in \S\ref{ss:existence of degree}. 
Observe that this result is reminiscent from Nevanlinna theory, though the information we obtain is much more precise.%: we can derive the asymptotics of Nevanlinna's 
%characteristic and show that it is governed by the degree.
We can actually derive the asymptotics of the Nevanlinna theoretic 
 counting function $N(r,\dev,z)$ and characteristic $T(r, \dev)$ of the developing map
% $\mathsf{dev}$ in Nevanlinna theory 
(see \cite{nevanlinna}) and show that these quantities are 
governed by the degree. Namely, for every $z\in \mathbb P^1$  an easy computation shows that
\begin{equation} \label{eq:nevanlinna characteristic} N(r,\dev,z) \underset{r\rightarrow \infty}\sim  T(r,  \dev ) \underset{r\rightarrow 1}\sim 2\pi \delta \log\big(\frac{1}{1-r}\big).\end{equation}
%Note   however that the information provided in  \eqref{eq:limit} is much more precise than  usually permitted by Nevanlinna theory.
Besides, Nevanlinna theory is known to have connections with Brownian motion, see \cite{carne}. In this paper we will explore this relationship from a different point of view.

The     reason for introducing the normalized invariant $\mathrm{deg}(\sigma)$ is that $\delta(\sigma)$ is invariant under finite coverings, hence does not behave like a degree. %The normalization $\mathrm{deg}(\sigma)  = \mathrm{vol} (X) \delta$ will be clearer when dealing with branched projective structures (see Remark \ref{rmk:normalization}).

 We also show that projective structures with vanishing degree 
are exactly the covering projective structures (Proposition \ref{p:vanishing}).

\subsection{The Lyapunov exponent} \label{ss:lyapunov}
The second invariant,   the \textit{Lyapunov exponent} of a parabolic projective structure was defined in our previous work~\cite{Bers1}. It depends only on the holonomy $\mathsf{hol}_\sigma$ of the structure, and also on the induced Riemann surface structure on $X$. Fix a basepoint $\star\in X$, in particular an identification  between
  the covering group $\pi_1(X)$ and the usual fundamental group $\pi_1(X,\star)$. 
  As $X$ is endowed with its Poincar\'e metric,  Brownian motion on $X$ is well-defined. Throughout
   the paper, {\em Brownian motion} will refer to the stochastic process with continuous time whose infinitesimal generator is the hyperbolic laplacian (instead of $\unsur{2}\Delta$, which is another usual convention). 
  Let $W_\star$ be the Wiener measure on the set of continuous paths $\omega: [0,\infty) \rightarrow X$ 
  starting at $\omega(0)= \star$.

 \begin{defprop}\label{def:lyap}
 Let $X$ and $\sigma$ be as above. 
Define a family of loops as follows: for $t>0$, consider a Brownian path $\o$ issued from $\star$, and concatenate 
$\o\rest{[0, t]}$ with a shortest geodesic joining $\o(t)$ and $\star$, 
thus obtaining a closed loop $\widetilde \o_t$. Then for $W_\star$ a.e. Brownian path $\o$ the limit 
\begin{equation}\label{eq:deflyap}
\chi (\sigma) = \lim_{t\cv\infty} \unsur{t}\log \norm{\mathsf{hol}\lrpar{\widetilde \o_t}}
\end{equation}
 exists and does not depend on $\o$.  
This number is by definition the Lyapunov exponent of $\sigma$.
 \end{defprop}

Here $\norm{\cdot}$ is any matrix norm on $\PSL$.
The existence of  the limit in \eqref{eq:deflyap} was established in \cite[Def-Prop. 2.1]{Bers1}. As expected it is 
 a consequence of the subadditive ergodic theorem.  With  notation as in  \cite{Bers1}, 
 $\chi(\sigma)  = \chi_{\rm Brown} (\mathsf{hol})$. 
 Another way to define $\chi (\sigma)$ goes as follows (see \cite[Rmk 3.7]{Bers1}:   
  identify  $\pi_1(X)$ with a Fuchsian group $\Gamma$ and 
   independently   random  elements  $\gamma_n  \in \Gamma\cap B_{\hh}(0, R_n)$,  relative to the counting measure.   
   Here $(R_n)$ is a sequence tending to infinity  as fast as, say $n^\alpha$ for $\alpha>0$. 
 Then almost surely  
 $$\unsur{d_\hh(0, \gamma_n(0))  }\log \norm{\mathsf{hol}(\gamma_n)} \underset{n\cv\infty}\longrightarrow \chi(\sigma).$$

\subsection{The harmonic measures}\label{ss:harmonic measure} 
%Let us define a third invariant associated to a $\mathbb P^1$-structure, in the case the holonomy is non elementary: 
The third object that we associate to a $\mathbb P^1$-structure on $X$ is 
a family of \textit{harmonic measures}  $\{ \nu_x \} _{x\in \widetilde{X}}$ on the Riemann sphere, indexed by $\widetilde X$.
It can be defined in    several ways. The following appealing presentation was introduced by 
  Hussenot in his PhD thesis \cite{hussenot}: %\note{mettre en lien avec la th\'eorie des "boundary values of meromorphic functions", un peu d\'esuette mais tr\`es riche dans la litt\'erature}

\begin{defprop}[Hussenot] \label{defprop:harmonic measure}
Let $X$ be a Riemann surface of finite type and $\sigma$ be a parabolic projective structure on $X$. Choose a 
representing pair $(\mathsf{dev}, \mathsf{hol})$. 
%Let $\mathsf{dev}: \widetilde{X} \rightarrow \mathbb P^1$ be a meromorphic map equivariant with respect to a non elementary representation $\rho : \pi_1 (X) \rightarrow \text{PSL} (2,\mathbb C)$.
 Then for every $x\in \widetilde X$, and $W_x$ a.e. Brownian path starting at $\omega(0)=x$, 
 there exists a point $\mathrm e(\omega)$ on   $\pu$ defined by the property  that 
\[   \frac{1}{t} \int _0 ^t \mathsf{dev}_*\lrpar{\delta _{\omega (s)}} ds  \underset{t\rightarrow +\infty}\longrightarrow  \delta_{\mathrm e(\omega)} . \]
The distribution of the point $\mathrm e(\omega )$ subject to the condition that $\omega(0)= x$ is the measure $\nu_x$.
\end{defprop}

For covering $\pu$-structures, we recognize the classical harmonic measures on the  limit set.  
%An important property that will be used in the sequel is the fact that the measures $\nu_x$ depend harmonically on the $x$-variable. 
%\note{speak about the notion of boundary?} 

\medskip

Another definition of the harmonic measures is based on the so-called Furstenberg boundary map, 
which was designed in \cite{furstenberg},
  based on the discretization of Brownian motion in $\hh$
   (see also \cite[Theorem 3]{margulis} for a different approach). 
Furstenberg shows that if $\Gamma\subset \text{PSL}(2,\mathbb R)$ is a cofinite Fuchsian group and 
$\rho:\Gamma\cv\PSL$ is a non-elementary representation, there exists  a unique measurable
 equivariant mapping $\theta: \mathbb P^1(\mathbb R)\cv\pu$ 
 %\note{pourquoi faire une nouvelle notation pour $\mathbb P^1(\mathbb R)$?} 
 defined  a.e. with respect to Lebesgue measure
  (here $\mathbb P^1(\mathbb R)$ is viewed as $\fr\hh$)
  %\note{cette assertion n'est pas vraiment ds Fustenberg... c'est bizarre qd m\^eme qu'elle ni soit pas, mais sous une forme indirecte}. 
  Choose a biholomorphism
   $\widetilde X\simeq \hh$, thereby identifying  $\pi_1(X)$ with a cofinite Fuchsian group. 
   For   $\tau\in \hh$,  recall that  the classical harmonic measure $m_\tau$ is 
   a probability measure with smooth density on $\mathbb P^1(\mathbb R)$, defined as
the exit distribution of Brownian paths issued from $\tau$. 
%This is the probability measure with smooth density     $\frac{\Im (\tau) da}{\pi  \abs{a-\tau}^2}$ on   $\mathbb P^1(\mathbb R)$, see \ref{p:harmonic measures}\note{c'est peut \^etre un peu trop pr\'ecis tout ca pour l'intro je ne sais pas..}). 
The harmonic measure $\nu_x$ is then   defined by   $\nu_x = \theta_*m_\tau$, where
 $x\in \widetilde{X}$ corresponds to $\tau \in \hh$, and $\theta$ is associated to $\dev$. 
From this perspective it is clear that, the measures $\nu_x$ are mutually absolutely continuous and depend harmonically on $x$. 
%\note{en fait c'est d\'ej\`a clair dans la pr\'esentation de Hussenot en vertu  du thm des martingales}
%Also,  like the Lyapunov exponent, the harmonic measures 
%  only depend  on the Riemann surface structure  and   the holonomy of $\sigma$. \note{int�r�t de cette remarque?}

\subsection{The main results}
The  main result in this paper  is the following formula, relating the Lyapunov exponent and the degree of a $\pu$-structure. 

\begin{theo}\label{theo:formula}
Let $\sigma$ be a parabolic holomorphic $\mathbb{P}^1$ structure on a hyperbolic 
 Riemann surface $X$ of finite type. Let as above  
$\chi(\sigma)$,   $\delta(\sigma)$, and $\deg(\sigma)$   respectively denote the Lyapunov exponent, the unnormalized degree and the 
degree of $\sigma$. Then  the following formula  holds:
 \begin{equation}\label{eq:formula}
 \displaystyle \chi (\sigma) = \frac{1}{2} + 2\pi\delta(\sigma)    = \frac12+ \frac{\deg(\sigma)}{\abs{\mathrm{eu(X)}}}.
 \end{equation}
 \end{theo}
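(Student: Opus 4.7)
The second equality in~\eqref{eq:formula} is immediate from Gauss--Bonnet, since $\mathrm{vol}(X) = 2\pi|\mathrm{eu}(X)|$ for the hyperbolic metric combined with the normalization $\deg(\sigma) = \mathrm{vol}(X)\delta(\sigma)$. The real content is therefore the relation $\chi(\sigma) = \tfrac12 + 2\pi\delta(\sigma)$.

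My strategy is to introduce on $\widetilde X$ the potential $\psi(x) = \log|\dev'(x)|_{g_h \to g_{FS}}$, where $g_{FS}$ is the Fubini--Study metric on $\pu$ normalized to have curvature $4$ and total area $\pi$. Setting $J = e^{2\psi}$ for the associated Jacobian, the conformal identity $\dev^* g_{FS} = e^{2\psi} g_h$ and the Gauss curvature formula (with $K_h=-1$, $K_{FS}=4$) give the Liouville-type equation
$$\Delta_h \psi = -1 - 4J$$
pointwise on $\widetilde X$. Applying Dynkin's formula along hyperbolic Brownian motion $X_t$ (generator $\Delta_h$) issued from $x_0 \in \widetilde X$ yields
$$\ee_{x_0}[\psi(X_t)] - \psi(x_0) = -t - 4\int_0^t \ee_{x_0}[J(X_s)]\,ds.$$

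I then plan to divide by $t$, let $t\to\infty$, and identify both sides. For the LHS, writing $X_t = \gamma_t y_t$ with $y_t$ in a fundamental domain and using the equivariance $\psi(\gamma x) = \psi(x) + \log|\hol(\gamma)'(\dev x)|_{FS}$ together with the classical identity $|A'(\pi(v))|_{FS} = \|v\|^2/\|A v\|^2$ for $A\in\PSL$,
$$\psi(X_t) = \psi(y_t) - 2\log\frac{\|\hol(\gamma_t) v_t\|}{\|v_t\|},$$
where $v_t\in\cc^2\setminus 0$ represents $\dev y_t$. Since $\psi(y_t)$ is uniformly bounded (on the fundamental domain away from cusps) and $v_t$ remains in a compact set, the formula defining the Brownian Lyapunov exponent in~\cite{Bers1} gives $\ee_{x_0}[\psi(X_t)]/t \to -2\chi$. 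For the RHS, the change of variables $\int_{B(x,R)} J\,dA_h = \int_{\pu}\#(\dev^{-1}(z) \cap B(x,R))\, d\omega_{FS}(z)$ together with the uniform-in-$z$ limit from Definition-Proposition~\ref{defprop:degree} and $\omega_{FS}(\pu) = \pi$ yields the spatial average $\mathrm{vol}(B(x,R))^{-1} \int_{B(x,R)} J\, dA_h \to \pi\delta$. Combined with standard long-time asymptotics of the hyperbolic heat kernel $p_s(x_0, \cdot)$, this should give $\ee_{x_0}[J(X_s)] \to \pi\delta$ as $s\to\infty$, and hence the Ces\`aro average has the same limit. Equating the two sides of Dynkin's identity produces $-2\chi = -1 - 4\pi\delta$, which is the desired formula.

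The hardest step is this last one: passing from the spatial ball-averaging that defines $\delta(\sigma)$ to the heat-kernel averaging $\ee_{x_0}[J(X_s)]$ for the function $J$, which is \emph{not} $\pi_1$-invariant. This seems to require the equidistribution machinery of Bonatti--G\'omez-Mont~\cite{bgm} already underpinning Definition-Proposition~\ref{defprop:degree}. An equivalent and arguably more geometric route is to integrate Liouville's equation directly over $B(x_0, R)$: Stokes' theorem converts $\int_{B_R}\Delta_h\psi\, dA_h$ into a boundary integral whose $R\to\infty$ average reduces to the radial asymptotics $\psi(r,\theta)/r \to -2\chi$ valid for Lebesgue-a.e.\ direction $\theta \in \partial\widetilde X$, giving the same identity.
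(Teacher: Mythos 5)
Your strategy (the Liouville equation $\Delta_h\psi=-1-4J$ for the conformal factor of $\dev^*g_{FS}$, fed into Dynkin's formula along hyperbolic Brownian motion) is genuinely different from the paper's proof, which never looks at the conformal factor of the developing map and instead expresses both $\chi$ and $\delta$ as cohomological pairings of the harmonic current with $N_{\overline{\mathcal F}}$ and $[\overline s]$ on the compactified suspension, the delicate part there being the index computation at the cusps. Parts of your plan are sound: the Liouville identity and the Dynkin step are fine (with the minor caveats that $\psi(y_t)$ is unbounded in the cusps, only volume-integrable, and that integrability of $\psi$ and $J$ against the heat kernel must be checked via the linear/exponential growth bounds coming from $\log\norm{\hol(\gamma)}\le C\,d(o,\gamma o)$); and the right-hand side $\frac1t\int_0^t\ee_{x_0}[J(X_s)]\,ds\to\pi\delta$ can indeed be obtained from Definition-Proposition \ref{defprop:degree}, because the hyperbolic heat kernel is a decreasing function of distance, so $p_s(x_0,\cdot)$ is a superposition of normalized uniform measures on balls $B(x_0,R)$, the ball averages of $J$ converge to $\pi\delta$ uniformly in $z$ by Definition-Proposition \ref{defprop:degree}, and transience kills the contribution of bounded radii.

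The genuine gap is the left-hand side, i.e.\ the claim $\ee_{x_0}[\psi(X_t)]/t\to-2\chi$. From $\psi(X_t)-\psi(y_t)=-2\log\bigl(\norm{\hol(\gamma_t)v_t}/\norm{v_t}\bigr)$ and the definition of $\chi$, which controls only the matrix norm $\norm{\hol(\gamma_t)}$, you get for free just the bound $\log\norm{\hol(\gamma_t)v_t}-\log\norm{v_t}\le\log\norm{\hol(\gamma_t)}$, hence $\liminf\ee[\psi(X_t)]/t\ge-2\chi$ and therefore only the inequality $\chi\ge\frac12+2\pi\delta$. The reverse inequality requires showing that the direction $v_t$ --- a lift of $\dev(y_t)$, which moves with $t$ --- is not asymptotically aligned with the contracting Oseledets subspace $E(\omega)$ of the holonomy cocycle; "$v_t$ remains in a compact set" is irrelevant here, since the unit sphere is compact anyway and the danger is exponential alignment with $E(\omega)$. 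This non-degeneracy is precisely what Lemma \ref{l:technical step} and Proposition \ref{p:comparison exponents} establish, using Oseledets' theorem, the atomlessness of the harmonic measures and the unique ergodicity of the foliated harmonic measure (Proposition \ref{p:ergodic}); and even those statements concern the derivative at a fixed point of the initial fiber, whereas your evaluation point $\dev(y_t)$ varies with $t$, so an additional argument is needed. The alternative "radial" route in your last paragraph hides the same issue: the asymptotics $\psi(r,\theta)/r\to-2\chi$ for a.e.\ $\theta$ is an equivalent unproved statement (again only the lower bound is immediate), and the Stokes boundary term needs uniform-integrability control. As written, the argument yields one inequality of \eqref{eq:formula}; to close it you must import a Furstenberg/Oseledets-type input of the kind the paper supplies through its foliated ergodic theory.
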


Surprisingly enough, the proof is based on the ergodic theory of holomorphic foliations.
 Indeed,
 to any representation $\rho: \pi_1(X)\cv \PSL$ one  classically associates its {\em suspension}, a
  flat   $\mathbb P^1$-bundle $M_\sigma \rightarrow X$ whose monodromy is $\rho$. 
In more concrete terms, it is the quotient of $\widetilde X\times \pu$ under the diagonal action of $\pi_1(X)$. The horizontal foliation of $
\widetilde X\times \pu$ descends to a holomorphic foliation on $M_\sigma$  transverse to the $\pu$ fibers, with monodromy $\rho$. This  ``dictionary" between $\pu$-structures and transverse sections of flat $\pu$-bundles, 
  was investigated e.g. in \cite{loray marin}.

For $\rho = \mathsf{hol}_\sigma$, we 
analyze this foliation from the point of view of Garnett's theory of foliated harmonic measures and currents.  
This interplay   was already explored by Bonatti and Gomez-M\'ont \cite{bgm} and Alvarez \cite{alvarez}. 
The key of the proof of the theorem  is to interpret $\chi$ and $\delta$ as cohomological quantities on 
$M_{\sigma}$. The idea that foliated Lyapunov exponents can be computed in cohomology  
stems from  the first author's thesis   (see \cite[Appendice]{deroin levi plate}). 

When $X$  is not compact, to prove the result we  compactify both $M$ and the 
foliation.  The computations then become much more delicate because the compactified foliation is singular. The details are carried out in 
Sections \ref{s:degree}, \ref{sec:lyapunov} and \ref{sec:proof}. 

%The coefficient $\frac14$, which is the Lyapunov exponent of the canonical projective structure $\sigma_{\rm Fuchs}$, can also be 
%viewed as half the entropy of Brownian motion in the hyperbolic plane, so that the formula reads \note{formule a commenter....} $$ \chi 
%(\sigma) =   \frac12\lrpar{h_{\rm Brown} + \frac{\deg(\sigma)}
%{\abs{\mathrm{eu(X)}} }}.$$ 

\medskip

Theorem \ref{theo:formula} is mostly interesting for the purpose  of studying the space of projective structures on $X$.  Let $P(X)$ be the 
space of parabolic projective structures on $X$ which are compatible with the complex structure. It is well known that $P(X)$ is naturally 
isomorphic to an affine space of quadratic differentials on $X$ of dimension $3g-3+n$ (see \S \ref{subs:Teichmuller} 
below for more details). The Bers simultaneous uniformization theorem implies that the Teichm\"uller space of marked conformal structures 
on $X$  embeds as a bounded open subset $B(X)\subset P(X)$ (the {\em Bers slice}), whose geometry has been extensively studied. 
%The holonomy map $\sigma \mapsto \mathsf{hol}_\sigma$ embeds $P(X)$ as a smooth complex 
%submanifold of  the character variety of parabolic representations of $\pi_1(X)$ into $\PSL$. 
%\note{bonne ref pour �a dans le cas non compact? je n'en connais pas. il faut adapter les arguments, c'est une des difficult\'es qu'on avait rencontr\'e dans CDFG}

The Sullivan dictionary is a very fruitful set of analogies between the dynamics of rational transformations on $\pu$ and the theory of 
Kleinian (and more generally M\"obius) groups. 
In \cite{mcm renormalization}, McMullen draws a fundamental parallel between the Bers slice  $B(X)$ and the Mandelbrot set. 
We take one step further here by relating  $P(X)$ and  the space of polynomials. 
%In this respect, 
%the space $P(X)$ of projective structures can be viewed as the analogue of the space of polynomials, 
%with the Bers slice $B(X)$ corresponding to the connectedness locus (resp. the Mandelbrot set)
 %(see McMullen  \cite{mcmullen:ICM}\note{pas clair quelle est la bonne ref. R. Il me semble que celle ci est ok. B}) and 
In this respect,  Theorem 
 \ref{theo:formula}  should be understood as the analogue of the familiar Manning-Przytycki 
 formula \cite{manning, prz} for the Lyapunov
  exponent of the maximal entropy measure of a polynomial. This analogy should be used  as a guide for the forthcoming results. 

It was shown in \cite{Bers1} that $\sigma\mapsto \chi(\sigma)$ is a continuous (H\"older) plurisubharmonic (psh for short) 
function on $P(X)$, hence it follows from Theorem \ref{theo:formula} that $\deg$ is continuous and  psh, too. 
In addition  we see that $\chi(\sigma)$ reaches its minimal value $\frac12$ exactly when $\deg(\sigma)=0$. 
%Projective structures with vanishing degree have been known as {\em covering projective structures}, 
%and include in particular $\overline{B(X)}$. 
As already observed, $\deg =0$ on $\overline{B(X)}$, so in particular $\chi = \frac12$ there.

\medskip

%Our second result concerns the problem of bounding the dimensions of harmonic measures of branched projective structures. 
%Unfortunately, we do not provide interesting bounds for the Hausdorff dimension, but instead we can control the so-called \textit
%{essential dimension}. Let us recall its definition. Let $(X,d,m)$ be a compact metric space equipped with a probability measure $m$. 
%For every $\varepsilon ,\rho >0$ let $N(\varepsilon , \rho)$ be the minimal number of elements of a set $E\subset X$ such that the 
%union of the balls of radius $\rho $ centered at points of $E$ cover a subset of $\nu$-measure $\geq 1 -\varepsilon$. We define $\text
%{dimsup} _\varepsilon (X,d,\nu) := \limsup _{\rho \rightarrow 0 } \frac{\log N(\varepsilon , \rho ) }{\log (\frac{1}{\rho})}$  and 
%$$ \text{dimsup} (X,d,\nu) := \lim _{\varepsilon \rightarrow 0}  \text{dimsup} _\varepsilon (X,d,\nu).$$
%
%\begin{theo}\label{theo:dimension}
%The essential dimension of the harmonic measure of a branched projective structure is bounded by $\frac{1}{4\chi}$. In particular, in
%the case there is no branched point, this essential dimension is bounded by $1$, and equality occurs iff the developing maps are 
%injective, namely the projective structure lies in the closure of Teichm\"uller space. 
%\end{theo}

A first result which parallels exactly the dynamics of polynomials  concerns the Hausdorff dimension of the harmonic measures. 

\begin{theo}\label{theo:dimension}
Let $X$ be a hyperbolic 
Riemann surface of finite type and $\sigma$ be a parabolic projective structure on $X$. Let as above $\chi$ be its Lyapunov 
exponent and  $(\nu_x)_{x\in \widetilde X}$ be the associated family of harmonic measures. 
Then for every $x$, $$\dim_H(\nu_x) \leq \frac{1}{2\chi}\leq 1.$$ Furthermore $\dim_H(\nu_x) = 1$ if and only if 
$\sigma$ belongs to the closure of the Bers slice ${B(X)}$. 
\end{theo}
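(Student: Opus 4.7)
My plan splits into the upper bound and the characterization of the equality case.

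\textbf{Upper bound.} The strategy is to exploit the Furstenberg description $\nu_x = \theta_* m_\tau$ of Definition-Proposition~\ref{defprop:harmonic measure}, where $m_\tau$ is the harmonic measure on $\partial \hh$ from $\tau$, of Hausdorff dimension $1$. I aim to show that $\theta$ is ``locally H\"older of exponent $2\chi$'' in a measure-theoretic sense. The key is a scale comparison along Brownian paths: let $\gamma_t$ denote the closing loop at time $t$ as in Definition-Proposition~\ref{def:lyap}. The definition of $\chi$ applied to $\hol$ and, separately, to the uniformizing Fuchsian representation (whose Lyapunov exponent equals $\frac12$, corresponding to $\delta = 0$ in Theorem~\ref{theo:formula}) yields, along $W_\star$-a.e.\ Brownian path, the asymptotics $\|\gamma_t\|^2 \sim e^{t}$ and $\|\hol(\gamma_t)\|^2 \sim e^{2\chi t}$. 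Combined with the elementary fact that the spherical derivative of a M\"obius transformation $A \in \PSL$ at a generic point has order $\|A\|^{-2}$, this shows that the shadow on $\partial \hh$ of a unit hyperbolic ball around $\omega(t)$, of Euclidean radius $\sim e^{-t}$ around $\xi_\infty = \lim \omega(t)$, has $\theta$-image on $\pu$ of spherical diameter $\sim e^{-2\chi t}$ around $\mathrm{e}(\omega) = \theta(\xi_\infty)$. Setting $r = e^{-t}$ gives $\theta(B(\xi_\infty, r)) \subset B(\mathrm{e}(\omega), C r^{2\chi})$ along a sequence of scales, and a mass-distribution / H\"older-pushforward argument yields $\dim_H(\nu_x) \leq 1/(2\chi)$. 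The second inequality $1/(2\chi) \leq 1$ is then immediate from Theorem~\ref{theo:formula} since $\delta(\sigma) \geq 0$. Abstractly, this is an instance of the Furstenberg-Ledrappier-Guivarc'h upper bound for $\mu$-stationary measures on $\pu$, applied to the Furstenberg discretization of Brownian motion.

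\textbf{Equality case.} For the ``if'' direction, I would use that for every $\sigma \in \overline{B(X)}$ the developing map identifies $\widetilde X$ with a simply connected $\hol(\pi_1(X))$-invariant component $\Omega \subset \pu$ (a quasi-disk on $B(X)$, and, by Bers boundary theory, still a simply connected invariant component on $\partial B(X)$). Via this identification $\nu_x$ coincides with the classical harmonic measure of $\Omega$ as seen from $\dev(x)$, and Makarov's theorem then gives $\dim_H(\nu_x) = 1$; combined with the already-proved upper bound $\dim_H(\nu_x) \leq 1$ this yields equality. For the ``only if'' direction, suppose $\dim_H(\nu_x) = 1$. The upper bound forces $\chi(\sigma) = \frac12$, hence $\delta(\sigma) = 0$ by Theorem~\ref{theo:formula}, so $\sigma$ is a covering projective structure by Proposition~\ref{p:vanishing}. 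The covering projective structures on $X$ with simply connected invariant component $\Omega$ are exactly the members of $\overline{B(X)}$ by the Bers density theorem, so to conclude it suffices to exclude covering projective structures with multiply connected $\Omega$ (``Schottky type''), for which a separate argument -- comparing $\nu_x$ with Patterson-Sullivan theory on the Kleinian limit set -- gives a strict dimension drop $\dim_H(\nu_x) < 1$.

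\textbf{Main obstacle.} The most delicate step is the quantitative verification of the local H\"older estimate on $\theta$. While the matrix-norm heuristic above is clean, the actual argument must control boundary effects in the shadow construction, average out the fluctuations of $|D\dev|$ along a generic Brownian path, and absorb the geodesic-closure error from the definition of $\gamma_t$ into the logarithmic leading order. In the non-compact case, one must additionally control the proportion of time the Brownian motion spends near the cusps so that the generic Lyapunov asymptotics remain valid. A secondary subtlety is the characterization of $\overline{B(X)}$ via Bers density together with the strict dimension drop at Schottky-type covering projective structures, both deep inputs from Kleinian group theory.
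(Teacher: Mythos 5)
Your upper bound follows the same Ledrappier-type strategy as the paper, but run through the Furstenberg map: you push the circle harmonic measure $m_\tau$ forward under $\theta$, combining the shadow estimate $m_\tau(\text{shadow of } B(\omega(t),1))\gtrsim e^{-t}$ with the Oseledets-type contraction $\norm{D\hol(\gamma_t)}\asymp \norm{\hol(\gamma_t)}^{-2}\asymp e^{-2\chi t}$ away from the repelling direction. The paper instead works on the suspension: it uses the Oseledets lemma for the holonomy cocycle (Lemma \ref{l:technical step}, with the exceptional point $r(\omega)$ distributed like $\nu_x$, hence harmless), together with a second cocycle $L_t(\omega)=-\log \frac{(h_t)^{-1}\nu_{\omega(t)}}{\nu_{\omega(0)}}(\omega(0))$ over the foliated Brownian motion whose a.s.\ average is the \emph{action} $A=-\int \Delta_{\mathcal F}\log\varphi\, d\mu\in[0,1]$, plus Jensen's inequality. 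Your route, granting the measurability/boundary issues you flag (which are exactly where the paper invokes Oseledets, the non-atomicity of $\nu_x$ and the Markov property), would give $\dim_H(\nu_x)\le \frac{1}{2\chi}$; but because you replace the measure-transport estimate $\nu_{\omega(t)}(h_t(C))\ge e^{-(A+\eta)t}$ by the crude bound coming from the circle, you irrevocably lose the refined inequality $\dim_H(\nu)\le \frac{A}{2\chi}$.

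That loss creates a genuine gap in the ``only if'' half of the equality case. Your reduction $\dim_H(\nu)=1\Rightarrow \chi=\tfrac12\Rightarrow\delta=0\Rightarrow\sigma$ of covering type is fine, as is the ``if'' direction via Makarov and the identification of $\nu_x$ with classical harmonic measure (Lemma \ref{lem:zero degree}), and both you and the paper need the density theorem to identify injective-developing-map structures with $\overline{B(X)}$. But you must still exclude covering structures with non-injective developing map (multiply connected image, $\ker(\hol)\neq 1$), and with only $\dim_H(\nu)\le\frac{1}{2\chi}$ and $\chi=\tfrac12$ you have no information there. Your proposed substitute -- ``comparing $\nu_x$ with Patterson--Sullivan theory on the limit set'' -- is not a valid mechanism: Patterson--Sullivan measure is a different measure class, and no comparison with it controls the Hausdorff dimension of harmonic measure; the strict drop $\dim_H(\nu)<1$ for non-simply-connected components is itself a nontrivial theorem (indeed the paper advertises it as a consequence of its method). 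The paper closes this step precisely via the action: when $\hol$ is not injective, the leafwise density $\varphi$ is invariant under the nontrivial normal subgroup $\ker(\hol)$, whose limit set is all of $\partial\widetilde X$, so $\varphi$ cannot be an extremal positive harmonic function; the equality case of Harnack/Schwarz--Pick then forces $\norm{\nabla_{\mathcal F}\log\varphi}<1$ a.e., hence $A<1$ and $\dim_H(\nu)\le \frac{A}{2\chi}<1$ (while $A=1$ when $\dev$ is injective, by measurable conjugacy with the Fuchsian suspension whose densities are Poisson kernels). To repair your argument you would either have to import such a strict-inequality theorem from elsewhere or upgrade your upper bound to carry the extra invariant $A$, which effectively means redoing the paper's cocycle argument.
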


Notice that since the measures $\nu_x$ are mutually absolutely continuous,  $\dim_H(\nu_x)$ is independent of $x$, so abusing notation, we often simply denote it as $\dim_H(\nu)$. 
The proof is an adaptation of    Ledrappier  \cite[Thm 1]{ledrappier}.

So, as in the polynomial case,   Theorem \ref{theo:dimension} provides  an alternate approach 
to the classical bound $\dim_H(\nu)\leq 1$ for  the harmonic measure on boundary of discontinuity components of finitely generated Kleinian groups, 
which follows from the  famous results of Makarov \cite{makarov} and  Jones-Wolff \cite{jones wolff}. 
In addition, with this method we are also able to show  that $\dim_H(\nu)< 1$ when the component is not simply connected.
Indeed we have the more precise bound $\dim_H(\nu)\leq \frac{A}{2\chi}$, where $0\leq A\leq 1$ 
is an invariant of the flat foliation, and $A< 1$ when $\hol$ is not injective. 
%\note{assertion � v�rifier. oui c'est bien ca peut etre c comme ca qu'il faudrait enoncer l'actuel lemme 5.5}
% (see Remark \ref{rmk:makarov}).  
 
 We also see that the value of the  dimension  of the harmonic measures detects exotic quasifuchsian structures, that is, 
projective structures with quasifuchsian holonomy which do not belong to the Bers slice. 
%{\bf remarque � d�velopper: we get a more precise bound, namely $\text{dimsup} (\nu) \leq \frac{A} {\chi}$, where $A$ is Frankel's action/Kaimanovich's entropy/... je ne sais pas trop quoi dire}

\medskip

As a third application of Theorem \ref{theo:formula}, we recover a result due to Shiga \cite{shiga}.

 \begin{theo} \label{theo:convex}
 Let $X$ be a hyperbolic Riemann surface of finite type (of genus $g$ with $n$ punctures). 
 The closure of the Bers embedding  $B(X)$  
 is a polynomially convex compact subset of the space $P(X)\simeq \cc^{3g-3+n}$ of holomorphic projective structures on $X$. 
 As a consequence, $B(X)$ is a polynomially convex (or Runge) domain.
 \end{theo}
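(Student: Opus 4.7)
The plan is to use the Lyapunov exponent, interpreted through Theorem \ref{theo:formula}, as a plurisubharmonic defining function for $\overline{B(X)}$, and then to conclude by a pluripotential characterization of polynomial convexity.

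First, I would fix the identification $P(X)\simeq \cc^{3g-3+n}$ as an affine space of holomorphic quadratic differentials on $X$ based at the reference structure $\sigma_{\rm Fuchs}$. A classical bound of Bers on the Schwarzian derivative of univalent maps shows that $B(X)$ is bounded, so $\overline{B(X)}$ is compact. Combining Theorem \ref{theo:formula} with the continuity and plurisubharmonicity of $\chi$ established in~\cite{Bers1}, the function
\[
u(\sigma):=\chi(\sigma)-\frac12=\frac{\deg(\sigma)}{\abs{\mathrm{eu(X)}}}
\]
is continuous, nonnegative, and plurisubharmonic on $P(X)$, and $u$ vanishes on $\overline{B(X)}$ since $\deg=0$ there.

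Next, I would identify the zero locus $\{u=0\}$ with $\overline{B(X)}$. By Proposition \ref{p:vanishing} this set is exactly the locus of covering projective structures on $X$. Any such structure comes from a Kleinian surface group $\Gamma\simeq\pi_1(X)$ uniformizing $X$ on one of its invariant components, and the Bers density theorem (conjectured by Bers and established in full generality by Brock--Canary--Minsky and others) asserts that every such $\Gamma$ is an algebraic limit of quasi-Fuchsian groups with bottom surface $X$. This yields $\{u=0\}\subset\overline{B(X)}$, and the reverse inclusion follows from continuity of $u$. Hence $\{u=0\}=\overline{B(X)}$.

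Polynomial convexity now reduces to a standard Siciak-type criterion: if $v$ is plurisubharmonic on $\cc^N$ of Lelong class $\mathcal L(\cc^N)=\{w\ \text{psh}\!:\ w(z)\leq \log(1+\norm{z})+C_w\}$ and $v\leq 0$ on a compact set $K$, then the polynomial hull $\widehat K$ is contained in $\{v\leq 0\}$, so if moreover $\{v\leq 0\}=K$ then $K=\widehat K$. The main obstacle is therefore the growth estimate $\chi(\sigma)\leq \log(1+\norm{q_\sigma})+C$ on $P(X)$ needed to place $u$ in $\mathcal L$. The natural approach is to revisit \eqref{eq:deflyap}: for every $\gamma\in\pi_1(X)$, $\hol_\sigma(\gamma)$ is the monodromy along $\gamma$ of the Schwarzian ODE $f''+\frac12 q_\sigma f=0$ and therefore satisfies a polynomial bound $\log^+\norm{\hol_\sigma(\gamma)}\leq C_\gamma(1+\log^+\norm{q_\sigma})$; combined with the almost-sure convergence in \eqref{eq:deflyap} and integrability of the loop-closing geodesic length under the Wiener measure, this should produce the required control on $\chi$. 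Granting this estimate, the theorem follows, and the Runge property of $B(X)$ is a standard consequence of the polynomial convexity of its closure together with the fact that $B(X)$ is Stein (being biholomorphic, via the Bers embedding, to Teichm\"uller space).
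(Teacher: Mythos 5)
The central step of your argument---the identification $\{u=0\}=\overline{B(X)}$---is false, and this is where the proof breaks. By Theorem \ref{theo:formula} and Proposition \ref{p:vanishing}, $\{u=0\}=\{\deg=0\}$ is exactly the set of \emph{covering} projective structures, and this set is strictly larger than $\overline{B(X)}$: it contains every parabolic structure whose developing map is a non-injective covering of a non-simply connected invariant component, for instance (when $X$ is compact) the structure coming from a Schottky uniformization of $X$, whose holonomy is a free group and which is even an \emph{isolated} point of $\{\deg=0\}$ by Hejhal's theorem \cite{hejhal schottky} (this is exactly what is exploited in the proof of Theorem \ref{theo:tbif}). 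The density theorem characterizes $\overline{B(X)}$ as $\{\deg=0 \text{ and } \hol \text{ faithful}\}$; your appeal to it overlooks the non-faithful covering structures, which are limits of nothing in the Bers slice. Consequently your hull argument can only give $\widehat{\overline{B(X)}}\subset\{\deg=0\}$, which does not suffice. The paper bridges precisely this gap: $\{\deg=0\}$ is compact by Kra--Maskit \cite{kra maskit} and polynomially convex (being the minimum locus of the global psh function $\deg$), and then $\overline{B(X)}$ is shown to be a \emph{connected component} of $\{\deg=0\}$, via Lemma \ref{lem:connex} (countability of the possible kernels, the Chuckrow--Jorgensen theorem and Sierpi\'nski's theorem imply that $\ker(\hol)$ is constant on a compact connected set of discrete non-elementary representations, so every structure in the component containing $\overline{B(X)}$ has faithful holonomy and, by the density theorem, lies in $\overline{B(X)}$); finally a connected component of a polynomially convex compact set is polynomially convex \cite{stout}.

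A secondary problem is the growth estimate you present as the main obstacle. The bound $\log^+\norm{\hol_\sigma(\gamma)}\leq C_\gamma(1+\log^+\norm{q_\sigma})$ is not available: Gronwall applied to the linear system attached to $f''+\tfrac12 q_\sigma f=0$ only yields a bound exponential in $\norm{q_\sigma}$, and along rays $t\mapsto tq_0$ the monodromy actually grows exponentially in a positive power of $t$, so $\chi-\tfrac12$ is not in the Lelong class. Fortunately no growth restriction is needed: for a compact $K\subset\cc^N$ the polynomial hull coincides with the hull relative to \emph{all} psh functions on $\cc^N$ (\cite[Thm 1.3.11]{stout}, the very fact the paper invokes), so a nonnegative global psh function vanishing exactly on a compact set already makes that set polynomially convex. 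With this repair your scheme does prove the polynomial convexity of $\{\deg=0\}$, but identifying $\overline{B(X)}$ inside it still requires the faithfulness/connected-component argument above. Also, your closing deduction of the Runge property from Steinness of $B(X)$ is not the right reason; what is used is that a component of the interior of a polynomially convex compact set is Runge, together with $\Int\{\deg=0\}=B(X)$ (Shiga--Tanigawa, Matsuzaki).
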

 
Recall that   a  compact set $K$ in $\cc^N$ is polynomially convex if $\widehat K = K$, where 
$$\widehat{K} = \set{z\in \cc^N, \ \abs{P(z)}\leq \sup_K \abs{P} \text{ for every  polynomial }P}.$$
An open set $U\subset \cc^N$ is said to be  polynomially convex (or Runge) if for every $K\Subset U$, 
$\widehat K\subset U$. 
The theorem  may be reformulated 
 by saying that $\overline{B(X)}$ is defined by countably many polynomial inequalities of the form 
$\abs{P}\leq 1$. This is not an intrinsic property of Teichm\"uller space, but rather a property of its embedding 
into the space $P(X)$ of holomorphic projective structures on $X$
 (as opposed to the Bers-Ehrenpreis  theorem that Teichm\"uller  spaces are holomorphically convex). 
 
 Shiga's proof is  based on the Grunsky inequality on univalent functions. 
 Only the polynomial convexity of $B(X)$ was asserted in \cite{shiga}, but the  proof 
covers the case of $\overline{B(X)}$ as well.  Our approach is based on the 
elementary fact that the minimum locus   of a global psh function 
on $\cc^N$ is polynomially convex. 

\medskip
 
 In \cite{Bers1} we showed that $\tbif:= dd^c\chi$ is a {\em bifurcation current}, in the sense that its support is 
 precisely the set of projective structures whose holonomy representation is not locally structurally stable in $P(X)$. 
 Equivalently, the complement $\supp(\tbif)^c$  is 
  the interior of the set of projective structures with discrete holonomy $P_{D}(X)$.  
  A theorem due to 
  Shiga and Tanigawa \cite{shiga tanigawa} and Matsuzaki \cite{matsuzaki} asserts that 
  $\mathrm{Int} (P_D(X)) = P_{QF}(X)$, the set of projective structures with quasifuchsian holonomy, so we conclude that 
  $\supp(\tbif) = (P_{QF}(X))^c$. 
 
Analogous bifurcation currents have been defined for families of rational mappings on $\pu$. It turns out that the exterior powers $\tbif^k$ 
are interesting and rather well understood objects   in that context (see \cite{survey} for an account).
 In particular, in the space of polynomials of degree $d$, the maximal exterior power $\tbif^{d-1}$ is a positive 
measure supported on the boundary of the connectedness locus,  which is the right
analogue in higher degree  of the harmonic measure of the Mandelbrot set \cite{preper}. 

For bifurcation currents associated to spaces of representations, nothing is known in general about the exterior powers $\tbif^k$. 
In our situation, we are able to obtain some information.

\begin{theo}\label{theo:tbif}
Let $X$ be  a compact   Riemann surface of genus $g\geq 2$. 
Let $T_{\mathrm{bif}} = dd^c\chi$ be the natural bifurcation current on $P(X)$. Then $\fr B(X)$ is contained in $\supp(\tbif^{3g-3})$. 
\end{theo}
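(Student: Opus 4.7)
The statement is the natural analogue for $P(X)$ of the fact that in the moduli space of polynomials, the maximal wedge power of the bifurcation current is supported on the boundary of the connectedness locus~\cite{preper}. I would follow the strategy of producing a divisorial lower bound on $\tbif$ near a dense subset of $\partial B(X)$ and then multiplying $3g-3$ independent such bounds together.

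The first step is a reduction. Since $\supp(\tbif^{3g-3})$ is closed, it suffices to prove the statement at a dense subset of $\partial B(X)$. A convenient choice is the set of \emph{maximally cusped} boundary points: those $p\in\partial B(X)$ for which some pants decomposition $\{\gamma_1,\ldots,\gamma_{3g-3}\}$ of $X$ satisfies that $\hol_p(\gamma_i)$ is parabolic for every $i$. These are classically known to be dense in $\partial B(X)$ (McMullen, Keen--Series, Canary--Culler--Hersonsky--Shalen, and ultimately via Bromberg's resolution of the Bers density conjecture).

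At such a point $p$, introduce the $3g-3$ holomorphic functions $h_i(\sigma):=\tr^2(\hol_\sigma(\gamma_i))-4$ on $P(X)\simeq \cc^{3g-3}$. By complex Fenchel--Nielsen theory they form a system of local holomorphic coordinates near~$p$, so the cusp divisors $D_i:=\{h_i=0\}$ intersect transversely at $p$ and $[D_1]\wedge\cdots\wedge[D_{3g-3}]$ carries positive mass at $p$. The crux is then to prove, for each $i$ and locally near $p$, an inequality of closed positive currents
\[
\tbif \;\geq\; c_i\,[D_i] \qquad\text{for some }c_i>0,
\]
after which Demailly's monotonicity for Monge--Amp\`ere operators of bounded psh functions yields $\tbif^{3g-3} \ge c\,[D_1]\wedge\cdots\wedge[D_{3g-3}]$, a positive measure with atom at $p$. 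This forces $p\in\supp(\tbif^{3g-3})$ and, by density of maximally cusped points, $\partial B(X)\subset\supp(\tbif^{3g-3})$.

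The main obstacle is the divisorial inequality $\tbif\ge c_i\,[D_i]$. Heuristically, moving transversally across $D_i$ corresponds to pinching/unpinching the loop $\gamma_i$ on the universal cover, which discretely modifies the topology of $\dev_\sigma$ and should induce a positive jump of the covering density $\delta(\sigma)$. Via Theorem~\ref{theo:formula} and the Nevanlinna asymptotics~\eqref{eq:nevanlinna characteristic}, this ought to manifest as a $\log|h_i|$-singularity of $\chi$ transverse to $D_i$, which is precisely what the inequality encodes. Turning this heuristic into a proof requires a careful local model of $\dev_\sigma$ in the plumbing regime near a cusp, which is the technical heart of the argument. An alternative route would be to compare $\chi-\tfrac12$ globally with the Siciak--Zaharyuta extremal function of the polynomially convex compact $\overline{B(X)}$ provided by Theorem~\ref{theo:convex}, and to deduce that $\tbif^{3g-3}$ dominates a multiple of its equilibrium measure supported on $\partial B(X)$; this, however, demands a logarithmic growth lower bound for $\chi$ at infinity in $P(X)$, which seems no easier to obtain.
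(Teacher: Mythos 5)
Your reduction to maximally cusped boundary points and the wedge-product scheme are reasonable in spirit, but the step on which everything rests --- the local divisorial inequality $\tbif \geq c_i\,[D_i]$ --- is not merely unproven, it is false. As recalled in the paper, $\chi$ is a continuous (indeed H\"older) psh function on $P(X)$. If one had $dd^c\chi \geq c\,[D_i]$ near $p$ for some $c>0$, then $\chi - c\log\abs{h_i}$ would be psh near $p$, hence $\chi$ would tend to $-\infty$ along $D_i$; equivalently $\tbif$ would have a positive Lelong number along $D_i$, which is impossible for the $dd^c$ of a continuous function. For the same reason your intended conclusion cannot be reached by this route: $\tbif^{3g-3}$ is the Monge--Amp\`ere measure of a locally bounded psh function, and by Bedford--Taylor theory such measures put no mass on pluripolar sets, in particular they have no atoms, so no Dirac mass at $p$ can ever be produced. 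The heuristic that pinching $\gamma_i$ creates a $\log\abs{h_i}$-singularity of $\chi$ transverse to $D_i$ conflates the equidistribution statements (Theorem C of \cite{Bers1}, Corollary \ref{coro:deterministic}), in which the cusp divisors $Z(\gamma)$ approximate $\tbif$ only after normalizing by $1/\length(\gamma)$ and letting the length tend to infinity, with a pointwise lower bound by a fixed divisor, which never holds here. Your alternative route through the extremal function of $\overline{B(X)}$ is likewise only a sketch and, as you yourself note, would require growth estimates on $\chi$ that are not available.

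The missing idea is of a different nature. Since $\chi = \frac12 + 2\pi\delta$ (Theorem \ref{theo:formula}), one has $\tbif = 2\pi\, dd^c\delta$, where $\delta\geq 0$ is a continuous psh function whose zero locus is exactly the set of covering structures (Proposition \ref{p:vanishing}). The paper's argument exploits the comparison principle of Bedford--Taylor \cite{bedford taylor}: an \emph{isolated point of the minimum locus} of a continuous psh function on $\cc^{3g-3}$ necessarily belongs to the support of its maximal Monge--Amp\`ere power $(dd^c\delta)^{3g-3}$ --- a local mass estimate that requires no divisorial lower bound. It then invokes Hejhal's theorem that covering structures with Schottky holonomy are isolated points of $\set{\delta=0}$, so each of them lies in $\supp(\tbif^{3g-3})$, and Otal's theorem (see also Ito) that, for $X$ compact, such structures accumulate on every point of $\fr B(X)$; since the support is closed, $\fr B(X)\subset \supp(\tbif^{3g-3})$. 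If you wish to salvage your approach, it is this kind of substitute for the divisorial inequality that you should be looking for, rather than any atom of $\tbif^{3g-3}$ at cusped points.
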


Notice that $3g-3$ is the maximum possible exponent. 
It is likely that the support of $\tbif^{3g-3}$ is much larger than $\fr B(X)$. 
The reason for the compactness assumption   here is that the proof   is based on results of Otal \cite{otal} and 
Hejhal \cite{hejhal schottky} that are known to hold only when $X$ is compact. 

\medskip 

If $\gamma$ is a geodesic on $X$, we let $Z(\gamma)$ be the subvariety of $P(X)$ defined by the property that 
$\tr^2(\mathsf{hol}(\gamma)) = 4$ (i.e.   $\mathsf{hol}(\gamma)$ is parabolic or the identity).
As a consequence of Theorem \ref{theo:tbif} and of the equidistribution theorems of
 \cite{Bers1} we obtain the following result, which 
 contrasts with the description of $\fr B(X)$ ``from the inside"  in terms of maximal cusps and 
 ending laminations (\cite{minsky, bcm}, see also \cite{lecuire} for a nice account).

\begin{coro}\label{coro:deterministic}
For every $\e>0$ there exist $3g-3$ closed geodesics $\gamma_1, \ldots , \gamma_{3g-3}$ on $X$ such that $\fr B(X)$ is contained in the $\e$-neighborhood of $Z(\gamma_1)\cap \cdots \cap Z(\gamma_{3g-3})$. 
\end{coro}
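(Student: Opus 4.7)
The plan is to combine Theorem~\ref{theo:tbif} with the equidistribution results established in \cite{Bers1} that identify $\tbif$ as a weak limit of normalized currents of integration on the hypersurfaces $Z(\gamma)$. Writing $N:=3g-3$ for the complex dimension of $P(X)$, the argument has three steps: wedging $N$ such approximations, invoking Theorem~\ref{theo:tbif} to ensure that $\fr B(X)$ sits inside the support of $\tbif^{N}$, and concluding by a covering argument.

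More concretely, I would begin by recalling from \cite{Bers1} the equidistribution of the hypersurfaces $Z(\gamma)$ towards $\tbif$: for suitable sequences of closed geodesics $\gamma_n$ on $X$ with $\length(\gamma_n)\to\infty$, one has
\[
\frac{1}{\length(\gamma_n)}\,[Z(\gamma_n)]\;\longrightarrow\;c\,\tbif
\]
weakly on $P(X)$ for some universal constant $c>0$. Since $\chi$ is H\"older continuous, the potential of $\tbif$ is a continuous psh function, so by Bedford--Taylor / Demailly theory the wedge power $\tbif^{N}$ is well-defined and behaves continuously under such approximations. Choosing $N$ sequences of geodesics $(\gamma_j^{(n)})_n$, $j=1,\ldots,N$, one generic with respect to the other, I would deduce
\[
\frac{1}{\prod_{j=1}^{N}\length(\gamma_j^{(n)})}\,[Z(\gamma_1^{(n)})]\wedge\cdots\wedge [Z(\gamma_N^{(n)})] \;\longrightarrow\;c^{N}\,\tbif^{N}
\]
as positive Borel measures on $P(X)$; the left-hand side is the counting measure of the discrete intersection $Z(\gamma_1^{(n)})\cap\cdots\cap Z(\gamma_N^{(n)})$.

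To conclude, I would apply Theorem~\ref{theo:tbif}: $\fr B(X)\subset \supp(\tbif^{N})$, so every open set $U\subset P(X)$ meeting $\fr B(X)$ satisfies $\tbif^{N}(U)>0$. Given $\e>0$, cover the compact set $\fr B(X)$ by finitely many open balls $U_1,\ldots,U_M$ of radius $\e/2$, each meeting $\fr B(X)$. By the weak convergence above, for all $n$ sufficiently large and every $j\in\{1,\ldots,M\}$, the discrete set $Z(\gamma_1^{(n)})\cap\cdots\cap Z(\gamma_N^{(n)})$ meets $U_j$. Fixing such an $n$ and setting $\gamma_j:=\gamma_j^{(n)}$ provides the required $N$ geodesics: each point of $\fr B(X)$ lies in some $U_j$ and is therefore within $\e$ of $Z(\gamma_1)\cap\cdots\cap Z(\gamma_N)$.

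The main obstacle I expect is the continuity of the wedge product along the approximation $\tfrac{1}{\length(\gamma_n)}[Z(\gamma_n)]\to c\,\tbif$: one must verify that the products on the left are well-defined and that no mass is lost in the limit along subvarieties of positive dimension $Z(\gamma_j^{(n)})\cap Z(\gamma_k^{(n)})$. This should reduce, via Bedford--Taylor, to the local integrability of $\chi$ along $Z(\gamma)$, which follows from its H\"older regularity, combined with a transversality argument guaranteeing that the sequences can be chosen so that the intersection is generically $0$-dimensional. A secondary point is to extract from \cite{Bers1} an equidistribution statement in precisely the form written above; if only a weaker (e.g.\ random or averaged) version is available, one replaces the deterministic sequences by generic random ones chosen independently so that the product still converges.
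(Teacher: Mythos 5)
Your overall strategy (Theorem~\ref{theo:tbif} plus equidistribution of the hypersurfaces $Z(\gamma)$ towards $\tbif$, then a covering argument) is the same as the paper's, and your final covering step is exactly how the corollary is meant to be deduced. The genuine gap is in the middle step, which you yourself flag as the ``main obstacle'': from $\frac{1}{\length(\gamma_j^{(n)})}[Z(\gamma_j^{(n)})]\cv c\,\tbif$ for each $j$ separately you cannot conclude that the wedge products $[Z(\gamma_1^{(n)})]\wedge\cdots\wedge[Z(\gamma_N^{(n)})]$ (suitably normalized) converge to $c^N\tbif^N$. Bedford--Taylor continuity of mixed Monge--Amp\`ere operators requires monotone or locally uniform convergence of the potentials, not weak ($L^1_{loc}$) convergence; the potentials here are of the form $\frac{1}{\length(\gamma)}\log\abs{\tr^2\hol(\gamma)-4}$ and only converge in $L^1_{loc}$. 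H\"older continuity of $\chi$ and local integrability along the $Z(\gamma)$'s make each individual product well defined, but they do not prevent loss or spurious concentration of mass in the limit, and no transversality statement fixes this: this failure of continuity is precisely why equidistribution towards higher-order bifurcation measures is a delicate problem in holomorphic dynamics, and it is not addressed by your argument.

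The paper circumvents this by a different mechanism. It proves the equidistribution statement with \emph{iterated} limits ($n_k\cv\infty$ first, then $n_{k-1}$, etc.), by induction on $k$: since $\tbif$ has continuous potential, wedging the already-convergent sequence of closed positive $(k,k)$-currents with the \emph{fixed} current $\tbif$ is a continuous operation, which gives convergence of $\frac{1}{4^k\prod\length(\gamma^i_{n_i})}[Z(\gamma_{n_1}^1,t)\cap\cdots\cap Z(\gamma_{n_k}^k,t)]\wedge\tbif$ to $\tbif^{k+1}$; and then, for $n_1,\dots,n_k$ fixed, the innermost limit is obtained by applying \cite[Thm C]{Bers1} anew to the holomorphic family of representations parameterized by the subvariety $Z(\gamma_{n_1}^1,t)\cap\cdots\cap Z(\gamma_{n_k}^k,t)$, whose bifurcation current is $\tbif$ restricted to it; the reducedness of the family (automatic here since $\hol$ is injective on $P(X)$) guarantees this restricted family is non constant, and the independence of the random sequences of geodesics is what allows this successive application almost surely. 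So the key input you are missing is that the one-codimensional equidistribution theorem must be re-applied on the successive intersection subvarieties, rather than attempting a direct joint convergence of the wedge products.
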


We observe that the value 4 for the squared trace is irrelevant  here. As the proof will show, the result 
holds a.s. when $\gamma_1, \cdots, \gamma_k$ are independent random closed geodesics of length tending to infinity.

\subsection{Notation}~

$\pu = \mathbb P^1(\mathbb C)$ is the Riemann sphere. $z$  often denotes the variable in $\mathbb P^1$

%$\mathbb P^1(\mathbb R)$ is the real projective line. $a$ a variable in $\mathbb P^1$

$\mathbb H$ the upper half plane. $\tau$ a variable in $\mathbb H$

$X$ the finite type Riemann surface on which is defined a projective structure 

%$c : \widetilde{X} \rightarrow X $ the universal covering. $x$ a variable in $\widetilde{X}$: pas besoin

$\sigma$ a parabolic projective structure on $X$.

$\mathsf{dev}  = \mathsf{dev}_\sigma: \widetilde{X} \rightarrow \mathbb P^1$ a developing map %\note{je propose {\sf hol} et {\sf dev}. RD}

$\hol =  \mathsf{hol}_\sigma: \pi_1(X) \rightarrow \text{PSL} (2,\mathbb C)$ the holonomy representation 

$\pi : M_\sigma \rightarrow X$ the flat $\mathbb P^1$-bundle over $X$ %($\pi_\rho$ would probably be better) 

$\mathbb P^1 _x = \pi^{-1}(x)$ the  fiber over $x\in X$.

$\varpi : \widetilde{X} \times \mathbb P^1 \rightarrow M_\sigma$ the quotient map.

$s : X\rightarrow M_\sigma$ a holomorphic section of $\pi$

$\mathcal F  $ the holomorphic foliation defined by the flat connexion

$T$ the harmonic current on $M_\sigma$

$\overline M_\sigma$, $\overline{\mathcal F}$, $\overline{s}$, $\overline{T}$, etc.  are the compactifications of the corresponding objects 
when $X$ is not compact. 

% $\Gamma  = \rho (\pi_1(X))$

\section{Harmonic measures and harmonic currents}

In this section we introduce a number of geometric objects which that will be fundamental in our study: the suspension $M_\sigma$ 
(as well as its compactification $\overline M_\sigma$), and the foliation $\mathcal{F}$
 (resp. $\overline{\mathcal{F}}$). We  also study the ergodic theoretic properties 
  of   $\mathcal{F}$, by way of three closely   related, though slightly different  tools: a family of harmonic measures on the fibers of 
  $M_\sigma$, a foliated harmonic current, and its associated foliated harmonic measure.

\subsection{Generalities}\label{ss:generalities}
We fix once for all a Riemann surface $X$ of finite type, that is, $X$ is biholomorphic to a compact Riemann surface $\overline{X}$ with finitely many points deleted.  We assume that $X$ has negative Euler characteristic. If necessary, we endow $X$ with its hyperbolic metric, which is of finite volume. 

A $\mathbb P^1$-bundle over a Riemann surface $X$ is a holomorphic fibration $M\rightarrow X$ with  $\mathbb P^1$ fibers. It is always the projectivization of a rank $2$ holomorphic vector bundle over $X$. 
If the Riemann surface is compact, the compact complex surface $M$, being a $\mathbb P^1$-bundle over a curve, is algebraic (by 
the GAGA principle), thus in particular it is K\"ahler. We refer to \cite[V.4]{bpvdv}.

A holomorphic $\mathbb P^1$-bundle always admits a smooth section  $X\rightarrow M$  (and even a holomorphic one, see \cite[V.4, p. 139]{bpvdv}). When $X$ is compact, the parity of the self-intersection of such a smooth section   depends only on the fibration; it is even iff $X$ is diffeomorphic to the trivial bundle $X\times \mathbb P^1$, and odd otherwise.

Let $s$ and $f$   respectively denote a smooth section and a fiber of $M\rightarrow X$, then we have that
\begin{equation} \label{eq:homology fiber bundle}  H^2 (M, \mathbb C ) := \mathbb C [s] \oplus \mathbb C [f] , \end{equation}
where $[s]$ and $[f]$ are the cohomology classes dual to $s$ and $f$ respectively. (Throughout this paper, we consistently identify the section $s$ and its graph in  $M$).

In particular, since we can always choose $s$ to be  holomorphic, and  since $M$ is K\"ahler, we obtain an isomorphism between the Dolbeaut cohomology group $H^{1,1}_{\overline{\partial}} (M,\mathbb C)$, the Bott-Chern cohomology group $H^{1,1}_{\fr\overline\fr}(M,\mathbb C)$, and  $H^2 (M,\mathbb C)$. 

\subsection{Parabolic flat $\mathbb P^1$-bundles}\label{ss:model}
%Given a Riemann surface $X$ of finite type, and a representation $\rho : \pi_1(X) \rightarrow \mathrm{PSL}(2,\mathbb C)$ 
%of parabolic type\note{preciser}, 
Given a parabolic projective structure $\sigma$ on $X$, 
we introduce the flat $\mathbb P^1$-bundle $\mathbb P^1 \rightarrow M_{\sigma}\stackrel{\pi}{\rightarrow} X$ with monodromy 
$\mathsf{hol}_\sigma$, namely the quotient of the flat bundle $\widetilde{X} \times \mathbb P^1$ 
under the action of $\pi_1(X)$ given by 
\begin{equation} \label{eq:action of Gamma} \gamma (x,z) = (\gamma x, \mathsf{hol}_\sigma (\gamma)z).\end{equation}
%\note{j'ai enlev\'e les parenth\`eses \`a $h(\sigma)$ car je trouvais ca lourd. Peut \^etre pourrait on noter $h_\sigma$ pour all\'eger encore? OK! fait} 
We denote by $\varpi : \widetilde{X} \times \mathbb P^1 \rightarrow M_{\sigma}$ the natural projection.
Also, we let $\mathcal F$  be the holomorphic foliation on $M_\sigma$ obtained by taking   the quotient of the  horizontal 
fibration $\widetilde{X}\times \set{z}$ of $\widetilde{X} \times \mathbb P^1$.

When $X$ is not compact, 
we can compactify the bundle $M_\sigma$ as a bundle   over  $\overline{X}$.  The 
flat connexion $\nabla$ extends as a meromorphic connexion $\overline{\nabla}$ on $\overline M_\sigma $, 
and the foliation $\mathcal F$ extends as a singular holomorphic foliation $\overline{\mathcal F}$. 

Here are the details. 
Consider the following model for a 
 $\mathbb P^1$-bundle over the unit  disk equipped with a meromorphic flat connexion having a pole over  $0$, 
defined by the differential equation
\begin{equation}\label{eq:model}  \frac{ dv}{du} = \frac{i}{2\pi u} \end{equation}
in coordinates $(u,v)\in \mathbb D\times \mathbb C$. We denote the induced foliation on $\mathbb D \times \mathbb P^1$ by $\mathcal F_m$. The monodromy of the connexion around $u=0$ is the parabolic map $v\mapsto v+1$. Since by assumption the holonomy representation is 
parabolic, we can glue this local model to each of the cusps of $ M_\sigma$ to obtain the desired  $\mathbb P^1$-bundle 
$\overline{M}_{\sigma}$ over $\overline{X}$ equipped with a meromorphic flat connexion $\overline{\nabla}$ and singular holomorphic foliation $\overline{\mathcal F}$. 

\begin{figure}[h]
\centering \vspace{.5cm}
\def\svgwidth{8cm}
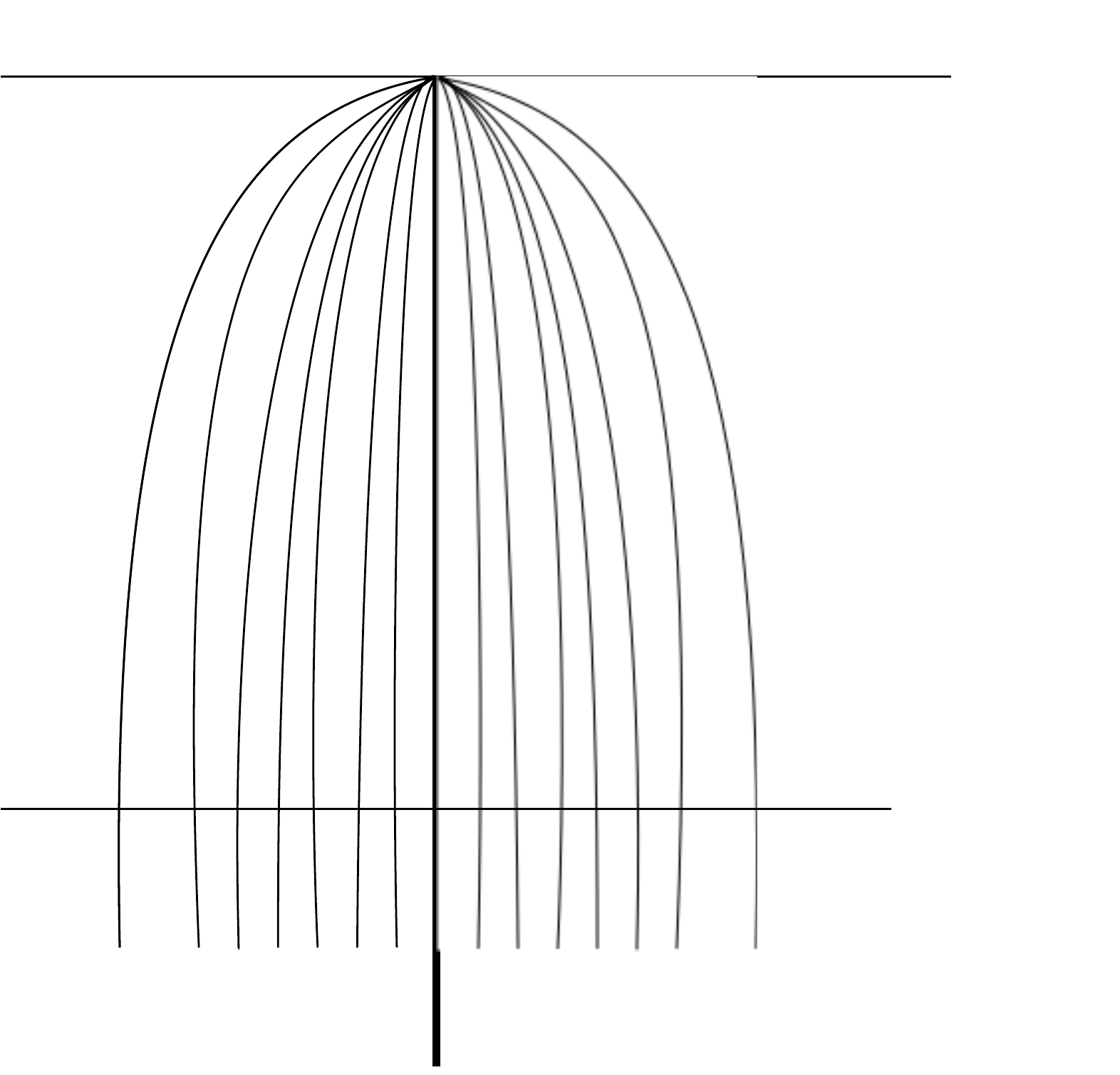
 \caption{Schematic view of the compactified foliation}
  \label{fig:foliation}
\end{figure}

\subsection{Parabolic $\mathbb P^1$-structures and holomorphic sections of flat $\pu$-bundles}

Let now $\mathsf{dev}: \widetilde{X} \rightarrow \mathbb P^1$ be a developing map of the  parabolic $\mathbb P^1$-structure $\sigma$.
%, having holonomy $\rho$. 
The map $\widetilde X\ni x\mapsto (x, \mathsf{dev}(x))\in \widetilde X\times \pu$
 is $\pi_1(X)$-equivariant, hence it descends to  a section $s : X\rightarrow M_\sigma$ 
of the bundle $M_\sigma \rightarrow X$. This section will play an important role in what follows. 

\begin{lem} \label{l:section}
The section $s$ extends to a section $\overline{s} : \overline{X} \rightarrow \overline M_\sigma$ which is transverse to the foliation $\overline{\mathcal F}$. %everywhere except  at the critical orbits of $\mathsf{dev}$. 
\end{lem}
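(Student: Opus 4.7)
The plan is to work locally at each cusp, using the parabolic normal form of the projective structure together with the local model for the compactification. On $X$ itself, transversality of $s$ to $\mathcal F$ is automatic: the lift of $s$ to $\widetilde X\times\pu$ is $x\mapsto (x,\dev(x))$ and, since $\dev$ is locally injective, it is nowhere tangent to a horizontal leaf $\{z=\mathrm{const}\}$. So both extendability and transversality reduce to a computation near each puncture. Fix such a puncture $p$; the parabolic assumption provides a local coordinate $u$ on $\overline X$ with $u(p)=0$, a uniformization $u=e^{2\pi i\tau}$ of $U\setminus\{p\}$ by $\hh$, and (after a harmless $\PSL$ post-composition of $\dev$) the identification $\dev(\tau)=\tau$ on this lift. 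Consequently the monodromy of the flat bundle around $p$ acts diagonally as $(\tau,z)\mapsto(\tau+1,z+1)$ on $\hh\times\pu$, and the section $s$ lifts to $\tau\mapsto(\tau,\tau)$.

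Next I would identify this flat trivialization with the local model~\eqref{eq:model}. Pulling $dv=\frac{i}{2\pi u}du$ back via $u=e^{2\pi i\tau}$ gives $dv=-d\tau$, so the leaves of $\mathcal F_m$, lifted to $\hh$, read $v=-\tau+\mathrm{const}$. Hence the map
\[ \Phi\colon (\tau,z) \longmapsto \bigl(e^{2\pi i\tau},\, z-\tau\bigr) \]
sends horizontal leaves to leaves of $\mathcal F_m$ and, since $z-\tau$ is invariant under the diagonal monodromy, descends to a bundle isomorphism between $M_\sigma|_U$ and the local model, which extends holomorphically across $p$. Under $\Phi$ the section $z=\dev(\tau)=\tau$ corresponds to $v\equiv 0$, so $\overline s$ extends holomorphically across $p$ by $\overline s(p)=(0,0)$ in the $(u,v)$-chart.

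For transversality at $(0,0)$, I multiply the defining equation by $u$ to obtain the holomorphic $1$-form $\omega=u\,dv-\frac{i}{2\pi}\,du$, which never vanishes on $\dd\times\cc$; the unique singularity of $\overline{\mathcal F}$ sits at the parabolic fixed point $(0,\infty)$, which is disjoint from the section. At $(0,0)$ one has $\omega=-\frac{i}{2\pi}\,du$, so the leaf of $\overline{\mathcal F}$ through $(0,0)$ is tangent to $\partial_v$, whereas $\overline s$ (parametrized by $u$) is tangent to $\partial_u$; these directions are linearly independent, giving the desired transversality. The step that requires care is the gauge change $v=z-\tau$, which must simultaneously match the leaves of the two foliations and descend past the parabolic monodromy; once that is verified, the rest is an elementary coordinate argument, the essential geometric feature being that the section meets each compactified fiber at the regular point $(0,0)$ of $\overline{\mathcal F}$ rather than at the parabolic fixed point $(0,\infty)$ where the foliation becomes singular.
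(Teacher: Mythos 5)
Your proof is correct and follows essentially the same route as the paper: near each cusp you use the change of variables $(\tau,z)\mapsto\bigl(u=e^{2i\pi\tau},\,v=z-\tau\bigr)$, which is invariant under the parabolic monodromy and identifies the flat bundle with the local model \eqref{eq:model}, so that $s$ becomes $u\mapsto(u,0)$ and extends across the puncture. Your explicit verification that the only singularity of $\overline{\mathcal F}$ in the model sits at $(0,\infty)$ and that $\overline s$ meets the fiber $\{u=0\}$ transversally at the regular point $(0,0)$ is a welcome addition of detail that the paper leaves implicit.
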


\begin{proof} To make the compactification more explicit, consider  a neighborhood  $N$ of  a puncture 
equipped with a coordinate $x$ % around a puncture %$p$ 
in which  the projective structure is defined in the punctured unit disk
 by its developing  map $x\mapsto \log x$ 
(see \S\ref{ss:parabolic type PS}). Write $x= \exp (2i\pi \tau)$ with $\tau\in \hh$. % in the upper-half plane. 
The $\mathbb P^1$-bundle $M_{\sigma }$ over the punctured disk $  N \simeq \mathbb D ^*$ is the quotient of $\mathbb H \times \mathbb P^1$ by the cyclic group generated by $(\tau, z)\mapsto (\tau + 1, z + 1)$. The map 
\begin{equation} \label{eq:identification} (\tau, z) \longmapsto (u= \exp(2i\pi \tau) , v= z-\tau )\end{equation}
is invariant under this transformation, and it maps the horizontal foliation to the foliation defined by $dz= dv + d\tau = \big(dv + \frac{du}{2i\pi u}\big) = 0$. Hence  \eqref{eq:identification} provides  the identification between the bundle $ M_\sigma$ over $  N \simeq \mathbb D^*$ and the model~\eqref{eq:model}.

The section $s$ of $ M_\sigma$ is defined in the coordinates $(\tau, z)$ to be  the diagonal $z \mapsto (z,z)$, 
so in the coordinates $(u,v)$ it is given by $u \mapsto (u,0)$. 
Hence the section $s$ extends as a section $\overline{s}$ of $\overline{ M_\sigma}$. 
%The last part of the lemma is obvious from the proof. 
\end{proof}

\subsection{Fiberwise harmonic measures}\label{ss:fiberwise}

Fix an biholomorphism between  $\widetilde{X}$ and $\mathbb H$, % with a biholomorphism. 
 thereby identifying $\pi_1(X)$ with a lattice $\Gamma$ in $\PSLR$. 
Recall from \cite{furstenberg, margulis} that if $\rho: \Gamma\cv\PSL$ is a non elementary representation,
 there exists a unique (Lebesgue) measurable $\rho$-equivariant map 
 $\Phi : \mathbb P^1(\mathbb R) \rightarrow \mathbb P^1$ defined almost everywhere. Likewise, if $\text{Prob}(\mathbb P^1)$ 
 denotes the compact convex set of probability measures on $\mathbb P^1$ (endowed with the weak* topology), then the map
 $a \in \mathbb P^1(\mathbb R) \mapsto 
 \delta_{\Phi(a)}  \in \text{Prob} (\mathbb P^1)$ is the unique measurable  $\rho$-equivariant map.

 A measurable family of probability measures $(m_{\tau})_{\tau\in \hh}$ 
 on $\pu$ is said to be {\em harmonic} if for every test function $\psi$, the function 
 $\hh\ni \tau\mapsto \int  \psi m_\tau $ is harmonic.

\begin{prop}\label{p:harmonic measures}
 Let $\Gamma$ be a lattice in $\PSLR$ and  $\rho: \Gamma\cv\PSL$ be a non elementary representation. 
 Then there exists a unique measurable family of probability measures $\{ \nu_\tau \} _{\tau\in \mathbb H} $ on $\pu$ such that :
%Then the family of harmonic measures $\{ \nu_\tau \} _{\tau\in \mathbb H} $ is the unique family of probability measures such that 
\begin{enumerate}[\em (i)] 
\item $\tau \mapsto \nu_\tau$ is harmonic; 
\item $\nu_{\gamma \tau } = \rho(\gamma)_* \nu_\tau$ for every $\gamma \in \pi_1(X)$ and every $\tau\in \mathbb H$. 
\end{enumerate}  
In particular,  we have the formula 
\begin{equation}\label{eq:harmonic current} 
\text{ for every } \tau \in \mathbb H,\  \nu_\tau  = \frac{1}{\pi}  \Phi _*  \big( \frac{\Im \tau \ d a}{ |\tau - a |^2} \big),  
\end{equation}
and these measures coincide with those defined in Definition \ref{defprop:harmonic measure}.
\end{prop}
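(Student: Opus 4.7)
The plan is to take the explicit formula~\eqref{eq:harmonic current} as the \emph{definition} of $\nu_\tau$, verify that it satisfies (i) and (ii), then prove uniqueness through the boundary behavior of bounded harmonic functions, and finally match this with Definition~\ref{defprop:harmonic measure} via a discretization argument.

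First, set $m_\tau = \frac{1}{\pi}\frac{\Im \tau\, da}{|\tau-a|^2}$, which is the standard Poisson kernel on $\hh$, equivalently the exit distribution of Brownian motion on $\hh$ started at $\tau$. Define $\nu_\tau := \Phi_* m_\tau$, where $\Phi$ is the Furstenberg boundary map associated to $\rho$. For any continuous test function $\psi$ on $\pu$,
\begin{equation*}
\tau \longmapsto \int \psi\,d\nu_\tau = \int (\psi\circ \Phi)\,dm_\tau
\end{equation*}
is the Poisson integral of the bounded measurable function $\psi\circ\Phi$, hence harmonic in $\tau$. This gives (i). For equivariance (ii), since $\gamma\in\PSLR$ acts on $\hh$ by an isometry that preserves Brownian motion, we have $m_{\gamma\tau}=\gamma_* m_\tau$; combining this with the $\rho$-equivariance $\Phi\circ\gamma = \rho(\gamma)\circ\Phi$ yields $\nu_{\gamma\tau}=\rho(\gamma)_*\nu_\tau$.

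For uniqueness, let $\{\widetilde\nu_\tau\}$ be another family satisfying (i), (ii). For each $\psi\in C^0(\pu)$ the function $h_\psi(\tau):=\int\psi\,d\widetilde\nu_\tau$ is bounded and harmonic on $\hh$, so by Fatou's theorem it admits non-tangential boundary values $h_\psi^*$ almost everywhere on $\mathbb P^1(\mathbb R)$, and is recovered as the Poisson integral of $h_\psi^*$. The map $\psi\mapsto h_\psi^*(a)$, defined for Lebesgue almost every $a$, produces a measurable map $\widetilde\Phi:\mathbb P^1(\mathbb R)\to\mathrm{Prob}(\pu)$, and condition (ii) passes to the boundary as $\rho$-equivariance of $\widetilde\Phi$. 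By the uniqueness statement recalled just before the proposition, $\widetilde\Phi(a)=\delta_{\Phi(a)}$ for a.e.\ $a$; integrating against $m_\tau$ gives $\widetilde\nu_\tau=\Phi_* m_\tau=\nu_\tau$.

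It remains to identify the family $\nu_\tau$ with the measures of Definition~\ref{defprop:harmonic measure}. Lift a Brownian trajectory on $X$ starting at a basepoint to $\widetilde X\simeq\hh$ starting at $\tau$; almost surely it converges to a point $a\in \mathbb P^1(\mathbb R)$ with distribution $m_\tau$. Furstenberg's construction of $\Phi$ proceeds precisely by discretizing this Brownian motion into a random walk on $\Gamma$, and showing that the image walk $\rho(\gamma_n)\cdot z_0$ converges almost surely to $\Phi(a)$; since the ergodic averages of $\dev_*\delta_{\omega(s)}$ are driven by the same dynamics, they concentrate at the same point $\mathrm e(\omega)=\Phi(a)$ for $W_\star$-a.e.\ $\omega$. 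Pushing forward $W_\tau$ by $\mathrm e$ therefore yields $\Phi_* m_\tau = \nu_\tau$.

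The main delicate point is the last step: the identification between the analytic definition via the Furstenberg map and Hussenot's probabilistic definition through Brownian ergodic averages, which rests on the fact that $\dev$ transports Brownian paths on $\widetilde X$ into a conformal process on $\pu$ whose limiting behavior is controlled by the same discretization of Brownian motion that defines $\Phi$. All other steps are either formal (harmonicity and equivariance from the Poisson kernel) or a direct appeal to the uniqueness of the Furstenberg boundary map already cited.
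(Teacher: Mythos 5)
Your existence and uniqueness arguments are essentially the paper's: define $\nu_\tau=\Phi_*m_\tau$, get (i) because $\int(\psi\circ\Phi)\,dm_\tau$ is a Poisson integral of a bounded measurable function, get (ii) from $m_{\gamma\tau}=\gamma_*m_\tau$ and the equivariance of $\Phi$, and for uniqueness use Fatou boundary values to produce a measurable $\rho$-equivariant map into $\mathrm{Prob}(\pu)$ and invoke the uniqueness of the Furstenberg map. That part is fine.

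The gap is in the last step, the identification with the measures of Definition-Proposition \ref{defprop:harmonic measure}. You assert that $\mathrm e(\omega)=\Phi(a(\omega))$ almost surely, where $a(\omega)$ is the exit point of the lifted Brownian path, on the grounds that Furstenberg's construction "discretizes the same Brownian motion" and that the Ces\`aro averages of $\dev_*\delta_{\omega(s)}$ are "driven by the same dynamics". This is not a proof: the convergence of $\frac1t\int_0^t \dev_*\delta_{\omega(s)}\,ds$ to a Dirac mass located precisely at $\Phi$ of the Brownian exit point is a genuine dynamical statement (it requires contraction-type estimates along the path, of the kind that appear in Lemma \ref{l:technical step} via Oseledets, or in Hussenot's thesis), and nothing in your argument supplies it. The paper avoids this entirely: it checks that Hussenot's family itself satisfies (ii) (obvious from equivariance of $\dev$ and of Brownian motion) and (i), the latter by the strong Markov property --- for a Borel set $B$, the function $u(x)=\pp_x\big(\mathrm e(\omega)\in B\big)$ satisfies $u(x)=\ee_x\big(u(\omega(T))\big)=\int_{\fr D}u$ for the exit time $T$ of a small disk $D$, hence has the mean value property and is harmonic --- and then applies the uniqueness statement you have already proved. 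If you replace your pointwise claim by this mean-value verification, your proof closes with no extra machinery; as written, the pointwise identification $\mathrm e(\omega)=\Phi(a(\omega))$ is an unproved (and harder) statement than the proposition itself requires.
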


The family $\set{\nu_\tau, \tau\in \hh}$  will be simply referred to as the {\em family of harmonic measures} associated to the $\pu$-structure. 

\begin{proof} 
%The proposition follows from the Poisson formula 
%together with 
% the following fact, which follows from the definition of the Poisson boundary (see   \cite{furstenberg, ??}\note{trouver ref pr�cise}): if 
% \text{Prob}(\mathbb P^1)$ is the set of compact convex set of probability measures on $\mathbb P^1$, the map $a \in \mathbb P^1
% (\mathbb R) \mapsto \delta_{\Phi(a)}  \in \text{Prob} (\mathbb P^1)$ is the unique measurable  $\rho$-equivariant map.
It will be  convenient to view $\mathbb P^1(\rr)$ as the boundary of the upper half plane. 
First, it follows from the Poisson formula and the equivariance of $\Phi$  that \eqref{eq:harmonic current} 
defines a family of harmonic measures $\nu_\tau$ satisfying (i) and (ii). 

To establish the uniqueness statement, fix  
 a family of probability measures  $\{\nu'_\tau\}_{\tau \in \hh}$  on $\mathbb P^1$ satisfying (i) and (ii). Then, 
  Fatou's %lemma shows that (after identification of $\widetilde{X}$ with $\mathbb H$), 
theorem on boundary values on bounded harmonic functions implies that   there is a measurable map $\widehat \nu' : \partial \mathbb H \rightarrow \text{Prob} (\mathbb P^1)$ with values in the set of probability measures on $\mathbb P^1$, such that the Poisson formula holds, namely for every $\tau \in \mathbb H$
\[  \nu'_\tau = \frac{1}{\pi}\int_{\mathbb R}  \frac{\Im \tau }{ |\tau - a |^2} \widehat \nu' (a) da .\]
The map $\widehat \nu'$ is $\rho$-equivariant since   the family $\{\nu'_\tau\}_{\tau \in \mathbb H}$ is. Hence 
by the observations preceding the proposition, we get 
 that a.s. $\widehat{\nu'}(a)$ is the Dirac mass at $\Phi (a)$, and we are done. 
%
%It remains to prove that conversely, the measures~\eqref{eq:harmonic measure} satisfy (i) and (ii). This is due to the fact that the 
%Poisson kernel $ \frac{1}{\pi}  \frac{\Im \tau }{ |\tau - a |^2}$ is harmonic in the $\tau$-variable, $\tau \in \mathbb H$, and that $\Phi$ 
%is $\rho$-equivariant. 

The fact that these measures coincide with the ones from Definition-Proposition
 \ref{defprop:harmonic measure} follows from this uniqueness. 
Indeed, the family of measures defined by Definition \ref{defprop:harmonic measure} 
clearly satisfies the equivariance property (ii).  
To check (i) we adapt the classical argument for the harmonic dependence of the harmonic measures with respect to the starting point. 
%\note{rechangement ici}
Indeed let $\mathrm e (\omega)$ be the endpoint mapping  defined  in Definition \ref{defprop:harmonic measure}. 
Let $B\subset\pu$ be any Borel set. Let us prove that $u:x\mapsto \pp_x(e(\omega)\in B)$ is harmonic. 
For this, identify $ \widetilde X$ with the unit disk and $D$  be a small disk centered at $x\in \widetilde X$. 
For $\omega\in \Omega_x$, let $T = \inf\set{t>0, \omega(t)\notin D} $.   
It follows from the strong Markov property of Brownian motion in $\widetilde X$, that 
$$u(x)  = \ee_x\lrpar{\pp_{{\omega(T)}}(\mathrm e(\omega(\cdot - T)) \in B} = \ee_x\big(u(\omega(T))\big) = \int_{\fr D} u.$$ 
Therefore $u$ satisfies the mean value property and the result follows.
\end{proof}

\begin{rmk}\label{r:measurable isomorphism} 
The map $\Phi$ is in  general not injective on any full measure subset of $\mathbb P^1(\mathbb R)$. However 
a theorem of Ledrappier shows that 
this is the case when $\rho$ is faithful and discrete, see \cite{ledrappier poisson boundary}. 
This result will be used  in section \ref{s:dimension}.
\end{rmk}

\subsection{Harmonic currents} \label{ss:harmonic currents} 
Given a foliated complex surface $(M,\mathcal F)$ (possibly with  singularities), 
a {\em directed} (or {\em foliated}) {\em  harmonic current} (often simply abbreviated as   ``harmonic current" in the sequel) is a positive current of bidegree $(1,1)$
%current will be called \text{harmonic} if 
%\begin{itemize} 
%\item it is $\partial \overline{\partial}$-closed, 
%\item it is \textit{directed by the foliation} $\mathcal F$, namely it is positive when evaluated on a $(1,1)$-form which is positive on the tangent bundle to the foliation. 
%\end{itemize}
which is $\partial \overline{\partial}$-closed, and such that 
$\langle T, \psi \rangle\geq 0$ if $\psi$ is a (1,1) form which is positive along the leaves.  
%\note{chipotage: prendre plut\^ot $\psi$ pour la fonction test, $\varphi$ d\'esignant plut\^ot les d\'esint\'egrations.}
% positive when evaluated on a $(1,1)$-form which is positive on the tangent bundle to the foliation. 

Such currents have the following local structure outside the singular set $\text{sing} (\mathcal F)$: 
in a foliation box biholomorphic to the bidisk $\mathbb D \times \mathbb D$ 
in which  the foliation is the horizontal fibration, there exists a finite % non-negative 
positive measure $m$ on $\mathbb D$, and a 
non-negative bounded measurable function % $\varphi : \mathbb D \times \mathbb D \rightarrow (0,\infty)$, such that 
such that 
\begin{equation}\label{eq:foliated}
 T = \int \varphi \ [\mathbb D \times \set{w}] \ dm(w) ,
 \end{equation}
and moreover $\varphi (.,w)$ is harmonic for $m$-a.e. $w\in \mathbb D$. Here as usual  $[\mathbb D \times \set{w}]$ stands for
 the current of integration on $\mathbb D \times \set{w} $.
The product $\varphi m$ is a well-defined object, which can be thought of as a transverse measure for the foliation 
$\mathcal F$. In particular, if $C\subset M$ is a holomorphic curve disjoint from $\text{sing}(\mathcal{F})$, 
we can define  the restriction $T\rest{C}$ of $T$ to $C$, 
also referred to 
%the current $T$ induces a measure   on $C$, which will be referred to 
as the geometric intersection $T\geom [C]$ between $T$ and $C$). Observe that the same
    makes sense for any current of the form \eqref{eq:foliated}, whenever harmonic or not.

The existence of a harmonic current directed by the foliation is classical when $M$ is compact and $\mathcal{F}$ is non-singular 
(see e.g. \cite{Ghys}); the singular case was treated in  \cite{BS}. 

\medskip

Assume now that $\mathcal{F}$ is the foliation by flat sections of $M_\sigma$. 
There is a 1-1 correspondence between foliated harmonic currents and the fiberwise harmonic measures of \S \ref{ss:fiberwise}. 
Indeed, consider a foliated harmonic current $T$ on $M_\sigma$, normalized so that one (hence all) of its vertical slices is of unit mass. 
Lifting $T$ to the universal cover $\hh\times \pu$, 
we obtain a   harmonic current $\widetilde{T}$
 directed by the horizontal fibration,  that is invariant with respect to the 
 action of $\Gamma\simeq \pi_1(X)$ defined in \eqref{eq:action of Gamma}. 
 %\note{j'ai encore rechang\'e : ok? "equivariant" remplac\'e par "invariant"}
 %action of $\Gamma\otimes \rho(\Gamma)$.  \note{c'est plut\^ot l'action de $\Gamma$ non?}
Restricting to the vertical fibers $\set{\tau}\times \pu$ we get 
 a family of measures $\nu_\tau$ %on the fibers $\set{\tau}\times \pu$ 
 which is easily seen to satisfy the assumptions of Proposition 
 \ref{p:harmonic measures}. 
 
 Conversely, any family of measures  $(\nu_\tau)_{\tau\in \hh}$   on $\set{\tau}\times \pu$ 
 satisfying the assumptions of Proposition \ref{p:harmonic measures} gives rise to a foliated harmonic current on $M_\sigma$. 
 For this, working  first on $\hh\times \pu$, we   construct from (i) a harmonic current $\widetilde T$ directed by the horizontal fibration. 
 Indeed,  the Poisson formula asserts that $\nu_\tau$ is a convex combination  of measures of the 
 form $h(\tau, a)\delta_{\Phi(a)}$, where $\tau\mapsto h(\tau, a)$ is harmonic. Then we get $\widetilde T$ by taking 
  the corresponding combination of currents of the form $h(\tau, a)[\hh\times \set{\Phi(a)}]$. 
  From the equivariance property (ii), $\widetilde{T}$ descends to a foliated harmonic current on $M_\sigma$ and we are done.  

%Another way to define the harmonic current $T$ is to introduce the structure $\sigma_{Fuchs}$ on $X$, and to say that the map $id \times \Phi$ defines a measurable map $M_{Fuchs} \rightarrow M_\sigma$ which maps leaves of $\mathcal F_{Fuchs}$ to leaves of $\mathcal F_\sigma$ biholomorphically. 

 The following uniqueness statement will be of utmost importance to us. When $X$ is compact it was already established in \cite{dk}.
 % (see also \cite{fornaess sibony}). \note{!!}
 
 \begin{prop}\label{p:extension}
Let $X$, $\sigma$ and $M_\sigma$ be as above.
 The singular foliation $\overline{\mathcal{F}}$ on the compactified suspension $\overline{M}_\sigma$ 
 admits a unique normalized foliated harmonic current, carrying no mass on the fibers over the punctures.
 \end{prop}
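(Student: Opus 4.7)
\medskip

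\noindent\textbf{Plan.} The strategy is to reduce the proposition to the uniqueness of the fiberwise harmonic family in Proposition~\ref{p:harmonic measures}, via the correspondence set up in \S~\ref{ss:harmonic currents} between normalized foliated harmonic currents on $M_\sigma$ and harmonic $\pi_1(X)$-equivariant families of probability measures on the fibers. The new content compared to \cite{dk} is to control the behavior near the singular fibers of $\overline{\mathcal F}$, and this is where the parabolic assumption on $\sigma$ plays a decisive role.

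\medskip

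\noindent\textbf{Uniqueness.} Suppose $\overline T_1, \overline T_2$ are two normalized foliated harmonic currents on $\overline M_\sigma$ carrying no mass on the fibers over the punctures. Since $\partial\overline\partial$-closedness and directedness are local, the restrictions $T_i := \overline T_i\rest{M_\sigma}$ are normalized foliated harmonic currents on $M_\sigma$. The correspondence of \S~\ref{ss:harmonic currents} associates to each $T_i$ a harmonic equivariant family $(\nu^{(i)}_\tau)_{\tau \in \hh}$ on $\pu$, and Proposition~\ref{p:harmonic measures} forces $\nu^{(1)} = \nu^{(2)}$, hence $T_1 = T_2$. The difference $\overline T_1 - \overline T_2$ is then supported on the singular fibers but, by hypothesis, carries no mass there, so $\overline T_1 = \overline T_2$ on $\overline M_\sigma$.

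\medskip

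\noindent\textbf{Existence.} Let $T$ be the normalized foliated harmonic current on $M_\sigma$ associated to the unique harmonic family of Proposition~\ref{p:harmonic measures}, and define $\overline T$ as its trivial extension across the singular fibers. I would then verify: (a) $T$ has locally finite mass near each singular fiber, so $\overline T$ is a well-defined positive $(1,1)$-current on $\overline M_\sigma$ placing no mass on the singular fibers; and (b) $\overline T$ is $\partial\overline\partial$-closed. For (a), I would work in the local model of \S~\ref{ss:model} and lift to the universal cover $\hh \times \pu$ via $(\tau, z) \mapsto (u, v) = (e^{2\pi i \tau}, z - \tau)$. The lift of $T$ admits the Poisson representation
\begin{equation*}
\widetilde T = \int_{\partial \hh} p(\tau, a)\, [\hh \times \set{\Phi(a)}]\, da,
\end{equation*}
with $p(\tau, a) = \Im\tau/(\pi |\tau - a|^2)$ and $\Phi$ the Furstenberg map (formula~\eqref{eq:harmonic current}). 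Pairing $\widetilde T$ with (the pullback of) a K\"ahler form $\omega$ on $\overline M_\sigma$ over a fundamental domain for the deck action $(\tau, z) \mapsto (\tau+1, z+1)$ lying over a cusp neighborhood, one observes that along each leaf $\set{z = \Phi(a)}$ the restriction of $\omega$ inherits a Fubini--Study-type decay of order $|\Phi(a) - \tau|^{-4}$ (because $\omega$ is smooth near $(u, v) = (0, \infty)$ in $\overline M_\sigma$, and a smooth K\"ahler form on a fiber $\pu$ decays like $(1 + |v|^2)^{-2}$ in affine coordinates). Combining this with $\int_{\mathbb R} p(\tau, a)\, da = 1$ and with the equivariance $\Phi(a+1) = \Phi(a) + 1$ (which bounds $|\Phi(a) - a|$), a straightforward Fubini computation yields a finite bound on the mass. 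Statement (b) will then follow from a Skoda--El~Mir-type extension theorem for positive $\partial\overline\partial$-closed currents with locally finite mass near an analytic subvariety, applied to the union of singular fibers.

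\medskip

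\noindent\textbf{Main obstacle.} The heart of the proof is the mass estimate in (a). The parabolic nature of $\sigma$ is what makes it work: the precise shape of the local model $dv/du = i/(2\pi u)$, together with the concentration of the Poisson kernel as $\Im\tau \to \infty$ and the decay of $\omega$ at $(0, \infty)$, conspire to give just enough convergence. For a non-parabolic singularity one would instead expect the analogous candidate current to charge the singular fiber, so that the uniqueness statement in the form stated here would break down.
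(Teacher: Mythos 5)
Your uniqueness half is exactly the paper's argument (restrict to $M_\sigma$, use the correspondence of \S\ref{ss:harmonic currents} and Proposition \ref{p:harmonic measures}, then use the no-mass hypothesis on the puncture fibers), and the finiteness of the mass of $T$ near the puncture fibers that you aim at in (a) is true. The genuine gap is in step (b). There is no ``Skoda--El Mir-type'' theorem asserting that the trivial extension across an analytic curve of a positive $\partial\overline\partial$-closed current of locally finite mass is again $\partial\overline\partial$-closed. A counterexample on $\mathbb{D}^*_u\times\mathbb{D}_v$ is $T'=\log\frac{1}{|u|}\,\frac{i}{2}\,dv\wedge d\overline{v}$: it is positive, $\partial\overline\partial$-closed off $\{u=0\}$, of finite mass near $\{u=0\}$, yet $dd^c$ of its trivial extension is a nonzero negative current carried by $\{u=0\}$. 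The available general results (e.g.\ the theorem of \cite{dee} that the paper quotes) give only a one-sided conclusion of this type, so finite mass alone does not rule out a ``defect'' of $dd^c\overline{T}$ on the compactifying fibers; ruling it out requires using the foliated structure, which your step (b) never invokes. Concretely: away from the singular point $(0,\infty)$ of the model $\mathcal F_m$ the puncture fiber is $\overline{\mathcal F}$-invariant (it is a leaf), so in a flow box around a regular point of that fiber the complement of the fiber is a union of entire plaques carrying leafwise harmonic densities, and there finite transverse mass does force the trivial extension to be harmonic; at the singular point one then needs an extension theorem for harmonic currents through a point, which is precisely where \cite{dee} is legitimately used. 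This is how the paper proceeds, with Lemma \ref{l:extension model} -- a bound (by $1$) on the transverse mass of an \emph{arbitrary} normalized directed harmonic current on the curve $s^*=\{v=0\}$, proved from the Poisson representation with boundary measures $\nu_a$ and the equivariance $\nu_{a+1}=(z+1)_*\nu_a$ -- as the quantitative input; note that this transverse bound on $\overline{s}$ is also needed later (Step 2 of the proof of Definition-Proposition \ref{defprop:degree}), so your total-mass estimate would not replace it.

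A secondary, repairable flaw: the parenthesis ``which bounds $|\Phi(a)-a|$'' is false. The relation $\Phi(a+1)=\Phi(a)+1$ only makes $a\mapsto\Phi(a)-a$ periodic as a measurable function, and here it is genuinely unbounded, since $\Phi$ takes values in every neighborhood of the parabolic fixed point $\infty$ on sets of positive Lebesgue measure (the support of the harmonic measures is the limit set). Your mass estimate survives without that claim: using periodicity only to unfold the sum over deck translates, it reduces to the bound $\sup_{w\in\cc,\ b\in[0,1]}\int_{\{\Im\tau\ge c\}}\frac{\Im\tau}{|\tau-b|^2}\,\frac{dA(\tau)}{(1+|w-\tau|^2)^2}<\infty$, which holds uniformly in $w$; but as written the justification of the Fubini computation is incorrect and should be redone along these lines.
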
 
 
\begin{proof}
In $M_\sigma$, the existence and   uniqueness of a foliated harmonic current $T$ giving mass 1 to the vertical fibers    follows 
from the above  discussion, together with Proposition \ref{p:harmonic measures}.   Thus, the point is   to show that  $T$ admits an
extension to  a harmonic current  $\overline{T}$ on $\overline{M}_\sigma$ with no mass  on the fibers over the punctures, which is then 
necessarily unique.   

Recall that the foliation $\mathcal{F}$ has a well defined rigid model $\mathcal{F}_m$ in a neighborhood of each puncture, which was defined in  \S\ref{ss:model}. 
%
% 
% 
%In what follows, it will be of fundamental importance to us  that there exists a 
%canonical harmonic current on $(X_\rho, \mathcal F )$  when $\rho$ is non elementary. For the uniqueness, 
%see also~\cite{dk} in the 
%case $X$ is compact.  Observe that Proposition \ref{p:harmonic measures} shows that there exists a unique harmonic current on $M_\sigma$ which gives mass $1$ to the fibers. It is defined by putting together the family of harmonic measures $\nu_x$. 
%We denote by $T$ this current. The following result shows that $T$ can be extended to a harmonic current $\overline{T}$ on $\overline M_\sigma$. 
%
%\begin{lem}\label{l:extension}
%The current $T$ extends in a unique way as a harmonic current  $\overline{T}$ on $\overline{X_\rho}$ which vanishes on the fibers over the punctures of $X$. 
%\end{lem}
%
%\begin{proof} 
%We claim the following \\
%
%\noindent \textit{Let $ \mathcal F_m$ be the model foliation on $\mathbb D\times \mathbb P^1$ defined in \S\ref{ss:model} , and let $T$ be a harmonic current supported in $\mathbb D^* \times \mathbb P^1$ such that the restriction of $T$ to any fiber $u\times \mathbb P^1$, $u\neq 0$ is a probability measure. Then the restriction of $T$ to the curve $s^* = \{ 0< |u|\leq e^{-2\pi},\ v= 0\}$ has finite mass. }\\
The key is the following lemma. 

\begin{lem}\label{l:extension model}
Consider the model foliation 
$ \mathcal F_m$  on $\mathbb D\times \mathbb P^1$, as defined in \S\ref{ss:model}. Let $T$ be any foliated
harmonic current   in $\mathbb D^* \times \mathbb P^1$, normalized so that 
   the restriction of $T$ to any fiber $u\times \mathbb P^1$, 
$u\neq 0$ is a probability measure. Then the restriction of $T$ to the curve $s^* = \{ 0< |u|\leq e^{-2\pi},\ v= 0\}$ has finite mass.
\end{lem}

From this  and Lemma \ref{l:section} (see also Figure \ref{fig:foliation}),
 we deduce that  the harmonic current extends to $\mathbb D \times \mathbb P^1 \setminus \text{sing} (\mathcal F_m)$. It then follows from 
 general extension results for harmonic currents (see e.g. \cite[Thm 5]{dee}) 
 that  it also compactifies at the singular 
 points of $\mathcal F_m$. The proposition follows. \end{proof}
 
\begin{proof}[Proof of Lemma \ref{l:extension model}]
The harmonic current $T$ lifts as a harmonic current $\widetilde{T}$ on $\mathbb H\times \mathbb P^1$ which is defined in the $(\tau,z)$-coordinates by a family of measures $\{  \nu_\tau \}_{\tau \in \mathbb H}$ satisfying 
\begin{equation} \label{eq:equivariance} \nu_{\tau +1} = (z+1)_* \nu_\tau ,\end{equation}
%By assumption, the measures $\nu_\tau$ are probability measures that 
and   depending harmonically on $\tau$.  
As in the proof of Proposition \ref{p:harmonic measures}, the Poisson formula implies  that there exists a family of probability measures $\{ \nu_a \}_{a\in \mathbb R}$ defined for a.e. $a\in \mathbb R$ and depending measurably on $a$, such that for every $\tau\in \mathbb H$
\[  \nu_\tau = \int _{\mathbb R}   \frac{\Im \tau }{(\Re \tau - a)^2 + \Im \tau ^2} \nu_a da.\]
The equivariance relation~\eqref{eq:equivariance} implies that 
\[ \nu_{a+1} =  (z+1)_* \nu_a  \]
almost everywhere. A canonical example of such a family of measures is given by $\nu^{\rm can} _a = \delta_a$ the Dirac mass at the point 
$a$. It defines a harmonic current $T^{can}$ (corresponding to the harmonic current on the suspension corresponding 
to the identity representation). 

A fundamental domain for the pull-back of $s^*$ in $\mathbb H \times \mathbb P ^1$ is the subset $D\times D$
 of the diagonal in
$ \hh\times \pu$, 
 %$\{ (\tau,\tau)\ |\ \tau\in D\}$
  where 
$$D = \set{ \frac{-1}{2}\leq \Re \tau \leq \frac{1}{2} , \ \Im \tau \geq 1 }\subset \mathbb H.$$ 
Therefore, we need to prove that the   integral 
\[  I = \int _{\mathbb R}  da \cdot   \int _D \frac{\Im \tau }{(\Re \tau - a) ^2 + (\Im \tau )^2} \nu_a(d\tau)  \]
is finite. Performing the change of variable $a= b+n$ yields
\[  I =\int _0 ^1 db \cdot  \sum _{n\in \mathbb Z}    \int _D \frac{\Im \tau }{(\Re \tau - (b+n)) ^2 + (\Im \tau )^2} \nu_{b+n}(d\tau) .\]
The equivariance relation $(z+n)_* \nu_b = \nu_{b+n}$ gives 
\[  \int _D \frac{\Im \tau }{(\Re \tau - (b+n)) ^2 + (\Im \tau )^2} (z+n)_* \nu_b(d\tau)  = \int _{D-n}  \frac{\Im \tau }{(\Re \tau - a) ^2 + (\Im \tau )^2} \nu_a(d\tau) \]
where $D-n = \{ \tau -n \ |\ \tau \in D\}$, and we conclude that
\[  I = \int_0 ^1 da \cdot \int_{\Im \tau \geq 1} \frac{\Im \tau }{(\Re \tau - a) ^2 + (\Im \tau )^2} \nu_a(d\tau)  \leq 1 \]
since $\nu_a$ is a probability measure on $\mathbb P^1$ and $\ \frac{\Im \tau }{(\Re \tau - a) ^2 + (\Im \tau )^2} \leq 1$ if $\Im \tau \geq 1$. The proof is complete.
\end{proof}

\begin{rmk}\label{r:measurable conjugacy}
Identify $\widetilde{X}$ with $\mathbb H$ via a biholomorphism. For any $\mathbb P^1$-structure $\sigma$, the map $\Phi$ can be used to construct a measurable map $M_{\sigma_{ \rm Fuchs}} \rightarrow M_\sigma$   mapping biholomorphically every leaf of $\mathcal F_{\sigma_{ \rm Fuchs}}$ to a leaf of $\mathcal F_\sigma$. At the level of the universal covers, this map is simply given by  $(\tau, z) \mapsto (\tau , \Phi(z))$. Observe furthermore, that the normalized current $T_{\sigma_{ \rm Fuchs}}$ is mapped 
 to  $T_\sigma$ (indeed, this holds for the fiber harmonic measures). If in addition 
  the holonomy is faithful with discrete image, Remark \ref{r:measurable isomorphism} shows that the foliations $(M_{\sigma_{ \rm Fuchs}}, \mathcal F_{\sigma_{ \rm Fuchs}}, T_{\sigma_{ \rm Fuchs}})$ and $(M_\sigma, \mathcal F_\sigma, T_\sigma)$  are actually measurably conjugated. 
\end{rmk}

\subsection{Foliated harmonic measures: Garnett's theory} \label{ss:garnett} %\note{J'ai chang\'e la terminologie ici}
In this paragraph we briefly review Garnett's theory of foliated Brownian motion \cite{garnett} (see also \cite{candel}), and adapt it to our non compact situation.
Let us define the normalized measure  
\begin{equation} \label{eq:harmonic measure} \mu = \frac{1}{\text{vol} (X) } \text{vol}_P \wedge T  .\end{equation}
This measure is a \textit{harmonic measure in the sense of Garnett}, namely
 it satisfies the equation $\Delta_{\mathcal F} \mu = 0$ in the weak sense, here $\Delta_{\mathcal F}$ 
 is the leafwise laplacian relative to the leafwise Poincar\'e metric. (This is immediate from the fact that $T$ itself is harmonic.) We will refer to such measure as \textit{foliated harmonic measures}.

Let $\Pi = \{ \Pi_t \}_{t\geq 0}$ be the Markov semi-group of operators acting on $C^0 _c (M_\sigma)$, 
  whose infinitesimal generator is $\Delta_{\mathcal F}$. It is convenient to  consider it at the level of the universal cover 
$\widetilde{X} \times \mathbb P^1$.  There it expresses as 
\begin{equation}\label{eq:markov operator} \Pi_t f(x,z) = \int _{\widetilde{X}} p(x,y,t) f(y,z) \text{vol}(dy) \end{equation}
 where $p(x,y,t)$ is the fundamental solution  of the heat equation $\frac{\partial}{\partial t} =  \Delta_{\rm Poin}$ 
 on the hyperbolic plane. Then, since $\mu$ satisfies $\Delta_{\mathcal F} \mu = 0$, it is invariant under the semi-group 
 $\Pi$. 

The following is essentially a reformulation of Proposition \ref{p:extension}. 
 The proof will be left to the reader.

\begin{prop} \label{p:ergodic}
The measure $\mu$ is the only normalized foliated harmonic measure in the sense of Garnett for $\mathcal{F}$ on $M_\sigma$. 
In particular   any measurable subset of $M_\sigma$ which is saturated by $\mathcal F$ has zero or full $\mu$-measure.
\end{prop}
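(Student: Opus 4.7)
The plan is to reduce the statement to Proposition \ref{p:extension} via Garnett's bijective correspondence between directed harmonic currents and foliated harmonic measures in the sense of Garnett.

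First I would show that any normalized Garnett harmonic probability measure $\mu'$ on $M_\sigma$ produces a foliated harmonic current. In a flow box in which $\mathcal F$ is the horizontal fibration with leafwise coordinate $z$ and transverse coordinate $w$, the equation $\Delta_{\mathcal F}\mu' = 0$ disintegrates $\mu'$ along the leaves as $d\mu' = h(z,w)\,\mathrm{vol}_P(z)\otimes dm(w)$, with $h(\cdot, w)$ Poincar\'e-harmonic for $dm$-a.e.\ $w$. The current $T' := \int h(\cdot, w)\,[\mathbb D\times\{w\}]\,dm(w)$ is then directed by $\mathcal F$, harmonic, satisfies $\mathrm{vol}_P\wedge T' = \mu'$, and globalizes to a directed harmonic current on $M_\sigma$ because $\mu'$ is a probability measure and $\mathrm{vol}(X)$ is finite.

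The main obstacle will be showing that $T'$ extends as a harmonic current to $\overline{M}_\sigma$ carrying no mass on the fibers over the punctures; once this is done, Proposition \ref{p:extension} yields $T' = \lambda T$ for some $\lambda>0$, and the normalization $\mu'(M_\sigma)=1$ forces $\lambda = 1/\mathrm{vol}(X)$, hence $\mu' = \mu$. For the extension I would work in the local model $\mathbb D^* \times \mathbb P^1$ from \S\ref{ss:model}: a fundamental domain for the cusp has finite Poincar\'e volume, so the finiteness of $\mu'$ forces the vertical slices of the lifted current $\widetilde T'$ on $\mathbb H\times \mathbb P^1$ to be of finite mass, and a computation essentially identical to that of Lemma \ref{l:extension model}—replacing the reference Dirac family $\delta_{\Phi(a)}\,da$ by the measurable family of vertical slices of $\widetilde T'$, and invoking the equivariance $\nu_{\tau+1} = (z+1)_*\nu_\tau$—yields that $T'$ has finite mass on the cuspidal section $\overline s$. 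The general extension theorem \cite[Thm 5]{dee} then handles the singular points of $\overline{\mathcal F}$.

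The ergodicity clause is a standard consequence of uniqueness. If $A\subset M_\sigma$ is measurable, $\mathcal F$-saturated, and $0 < \mu(A) < 1$, then $\mathbf 1_A$ is leafwise locally constant, so that $\Delta_{\mathcal F}(\mathbf 1_A f) = \mathbf 1_A\,\Delta_{\mathcal F}f$ in the distributional sense for every test function $f$. Pairing against the harmonic $\mu$ yields $\int \Delta_{\mathcal F}f\,d(\mu\rest{A}) = 0$, so that $\mu\rest{A}/\mu(A)$ is a second normalized Garnett harmonic probability measure for $\mathcal F$ on $M_\sigma$, contradicting the uniqueness just proved.
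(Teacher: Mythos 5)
Your overall strategy --- convert a Garnett harmonic measure into a directed harmonic current via the leafwise disintegration $d\mu' = h\,\mathrm{vol}_P\otimes dm$ and then quote a uniqueness statement for such currents --- is exactly the reduction the authors have in mind when they call the proposition ``essentially a reformulation'' of Proposition \ref{p:extension}, and your ergodicity argument (the conditional measure on a saturated set is again harmonic, e.g.\ because the diffusion semigroup commutes with multiplication by the leafwise-constant function $\mathbf 1_A$) is the standard and correct one. Where you diverge is in routing the uniqueness through the compactification: you rebuild the extension of $T'$ to $\overline{M}_\sigma$ by redoing the computation of Lemma \ref{l:extension model} and then invoke Proposition \ref{p:extension}. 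This is an unnecessary detour. The uniqueness you need lives entirely on $M_\sigma$: as the first lines of the proof of Proposition \ref{p:extension} point out, a directed harmonic current on $M_\sigma$ with unit-mass vertical slices corresponds, via the discussion of \S\ref{ss:harmonic currents}, to a harmonic equivariant family of fiber probability measures, and that family is unique by Proposition \ref{p:harmonic measures}. So once $T'$ is produced and normalized, you can conclude $\mathrm{vol}(X)\,T' = T$ directly, with no cusp analysis and no appeal to the extension theorem of \cite{dee}; the compactified statement is needed elsewhere in the paper (for the cohomological pairings), not here.

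There is also a step you pass over which is needed on either route: to normalize $T'$ (and, on your route, to run the Lemma \ref{l:extension model} computation, which uses that the boundary measures $\nu_a$ in the Poisson representation have uniformly bounded mass, and to make sense of ``Proposition \ref{p:extension} yields $T'=\lambda T$''), you must know that the vertical slice masses of $T'$ are \emph{constant}, not merely finite. Finiteness of $\mu'$ only gives $\int_X \nu_x(\mathbb{P}^1)\,\mathrm{vol}(dx)<\infty$. The fix is short but should be said: lifting to $\mathbb H\times\mathbb{P}^1$, the function $\tau\mapsto \nu_\tau(\mathbb{P}^1)$ is positive and harmonic (take $\psi\equiv 1$ in the harmonic dependence) and is $\Gamma$-invariant because the holonomy acts on fibers by M\"obius maps, which preserve total mass; it therefore descends to a positive harmonic function on the finite-volume, hence recurrent, surface $X$, and so is constant. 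This is precisely what justifies the parenthetical ``one (hence all)'' in the paper's normalization of harmonic currents, and with it your rescaling $\lambda = 1/\mathrm{vol}(X)$ and the identification $\mu'=\mu$ go through.
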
 

%\begin{proof} \note{on pourrait laisser �a au lecteur, non? R}
%Indeed,  let $\mu'$ be such a measure. It decomposes locally as an integral  of harmonic volumes along the leaves, 
%so it gives rise to a family $\{ \nu' _\tau \}_{\tau \in \mathbb H} $   depending 
% harmonically on the $\tau$-variable, and 
% satisfying $\nu'_{\gamma \tau} = \rho(\gamma) \nu' _\tau$ for every $\tau \in \mathbb H$. Also, the mass  of $\nu'_\tau$
%  is a positive harmonic function which is $\pi_1(X)$-invariant, hence it is constant. 
%   In particular, Proposition  
%\ref{p:harmonic measures} shows that the families $\nu'$ is a constant times the family $\nu$. The result follows.
%%hence both $\mu^i$'s produce families $\{ \nu^i _\tau \}_{\tau \in \mathbb H} $ that depend harmonically on the $\tau$-variable, and 
%%satisfy $\nu^i_{\gamma \tau} = \rho(\gamma) \nu^i _\tau$ for every $\tau \in \mathbb H$ and every $i= 1,2$. Also, the mass $\tau \mapsto 
%%\nu^i _\tau (\mathbb P^1)$ is a positive harmonic function which is $\pi_1(X)$-invariant, hence it is constant. In particular, Proposition  
%%\ref{p:harmonic measures} shows that the families $\nu^i$ are constant times the family $\nu$. The result follows.
%%In particular, the $\mu_i$'s can be singular only if one of the measure $\mu_i$ vanishes. This proves that $\mu$ is ergodic.
%\end{proof}

Consider the Markov process on $ M_\sigma$ induced by the leafwise Brownian motion, 
with respect to the Poincar\'e metric  (recall that the Brownian motion is generated by the operator $ \Delta$). More 
precisely,  we let $\Omega^\mathcal{F}$ be the set of semi-infinite continuous paths 
$\omega : [0,\infty ) \rightarrow M_\sigma$ which are contained in a leaf of $\mathcal F$, and  
$\sigma = \{ \sigma_t \}_{t\in [0,+\infty)}$ be the shift semi-group acting on $\Omega$ by $\sigma_t (\omega) (s) = \omega (t+s)$. 
Let 
\[ W^{\mathcal F}_\mu := \int W^{\mathcal F}_x \ d\mu (x) \]
on $\Omega^\mathcal{F}$, where $W^{\mathcal F}_{x}$ is the Wiener measure on the subset $\Omega^{\mathcal F}_x$ of paths starting at $x$. We also sometimes use the Wiener space $(\Omega^X, W^X)$ of Brownian paths on $X$. 

 The following proposition is contained in \cite[\S 6]{candel}. For the sake of convenience we sketch the proof. 

\begin{prop} 
The measure $ W^{\mathcal F}_\mu$ is $\sigma$-invariant and the dynamical 
 system $(\Omega^\mathcal{F}, \sigma, W^{\mathcal F}_\mu)$ is ergodic. 
\end{prop}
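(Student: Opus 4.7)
My plan addresses the two parts of the proposition in turn: deriving $\sigma$-invariance of $W_\mu^{\mathcal{F}}$ from the stationarity of $\mu$ under the leafwise heat semi-group $\Pi_t$, and deriving ergodicity from the uniqueness of the normalized foliated harmonic measure (Proposition~\ref{p:ergodic}) via the standard correspondence between bounded shift-invariant functions on $\Omega^{\mathcal{F}}$ and bounded leafwise harmonic functions on $M_\sigma$.

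For shift-invariance, by a monotone class argument it suffices to check the identity $\int F\circ\sigma_t\,dW_\mu^{\mathcal{F}}=\int F\,dW_\mu^{\mathcal{F}}$ on cylinder functions of the form $F(\omega)=\psi_0(\omega(0))\psi_1(\omega(s_1))\cdots\psi_k(\omega(s_k))$. Iterating the Markov property of foliated Brownian motion rewrites the left-hand side as $\int \Pi_t(\varphi)\,d\mu$, where $\varphi$ is a bounded function built from the $\psi_i$'s via the heat semi-group; since $\Delta_{\mathcal{F}}\mu=0$, i.e.\ $\Pi_t^*\mu=\mu$, this equals $\int\varphi\,d\mu$, which in turn agrees with the right-hand side computed by the same Markov decomposition applied to $F$ itself.

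For ergodicity, let $F\in L^\infty(W_\mu^{\mathcal{F}})$ be $\sigma$-invariant and set $\tilde F(x):=\ee_x[F]$. The identity $F\circ\sigma_t=F$ combined with the Markov property yields $\Pi_t\tilde F=\tilde F$ for every $t\geq 0$, so $\tilde F$ is bounded and leafwise harmonic. The key claim is that any such $\tilde F$ is $\mu$-a.e.\ constant; granting this, L\'evy's upward $0$-$1$ law identifies $F(\omega)=\lim_{t\to\infty}\tilde F(\omega(t))$ for $W_\mu^{\mathcal{F}}$-a.e.\ path, and therefore $F$ is itself $W_\mu^{\mathcal{F}}$-a.s.\ constant, as required. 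To establish the key claim I would exploit the formal self-adjointness of $\Delta_{\mathcal{F}}$ on $L^2(\mu)$ (a consequence of the harmonicity of $\mu$) to show that $\tilde F\,d\mu$ is again a Garnett-harmonic signed measure; decomposing $\tilde F=\tilde F^+-\tilde F^-$ and normalizing, Proposition~\ref{p:ergodic} forces both $\tilde F^\pm\mu$ to be proportional to $\mu$, hence $\tilde F$ is $\mu$-a.e.\ constant. The main obstacle is to justify rigorously the integration by parts $\int(\Delta_{\mathcal{F}}\psi)\tilde F\,d\mu=\int\psi\,\Delta_{\mathcal{F}}\tilde F\,d\mu$ in our non-compact setting; the fact that $\overline T$ carries no mass on the fibers above the punctures (Proposition~\ref{p:extension}) should allow one to localize the computation to a compact exhaustion of $M_\sigma$ and apply Garnett's standard arguments (cf.~\cite{candel}) directly.
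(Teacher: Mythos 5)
Your treatment of the invariance half is correct and is essentially the paper's argument: both boil down to the Markov property plus the $\Pi_t$-invariance of $\mu$ (the paper phrases it through the heat equation satisfied by $(t,x)\mapsto W^{\mathcal F}_x(\sigma_t^{-1}E)$, you through cylinder functions). The reduction of ergodicity to the constancy of $\tilde F(x)=\ee_x[F]$, and the final passage from "$\tilde F$ is $\mu$-a.e.\ constant" to "$F$ is a.s.\ constant" via the martingale $\ee[F\mid\mathcal F_t](\omega)=\tilde F(\omega(t))$ and L\'evy's upward theorem, are also fine.

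The gap is in your key claim. The harmonicity of $\mu$ does \emph{not} make $\Delta_{\mathcal F}$ formally self-adjoint on $L^2(\mu)$: it only says $\int\Delta_{\mathcal F}\psi\,d\mu=0$, i.e.\ that constants are fixed by the $\mu$-adjoint semigroup, not that the foliated Brownian motion is reversible with respect to $\mu$. Concretely, in a flow box where $T=\int\varphi\,[\dd\times\{w\}]\,dm(w)$, the leafwise Green formula gives, for $\tilde F$ bounded and leafwise harmonic,
\begin{equation*}
\int(\Delta_{\mathcal F}\psi)\,\tilde F\,d\mu-\int\psi\,\Delta_{\mathcal F}\tilde F\,d\mu
=2\int\psi\;\nabla_{\mathcal F}\tilde F\cdot\nabla_{\mathcal F}\log\varphi\;d\mu,
\end{equation*}
since $\Delta_{\mathcal F}(\tilde F\varphi)=2\,\nabla_{\mathcal F}\tilde F\cdot\nabla_{\mathcal F}\varphi$ when both factors are leafwise harmonic. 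The drift term $\nabla_{\mathcal F}\log\varphi$ has no reason to vanish; in this very paper its size is measured by the action $A$ of Proposition \ref{p:frankel}, which equals $1$ for the Fuchsian structure (Lemma \ref{lem:zero degree}). So the identity you planned to "justify by localization near the punctures" is false already when $X$ is compact: the obstacle is the cross term, not non-compactness, and with it the assertion that $\tilde F\,d\mu$ is Garnett-harmonic does not follow.

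The statement you are after (bounded $\Pi_t$-invariant functions are $\mu$-a.e.\ constant) is nevertheless true, by either of two routes. The paper's route: for rational $c$, $\max(\tilde F,c)$ is leafwise subharmonic, hence $\Pi_t\max(\tilde F,c)\geq\max(\tilde F,c)$ with equal $\mu$-integrals, so it is leafwise harmonic for every $c$; this forces $\tilde F$ to be constant along $\mu$-a.e.\ leaf, and one concludes with the leafwise $0$-$1$ law and the saturation statement of Proposition \ref{p:ergodic}. Alternatively, closest to your plan: since $\mu$ is $\Pi_t$-invariant, both $\Pi_t$ and its $\mu$-adjoint are contractions of $L^2(\mu)$, and $\Pi_t\tilde F=\tilde F$ forces $\Pi_t^{*}\tilde F=\tilde F$ (equality case in Cauchy--Schwarz applied to $\langle\Pi_t^*\tilde F,\tilde F\rangle=\|\tilde F\|_2^2$); then $\tilde F\mu$ is semigroup-invariant as a signed measure, its Jordan decomposition is invariant, and your appeal to uniqueness goes through. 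Replacing the self-adjointness claim by this contraction argument repairs your proof.
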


\begin{proof} 
Let us first  show that $W^{\mathcal F}_{\mu}$ is $\sigma$-invariant. 
Let $E \subset \Omega$ be a measurable subset. By the Markov property, for every $x\in M_\sigma$ and every $t\geq 0$ we have that 
\begin{equation} \label{eq:invariance} W^{\mathcal F}_x (\sigma_{t}^{-1} E) = \int_{L_x} p(x,y,t) W^{\mathcal F}_y ( E) d y . \end{equation}
Consider the   function $f:(t, x) \mapsto W^{\mathcal F}_x (\sigma_{t}^{-1} E)$. Equation \eqref{eq:invariance} shows that $f$ satisfies the heat equation, with initial condition $f(0, x) = W^{\mathcal F}_x (E)$, hence for every $t\geq 0$,  
 $f(t,.) = \Pi_t f(0,.)$. %, where $\Pi_t$ is the Markov operator \eqref{eq:markov operator}. 
 %By $\mu$-invariance of the Markov operator, we deduce 
Since $\mu$ is invariant under the heat semi-group, we deduce that 
\[  W^{\mathcal F}_\mu (\sigma^{-t} (E))=  \int _{X_\rho} f(t,x) d\mu(x) = \int _{X_\rho} f(0,x) d\mu(x) = W^{\mathcal F}_\mu (E), \]
hence proving the first part of the proposition. 

\medskip

We now prove that  $(\Omega^\mathcal{F}, \sigma, W^{\mathcal F}_\mu)$ is ergodic. Let $E$ be any $\sigma$-invariant subset. 
The function $x\mapsto f(0,x) = W^{\mathcal F}_x(E)$ is then measurable, bounded, and harmonic along $\mu$-a.e. leaf. We claim
 that it is constant. Indeed, observe that 
  for any $c\in \mathbb Q$, the function $g= \max (f,c)$ is leafwise subharmonic on a.e. leaf, so  
  we get that for every $t\geq 0$,  $\Pi_t g \geq g$ on a.e. leaf. On the other hand, $\int \Pi_t g \; d\mu = \int g \; d\mu$, 
  so we infer  that
  on a set of full measure  $\Pi_t g = g$ holds   for every rational $t\geq 0$.  This proves that $g$ is harmonic on $\mu$-a.e. leaf. This being true for every $c$, it follows that $f$ is constant  along a.e. leaf. 
Now,   since $E$ is shift invariant, belonging to $E$ is a a tail property, so
  by applying the  0-1 law \cite[Prop. 6.5]{candel}  we infer that $E$ has zero or full measure on a.e. leaf.
 Applying Proposition \ref{p:ergodic} then concludes the proof. 
 \end{proof}

\section{The degree} \label{s:degree}

In this section we introduce %and study
 the concept  of the degree of  a $\pu$-structure on $X$. 
We justify its existence in \S\ref{ss:existence of degree} by  
  proving  Proposition~\ref{defprop:degree}.  Then in \S \ref{ss:vanishing}, 
we characterize projective structures  with vanishing degree, and in \S\ref{ss:cohomological degree}
we show that it can be expressed in terms of   cohomological data.

\subsection{Existence of the degree and equidistribution of large leafwise discs}\label{ss:existence of degree} 
This subsection is devoted to the proof of  Definition-Proposition~\ref{defprop:degree}. 
Recall that we are given a developing map $\mathsf{dev}:\widetilde X\cv \pu$  
 of a parabolic $\mathbb P^1$-structure with non-elementary holonomy, and wish to show that 
 $\unsur{\mathrm{vol} (B(x_n, R_n))} \#\set{B(x_n, R_n)\cap \mathsf{dev}^{-1}(z_n)}$ 
 converges to some limit $\delta$, independent of the choices. 
 To ease notation we set  $\mathrm{vol}(R_n) = \mathrm{vol} (B(x_n, R_n))$. 
 Using the equivariance we may  assume without loss of generality that $(x_n)$ is relatively compact in $\widetilde X$. 
 Recall that the graph %$\mathrm{Graph}(\mathsf{dev})$  
 of the developing map in $\widetilde X\times \pu$
descends to a section $s$ of $M_\sigma$ transverse to $\mathcal{F}$.
 Recast in geometric language, we need to show that 
$\unsur{\mathrm{vol}(R_n)}\#(B(x_n, R_n))\times\set{z_n} ) \cap \mathrm{Graph}(\mathsf{dev})$ converges to some value $\delta$. 
  Pushing forward by $\varpi$, this amounts to proving that the geometric intersection number  
  %\note{l'int\'egrale est imbitable, on est juste en train de parler d'un nombre d'intersection, c'est bien ca n'est ce pas?}
$$\int_{M_\sigma} \varpi_*\lrpar{\frac{1}{\mathrm{vol}(R_n)} [ B(x_n,R_n)\times\set{z_n} ]}\geom [s]$$ 
%\note{$\varpi$ est la projection, n'est ce pas?} \noindent 
converges to $\delta$.
Put $\Delta(R_n) =\varpi_*\lrpar{ \frac{1}{\mathrm{vol}(R_n)} [ B(x_n,R_n)\times\set{z_n} ]}$, which is 
 a current with boundary supported in a  leaf of $\mathcal F$.
 It is perhaps useful to  stress here that $\Delta(R_n)$, may be decomposed into pieces of varying  multiplicities (according to the self-
 overlapping properties of $\varpi(B(x_n, R_n))$), and that these multiplicities are taken into account in the geometric 
 wedge product $\geom$. 
   
\medskip

The key    is the following   equidistribution result for  large leafwise discs in parabolic flat $\mathbb P^1$-bundles, which is 
 due to  Bonatti and G\'omez-Mont  \cite{bgm}, given the positivity of the foliated   Lyapunov exponent,
 a fact that was established in this generality in our previous work \cite{Bers1}. 

\begin{prop} \label{p:bgm}
Let $\rho : \pi_1 (X) \rightarrow \text{PSL} (2,\mathbb C)$ be a non elementary representation. 
 Let $(x_n)_{n\geq 0}$ 
  be a sequence in $\widetilde X$ such that $(c(x_n))_{n\cv\infty}$  is relatively compact in $X$. Let $(R_n)$ be 
  a sequence of positive real numbers tending to $+\infty$, and $(z_n)$ be any sequence of points on the Riemann sphere. 
  Then the projection in $M_\sigma$ of the sequence of integration 
   currents $\Delta(R_n)  = \varpi_*\lrpar{ \frac{1}{\mathrm{vol}(R_n)} \left[ B(x_n,R_n) \times\set{ z_n}\right]}$ converges  to $\unsur{\vol(X)}T$ 
   when $n$ tends to infinity. 
\end{prop}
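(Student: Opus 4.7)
My approach is to extract weak-$*$ cluster values of the sequence $\Delta(R_n)$ in the compactified manifold $\overline M_\sigma$ and identify each such cluster value $\Delta_\infty$ with $\frac{1}{\vol(X)}T$ via the uniqueness statement in Proposition \ref{p:extension}. Since the limit is then independent of the subsequence, the full sequence converges.

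\textbf{Extraction and base projection.} The positive currents $\Delta(R_n)$, viewed in the compact manifold $\overline M_\sigma$, have uniformly bounded mass with respect to a reference K\"ahler form, so subsequential cluster values $\Delta_\infty$ exist. A standard computation, exploiting the fact that $\pi_1(X)$ acts on $\widetilde X$ with finite co-volume and that the projections $c(x_n)$ remain in a compact set, shows that the push-forward $\pi_* \Delta(R_n)$ converges to the normalized hyperbolic area measure $\frac{1}{\vol(X)} \mathrm{vol}_X$ on $X$. In particular $\Delta_\infty$ has total mass equal to one and does not charge the fibers $\pi^{-1}(\overline X \setminus X)$ lying over the cusps.

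\textbf{Harmonicity.} The heart of the argument is to show that $\Delta_\infty$ is a directed harmonic current for $\overline{\mathcal F}$. This is exactly the content of the Bonatti--G\'omez-Mont equidistribution theorem \cite{bgm} applied to the non-singular foliation $(M_\sigma, \mathcal F)$; the crucial hypothesis required by their argument---positivity of the foliated Lyapunov exponent---is available in our setting thanks to \cite{Bers1}. The underlying strategy is ergodic-theoretic: one combines the ergodicity of the foliated harmonic measure $\mu$ (Proposition \ref{p:ergodic}) with the positivity of the Lyapunov exponent to show that, although the spherical boundary $\partial B(x_n, R_n)$ has hyperbolic length of the same exponential order as the volume of the ball, the corresponding contribution to $dd^c \Delta(R_n)$ tends weakly to zero. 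Heuristically, the positive Lyapunov exponent guarantees that the transverse information encoded in the starting point $z_n$ is ``forgotten'' along large leafwise discs, so the oscillations of the boundary terms cancel when tested against any smooth form.

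\textbf{Conclusion and main difficulty.} Given the preceding two steps, Proposition \ref{p:extension} forces $\Delta_\infty$ to be a scalar multiple of $T$, and the normalization computed from the base projection fixes the scalar to $\frac{1}{\vol(X)}$. The most delicate point in the whole argument is the harmonicity of the cluster values: a naive Stokes-type bound on $dd^c \Delta(R_n)$ produces a bounded, not vanishing, quantity, due to the exponential volume growth of hyperbolic balls. Overcoming this obstruction is precisely what the ergodic machinery of \cite{bgm}, powered by the positive Lyapunov exponent of \cite{Bers1}, is designed to achieve.
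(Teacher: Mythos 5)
Your argument is correct and rests on exactly the same two external inputs as the paper's proof --- the equidistribution theorem of Bonatti--G\'omez-Mont \cite{bgm} and the positivity of the Lyapunov exponent of the cocycle over the geodesic flow established in \cite{Bers1} --- but it is organized differently. The paper's proof is a one-line reduction: since all the currents involved are directed by $\mathcal F$, the convergence $\Delta(R_n)\cv\unsur{\vol(X)}T$ is equivalent to the convergence of the foliated measures $\varpi_*\lrpar{\unsur{\vol(R_n)}\vol\rest{B(x_n,R_n)\times\set{z_n}}}$ towards $\mu=\unsur{\vol(X)}T\wedge\vol$, and this is precisely the statement of \cite[Thm 2]{bgm}, applicable once the exponent is positive (\cite[Rmk 2.19]{Bers1}). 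You instead extract weak cluster values in $\overline M_\sigma$, check that no mass escapes into the cusp fibers via the base projection, assert harmonicity of the cluster values, and identify the limit by the uniqueness of Proposition \ref{p:extension}; the mass bound in $\overline M_\sigma$ and the cusp-fiber control are fine, and this scaffolding is a reasonable way to localize the role of the non-compactness. The one caveat is the middle step: what \cite{bgm} actually proves under the positivity hypothesis is the full equidistribution of large leafwise discs towards the harmonic measure, not the statement that weak limits of normalized ball averages are harmonic, and your heuristic about the boundary contribution to $dd^c\Delta(R_n)$ cancelling is not an argument (as you note, the naive Stokes estimate only gives a bounded, non-vanishing quantity). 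So either you invoke \cite[Thm 2]{bgm} at full strength in the parabolic finite-volume setting --- exactly as the paper does, in which case the cluster-value/uniqueness machinery becomes redundant and your proof reduces to the paper's --- or the harmonicity of cluster values is left as the genuinely hard unproved step; it is a consequence of, but not ``exactly the content of,'' the cited theorem. Granting the citation in the same generality as the paper, your conclusion stands.
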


\begin{proof} 
Let $\text{vol}$ denote  the Poincar\'e volume form along the leaves of $\mathcal F$.   Remark  that since all currents are directed by the foliation,  the convergence   $ { \Delta(R_n)}\underset{n\cv\infty}{\longrightarrow} \frac{1}{\mathrm{vol} (X)} T $ 
 is equivalent to that of $\varpi_*\lrpar{\frac{1}{\mathrm{vol}(R_n)} \text{vol} \rest{B(x_n,R_n)\times\set{z_n} }} $ towards the measure 
$\mu :=\frac{1}{\mathrm{vol} (X)} T\wedge \text{vol}$. 

By  \cite[Thm 2]{bgm},  for this it is enough to show that the top Lyapunov exponent of the cocycle induced by $\rho$ over the 
geodesic flow on $T^1X$ is positive. The representation  $\rho$ being non elementary, 
this positivity was shown in \cite[Rmk 2.19]{Bers1}. The result follows. 
\end{proof}

 We see that to prove the desired result, it is enough to show that 
\begin{equation}\label{eq:counting}
\int_{M_\sigma}  {\Delta(R_n)}\geom [s] \underset{n\cv\infty}{\longrightarrow} \frac{1}{\mathrm{vol} (X)} 
 \int_{M_\sigma} T\geom [s].
\end{equation} 
     We note 
    that it follows from the previous proposition that if $\alpha$ is any smooth form along the leaves of $\mathcal{F}$, 
 $\bra{\Delta(R_n), \alpha}$ converges to $\bra{\unsur{\vol(X)} T, \alpha}$. 
  The proof of \eqref{eq:counting}
   will be carried out in several steps. As it is common in such counting issues, 
   special attention must be paid to boundary effects.

\medskip

\textit{Step 1.} Here we prove \eqref{eq:counting} on compact subsets of $M_\sigma$.  Since $s$ is a section of $M_\sigma \cv X$, it is 
enough to test the convergence on  test functions  of the form $\pi^* \psi$, with $ \psi\in \mathcal{C}_c(X)$, 
which  we simply denote by $ \psi$, that is, we need to show that 
$\bra{\Delta(R_n)\geom [s],  \psi} \cv \frac{1}{\mathrm{vol} (X)} \bra{T\rest{s},  \psi}$. Fix $\e>0$.  
 To lighten notation, we put $B_n(R_n) = B(x_n, R_n)\times \set{z_n}$.

%Writing $B(x_n, R_n) = B(x_n, R_n-\e) 
%\cup (B(x_n, R_n)\setminus B(x_n, R_n-\e))$, we obtain a decomposition 
%\begin{align*}
%\Delta(R_n) = \Delta(R_n-\e) + A(R_n-\e, & R_n)   =\varpi_*\lrpar{ \frac{1}{\mathrm{vol}(R_n)} [ B(x_n,R_n-\e) \times z_n]} \\ &
%+\varpi_*\lrpar{ \frac{1}{\mathrm{vol}(R_n)} [(B(x_n, R_n)\setminus B(x_n, R_n-\e)) \times z_n]}
%\end{align*}

We first construct a regularization of  $ \psi[s]$. Since $s$ is transverse to $\mathcal{F}$ we can  extend 
$ \psi$ locally around $s$ to be constant along the leaves. Fix a non-negative smooth function  
$\theta_{\varepsilon} : [0,\infty) \rightarrow [0,\infty)$  with support contained in $[0,\varepsilon]$, and 
such that $\int_{\mathbb D} \theta_{\varepsilon} (d_{\rm Poin}(0,x)) \vol(dx) = 1$.
%, where $d$ is the leafwise Poincar\'e distance. 
Let now $\Delta$ be a foliated current, expressed  as $\Delta  = \int \varphi [\dd\times \set{w}] dm(w)$
in a flow box   around a point of $s$, in which $s$ corresponds to $\set{0}\times \dd$. 
%Then, 
%%abusing notation slightly (since $( \psi[S])_\e$ is not defined as a current\note{je ne comprends pas ce que tu veux dire par $( \psi[S])_\e$??}) 
%regularizing along the leaves in this flow box using $\theta_\e$   we define
%$$\Delta\geom ( \psi[s])_\e = \int \varphi (x,y)  \psi(x) \theta_\e(d_{\mathcal{F}}( (0,y),(x,y) )) \vol(dx) m(dy),$$
%where $d_{\mathcal{F}}$ is the leafwise Poincar� distance. 
Define a form along the leaves by 
$$( \psi[s])_\e = \theta_\e(d_{\mathcal{F}}(\cdot, s))\psi \vol_\mathcal{F},$$ where  $d_{\mathcal{F}}$ (resp. $\vol_{\mathcal{F}}$) is the leafwise Poincar\'e %\note{ahlala ces accents!!!} 
distance (resp. volume form). 
If $\varphi$ is continuous, then clearly 
$\Delta\wedge ( \psi[s])_\e $ is close to $\Delta\geom ( \psi[S])$ 
(when $\Delta= \Delta(R_n)$, this will happen when $\fr\Delta(R_n)$ is far from $s$).

We then write 
 \begin{align} \label{eq:Delta}
 & \int  \Delta(R_n)\geom  \psi [s] -  \frac{1}{\mathrm{vol} (X)} \int T\geom  \psi[s] = \lrpar{
\int \Delta(R_n) \wedge  ( \psi [s] )_\e- \frac{1}{\mathrm{vol} (X)} T\wedge  ( \psi [s] )_\e }  + \\ \notag
&+ \int \Delta(R_n) \geom  \psi [s] -\Delta(R_n) \wedge ( \psi [s] )_\e 
+\frac{1}{\mathrm{vol} (X)}  \int  T\wedge ( \psi [s] )_\e-T\geom  \psi [s]   
\end{align}
as a sum of three terms $ I+II+III$.  Since $( \psi [s] )_\e$ is smooth along the leaves,  
Proposition \ref{p:bgm} implies that $I$ converges to zero as $n\cv\infty$. 
Since in the representation  \eqref{eq:foliated} the density $\varphi$ of $T$ along the leaves is harmonic, the mean value formula implies that the integral $III$ vanishes.  

We will decompose the integral $II$ as a sum of two contributions.  We declare that  a  point in 
$B_n(R_n)\cap \varpi^{-1}(s)$ is a  good 
intersection if the 
 ball $B_{Poin}(p,\e)$ of radius $\e$ relative to the Poincar\'e metric is  disjoint from $\fr (B_n( R_n))$. 
Therefore, $ B_n( R_n)\cap \varpi^{-1}\supp( \psi[s])_\e$  is a union of good and bad components. Notice that bad components are 
contained in a leafwise $2\e$-neighborhood of $ \fr B_n( R_n)$. Pushing forward again by $\varpi$ we 
let $\Delta_n^{\rm bad}$ be the part of $\Delta(R_n)$ corresponding 
to bad components and $\Delta_n^{\rm good}$  be its complement (notice that $\Delta_n^{\rm good}$ is larger than the union of good 
components). Since $ \psi$ is constant along the leaves near $s$, by definition of $( \psi[s])_\e$ we get that 
$$   \int \Delta_n^{\rm good}\geom ( \psi[s]) =  \int \Delta_n^{\rm good}\wedge   ( \psi[s])_\e) .$$ To estimate the contribution of the bad part, observe that bad components of $\Delta(R_n)$ become good in $\varpi (B_n( R_n+2\e))$ as well as in the annulus 
$\varpi (B_n( R_n+2\e)\setminus B_n( R_n-2\e))$. So  we infer that 
\begin{align*}
\int \Delta_n^{\rm bad} \geom  \psi [s] &
\leq \int \unsur{\mathrm{vol}(R_n)}\lrpar{\varpi_* \left[B_n( R_n+2\e)\setminus B_n( R_n-2\e)\right]}^{\rm good} \geom  \psi[s] \\
 &=  \int \unsur{\mathrm{vol}(R_n)}\lrpar{\varpi_* \left[B_n( R_n+2\e)\setminus B_n( R_n-2\e)\right]}^{\rm good} \wedge( \psi[s])_\e \\
 &\leq  \int \unsur{\mathrm{vol}(R_n)} {\varpi_* \left[B_n( R_n+2\e)\setminus B_n( R_n-2\e)\right]} \wedge ( \psi[s])_\e\\
&\underset{n\cv\infty}{\longrightarrow} \lrpar{e^{2\e} - e^{-2\e}} \int T  \wedge ( \psi[s])_\e
 = \lrpar{e^{2\e} - e^{-2\e}} \int T \geom  \psi[s] =O(\e)
 \end{align*}
where the convergence in the last line follows from Proposition \ref{p:bgm} and the fact that $ {\mathrm{vol}(R_n+2\e)}\underset{n\cv\infty}\sim e^{2\e}{\mathrm{vol}(R_n)}$.
 We thus   conclude that the difference of integrals in \eqref{eq:Delta} is arbitrary small as $n\cv\infty$, and Step 1 is complete.

\bigskip

\textit{Step 2.} 
To show that the convergence \eqref{eq:counting} holds throughout $M_\sigma$, we work in the compactification 
$\overline M_\sigma$. Let $\mathbb P^1_p$ be the fiber of $\overline M_\sigma\cv\overline{X}$ over a puncture $p$.    
 We know from Lemma \ref{l:extension model} that the measure $\overline T\geom [\overline s]$ has finite mass. 
Since $\overline T$ carries no mass on $\mathbb{P}^1_p$,
from the local picture of $\mathcal{F}$ and $s$ given in \S\ref{ss:model}, we infer that the measure $\overline T\geom [\overline s]$ 
has no atom at $\overline  s(p)$. Therefore, to prove the desired convergence it is enough to show that the mass 
of $\Delta(R_n)\geom [s]$ near $\overline s(p)$ is uniformly small with $n$. 

%  The crucial point   is that there is an annular neighborhood $  N(p)\simeq \mathbb D^*$ of the cusp  $p$ 
%such  that the developing map is injective in any component of the preimage $c^{-1}(N(p))$ in the universal cover of $X$. 
We   use the local model for $\sigma$ near $p$.  
 Fix a  coordinate $z$ in which 
 $N(p)$ is identified to $\dd^*$ and  the projective structure   is given by $\log z$. Let $N_\e(p)  = \set{0<\abs{z}<\e}$.  
  Then any connected component of $c^{-1}(N_\e(p))$ 
 is the interior of a horocycle in $\hh$. 
 The crucial point is that  the developing map is injective in any component of $c^{-1}(N_\e(p))$. In particular  the cardinality 
 of $\mathsf{dev}^{-1} (z_n) \cap B_n(R_n) \cap c^{-1}(N_\e(p))$ 
 is bounded by the number of connected components   of $c^{-1}(N_\e(p))$ intersecting $B_n(R_n)$.  
 Now we observe that there is a universal constant $\alpha>0$ such that if $U$ is such a component,
  then the area of  $B_n(R_n +1)\cap U$ is at least $\alpha$.
  From this we infer that  
 \begin{equation}
 \label{eq:component}
   \#\set{\mathsf{dev}^{-1}(z_n) \cap B_n(R_n) \cap c^{-1}(N_\e(p))} \leq \unsur{\alpha} 
 \mathrm{vol}_\hh \lrpar{B_n(R_n + 1) \cap c^{-1}(N_\e(p))}.
 \end{equation} 
 It is well known that the image of $B_n(R_n + 1)$ under $c$ becomes asymptotically equidistributed in $X$ as $n\cv\infty$.
 This may be obtained as a consequence of Proposition \ref{p:bgm},  but it already follows from Margulis \cite{margulis these}. % or \cite{eskin mcmullen}. 
  From this and \eqref{eq:component}, we conclude that 
 $$\unsur{\mathrm{vol}(R_n)} \#\set{D^{-1}(z_n) \cap B_n(R_n) \cap c^{-1}(N_\e(p))} $$ is bounded by $C \vol_X(N_\e(p))$
 and the result follows.\qed

\begin{rmk}
This proof shows that rather than a simple number, it is more precise to view  the degree 
 as a positive measure on $X$, defined by  the formula
$\deg(\sigma) =   \pi_* (T\geom[s])$. 
%This measure gives the additional information of the degree relative to a $\pi_1(X)$-invariant subset of $\widetilde{X}$.
In particular it makes sense to speak of the degree of $\dev$ restricted to some $\pi_1(X)$-invariant subset of $\widetilde X$. 
This measure is canonically associated to the $\mathbb P^1$-structure on $X$ (that is it does not depend on the chosen developing map). 

The support of the degree measure can be described as follows. Let $(\dev , \hol)$ be a development-holonomy pair for the structure. Let $
\Lambda \subset \mathbb P^1$ be the limit set of $\hol$. The closed subset $\dev^{-1} (\Lambda)$ being invariant by $\pi_1(X)$, it defines a 
closed subset $\Lambda_\sigma \subset X$. This set is canonically associated to the projective structure and does not depend on the chosen development-holonomy pair.
An instructive example is given by  $\mathbb P^1$-structures with Fuchsian holonomy. In this case the set $\La_\sigma$ 
is a union of the boundaries of disjoint annuli embedded in $X$. It was 
 studied e.g. by Goldman to prove that such structures   are obtained from  $2\pi$-graftings, see \cite{Goldman grafting}. 
 %For such structures, this set is a union of the boundaries of disjoint annuli embedded in $X$. 
 
 We claim that the support of the degree is the 
 set $\Lambda_\sigma$. Indeed, at the level of the universal cover, the pull-back of the degree is the intersection of $\widetilde{T}$ with the 
 graph of $\dev$. In particular
 ence by the Harnack inequality, it is absolutely continuous with respect to the pull-back by $\dev$ of \textit{any} harmonic measure, 
 with density bounded from above and below by positive constants. 
 The claim then follows from the fact that the support of the harmonic measures is the limit set of $\hol$. 
 This argument shows more: namely, that the degree has the same Hausdorff dimension of that of the harmonic measures. 
 In particular, our Theorem \ref{theo:dimension} shows that the Hausdorff dimension of the degree measure is  
 always smaller than $1$ (since the equality case happens only when $\deg(\sigma)=0$). 

Theorem \ref{theo:formula} shows that the mass of the degree defines a psh function on the moduli space of $\mathbb P^1$-structures on $X$. It would be interesting to know if the degree is also psh considered as a measure. 
%\note{je propose d'enlever cette derniere remarque}

%It is interesting to note that in the neighborhood of every point of $X$, there exists a projective chart such that the degree (considered as a measure) is absolutely continuous w.r.t.  the pull-back of a harmonic measure on $\mathbb P^1$, with a density bounded above and below by positive constants. This is due to the fact that both harmonic measures and the degree measure appear respectively as the intersection of the harmonic current $T$ with respect to an algebraic curve of $M_\sigma$ transverse to the foliation. Hence, the claim follows by Harnack inequality. 
\end{rmk}

\subsection{Projective structures with vanishing degree}\label{ss:vanishing}
 Recall that $\sigma$ is a {\em covering projective structure} on $X$ if its developing map is a covering of
  some proper open subset $\om\subset\pu$. In other words, if $\sigma$ is the quotient of the orbit of a component of the discontinuity set of a Kleinian group. Such projective structures were studied e.g. by 
  Kra \cite{kra1, kra2} who showed  that a parabolic projective structure is of covering type if and only if its developing map is not surjective.% (we give a proof of this fact in the branched case in Proposition \ref{prop:vanishing branched} below). 
% In addition, the holonomy group $\mathsf{hol}_\sigma$ acts discontinuously on $\om$. Therefore by the Ahlfors finiteness theorem, $
% \mathsf{hol}_\sigma\setminus \om$ is a Riemann surface of finite type endowed with a parabolic projective structure (with possibly finitely 
% many orbifold singularities). Using the equivariance, the developing map $\widetilde{X}\cv\om$ gives rise to a branched covering $X\cv
% \mathsf{hol}_\sigma\setminus \om$. 

  \medskip 
  
Projective structures of covering type may also be  characterized in terms of their  degree. 

\begin{prop} \label{p:vanishing}
Let $X$ be a Riemann surface of finite type and 
 $\sigma$ be a parabolic projective structure on $X$. 
 Then $\mathrm{deg}(\sigma) = 0$ if and only if $\sigma$ is of covering type.
%The degree vanishes iff the projective structure is a ramified cover over one of the following projective structures:
%\begin{enumerate}
%\item a (orbifold) quotient of some component of the discontinuity set of a Kleinian group 
%\item a conformal metric of curvature $-1$ with conical angles multiples of $2\pi$.
%\end{enumerate}
%In particular, if the structure is not branched, then the degree vanishes iff the structure is a quotient (case (1)). 
\end{prop}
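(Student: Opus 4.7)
The strategy is to identify the support of the degree measure $\pi_*(T\geom[s])$ explicitly and then appeal to Kra's criterion \cite{kra1,kra2} that a parabolic $\pu$-structure on $X$ is of covering type if and only if its developing map fails to be surjective.

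For the easy direction, suppose $\sigma$ is of covering type, so that $\dev: \widetilde X \to \Omega$ is a covering of a proper open subset $\Omega \subsetneq \pu$. The image $\Omega$ is $\hol$-invariant with properly discontinuous action, hence disjoint from the limit set $\Lambda$ of $\hol$. Lifting the harmonic current $T$ to $\widetilde X \times \pu$, its support equals $\widetilde X \times \Lambda$, because each fiberwise harmonic measure $\nu_\tau$ is supported on $\Lambda$ (via the Furstenberg boundary map, cf.\ Proposition \ref{p:harmonic measures}). Since the graph of $\dev$ is contained in $\widetilde X \times \Omega$ and therefore disjoint from $\supp(\widetilde T)$, the intersection current $T\geom[s]$ vanishes and $\deg(\sigma)=0$.

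For the converse, suppose $\deg(\sigma)=0$. The key step is to show that the support of the degree measure coincides with $\Lambda_\sigma := c(\dev^{-1}(\Lambda))$. Work in the universal cover near a point $\tilde x_0\in\widetilde X$ with $\dev(\tilde x_0)\in\Lambda$. Since $\dev$ is a local biholomorphism, it maps a neighborhood $U$ of $\tilde x_0$ onto a neighborhood $V$ of $\dev(\tilde x_0)$. Because $\dev(\tilde x_0)$ lies in $\supp(\nu_\tau)=\Lambda$ for every $\tau$, and because the Harnack inequality forces the harmonic measures $\nu_{\tilde y}$ to be mutually comparable on compacts of $\widetilde X$, there is a uniform lower bound $\nu_{\tilde y}(V)\geq c>0$ for $\tilde y\in U$. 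Using the local representation \eqref{eq:foliated} of $\widetilde T$ transverse to the horizontal foliation, this translates into a strictly positive density for the intersection $\widetilde T \geom [\mathrm{Graph}(\dev)]$ on $U$, hence into a positive contribution to $\pi_*(T\geom[s])$ on a neighborhood of $c(\tilde x_0)$. Since $\deg(\sigma)=0$, no such $\tilde x_0$ can exist, so $\dev(\widetilde X)\subset \pu \setminus \Lambda$. As $\hol$ is non-elementary, $\Lambda \neq \emptyset$, so $\dev$ is not surjective, and Kra's theorem delivers the conclusion that $\sigma$ is of covering type.

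The principal obstacle is the converse direction: translating vanishing of the scalar quantity $\deg(\sigma)$ into the pointwise statement $\dev(\widetilde X)\cap\Lambda=\emptyset$, as opposed to a weaker measure-theoretic assertion. This combines two ingredients: the Harnack inequality, which ensures that the fiberwise harmonic measures assign positive mass uniformly to open sets meeting their common support $\Lambda$; and the local injectivity of $\dev$, which converts a single visit of the developing map to $\Lambda$ into a set of strictly positive intersection with $T$.
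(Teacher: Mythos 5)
Your proof is correct and follows essentially the same route as the paper: both directions reduce to the observation that $\deg(\sigma)=0$ exactly when the developing image is disjoint from the common support of the harmonic measures (the limit set of $\hol$), combined with Kra's characterization of covering structures by non-surjectivity of $\dev$. The Harnack-plus-local-injectivity argument you spell out for the converse is precisely what the paper invokes (more tersely) via the proof of Definition-Proposition \ref{defprop:degree} and the remark identifying the support of the degree measure with $\Lambda_\sigma$.
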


\begin{proof} The proof of  Definition-Proposition~\ref{defprop:degree} shows that the degree vanishes if and only if the support of 
the foliated harmonic current $T$ is disjoint from $s$. Equivalently, the image of the developing map is disjoint from the support of the harmonic measures. Hence the developing map is not surjective and the result follows from the remarks preceding the proposition.
\end{proof}

\subsection{Cohomological expression of the degree}\label{ss:cohomological degree}

Observe that the harmonic current $\overline{T}$  on $\overline M_\sigma$ 
naturally defines an element of the dual of the Bott-Chern cohomology group 
$H^{1,1}_{\partial \overline{\partial}} (\overline{M}_\sigma, \mathbb C)$. By the $\partial \overline{\partial}$-lemma, 
the natural map $ H_{\fr\overline \fr}^{1,1} (\overline{M}_\sigma,\mathbb C) 
\rightarrow H_{\overline{\partial}} ^{1,1} (\overline{M}_\sigma, \mathbb C)$ with values in the Dolbeaut cohomology group is an isomorphism. 
Thus by duality, 
the current $\overline{T}$ defines a cohomology class $[\overline{T}]$ in $H^{1,1} (\overline{M}_\sigma,\mathbb C)$. 
In more concrete terms, if $\alpha_1$ 
and $\alpha_2$ are smooth (1,1) forms defining  the same class $[\alpha]$ in $H^{1,1} 
(\overline{M}_\sigma,\mathbb C)$, then $\alpha_1 - \alpha_2 = dd^c u$ 
for some smooth function $u$, and we get that  $\bra{\overline T, \alpha_1} = \bra{\overline
T, \alpha_2}$. So it makes sense to speak about the pairing
% $[\overline T]\cdot [\alpha]$. To avoid confusion with integration currents, 
between $\overline T$ and $\alpha$ which we simply denote it  by $\overline T\cdot \alpha$. 
Observe also that any
 curve $C\subset M_\sigma$ admits a class in $H^{1,1}(M_\sigma)$, which is the cohomology class dual to the cycle $C$ 
 (or equivalently, that of the integration current $[\overline s]$).  
 
\medskip

Recall from \S\ref{ss:existence of degree} 
that $\deg(\sigma)$ is the mass of $\overline T\geom{\overline s}$. 
The next result --presumably  part of the folklore--
 asserts that this geometric intersection number    can be computed in cohomology. 
%We denote by $\overline s$ the cohomology class dual to $s$ (or equivalently, that of the integration current $[\overline s]$). 
 
\begin{prop} \label{p:cohomological expression of the degree}
Let $\sigma$ be a parabolic projective structure on a Riemann surface of finite type, 
and $\deg(\sigma)$ be be its degree, as defined in Definition \ref{defprop:degree}. Then   
$$ \deg(\sigma)  = \overline{T} \cdot \overline{s}.$$
\end{prop}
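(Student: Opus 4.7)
The plan is to represent $[\overline{s}]$ by a smooth Poincaré dual form and compute the pairing $\overline{T} \cdot \overline{s}$ by slicing. Since $\overline{M}_\sigma$ is Kähler, $H^{1,1}_{BC}(\overline{M}_\sigma)$ agrees with the usual Dolbeault and de Rham $(1,1)$-cohomology, so we may fix a smooth closed $(1,1)$-form $\alpha$ with $[\alpha] = [\overline{s}]$ and write $\overline{T} \cdot \overline{s} = \int_{\overline{M}_\sigma} \overline{T} \wedge \alpha$. This quantity only depends on $[\alpha]$: if $\alpha' = \alpha + dd^c u$ for a smooth function $u$, then $\int \overline{T} \wedge dd^c u = \langle dd^c \overline{T}, u\rangle = 0$ since $\overline{T}$ is $\partial\overline\partial$-closed.

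Next I would construct a family of smooth Thom-type representatives $\alpha_\epsilon$ of $[\overline{s}]$ supported in the $\epsilon$-tube $V_\epsilon$ around $\overline{s}$. Locally in flow-box coordinates $(x,z)$ where $\overline{\mathcal F}$ is horizontal and $\overline{s}$ is the graph $\{z = g(x)\}$ of an immersion, one takes $\alpha_\epsilon = dd^c(\chi_\epsilon(|z-g(x)|^2))$ with a suitable cutoff $\chi_\epsilon$, glued globally by a partition of unity so that the cohomology class is preserved. By the first paragraph, $\int \overline{T}\wedge \alpha_\epsilon = \overline{T}\cdot\overline{s}$ for every $\epsilon>0$.

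The main step is then to show that $\int \overline{T}\wedge \alpha_\epsilon \to \deg(\sigma)$ as $\epsilon\to 0$. Away from $\mathrm{sing}(\overline{\mathcal F})$ one can cover $\overline{s}$ by flow boxes where $\overline{T} = \int \varphi(\cdot, w)[L_w]\, dm(w)$ with $\varphi(\cdot,w)$ harmonic on the leaf $L_w$; then
\[
\int \overline{T}\wedge \alpha_\epsilon = \int\!\left(\int_{L_w} \varphi(\cdot,w)\,\alpha_\epsilon\right) dm(w),
\]
and the inner integral, which has total mass equal to the transverse intersection number $L_w\cdot\overline{s} = 1$ and concentrates on the intersection point $L_w\cap\overline{s}$, converges to $\varphi(L_w\cap\overline{s},w)$ by the mean-value property (or continuity) of harmonic functions. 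This produces exactly the density of $\overline{T}\geom[\overline{s}]$ against $m$.

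The delicate step will be handling the neighborhoods of the singular fibers $\mathbb P^1_p$ over the punctures, through which $\overline{s}$ passes, so the flow-box description breaks down at $\overline{s}(p)$. The idea is to work in the local model of \S\ref{ss:model}, choose $\alpha_\epsilon$ adapted to the coordinates $(u,v)$ with $\overline{s} = \{v = 0\}$, and bound the contribution of a shrinking neighborhood $N_\eta(\overline{s}(p))$ uniformly in $\epsilon$. The explicit Fubini computation performed in the proof of Lemma \ref{l:extension model} gives an $\eta$-small, $\epsilon$-uniform upper bound on $\int_{N_\eta(\overline s(p))} \overline{T}\wedge \alpha_\epsilon$, matching the fact that $\overline{T}\geom[\overline{s}]$ has no atom at $\overline s(p)$. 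Letting first $\epsilon\to 0$ and then $\eta\to 0$ yields the identification $\overline{T}\cdot\overline{s} = \deg(\sigma)$.
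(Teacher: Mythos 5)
Your overall strategy -- pair $\overline T$ against a smooth closed $(1,1)$-form in the class of $[\overline s]$, slice along the leaves, and use the harmonicity of the densities $\varphi(\cdot,w)$ of $\overline T$ -- is the same as the paper's, and your reduction to $dd^c$-exact differences is the right one (crucially, $\overline T$ is only $\partial\overline\partial$-closed, not closed, so one must stay within closed $(1,1)$-forms and invoke the $\partial\overline\partial$-lemma, as you do). The genuine gap is the construction of the representatives $\alpha_\epsilon$. As written, $dd^c\lrpar{\chi_\epsilon(|z-g(x)|^2)}$ is $dd^c$ of a smooth compactly supported potential: its restriction to each plaque has total integral $0$ by Stokes, and such a form pairs to $0$ against $\overline T$, not to the intersection number; moreover, gluing local $dd^c$-exact pieces by a partition of unity does not produce a closed form, and the clause ``so that the cohomology class is preserved'' is exactly the point that needs proof. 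The correct construction is potential-theoretic: take a hermitian metric $h$ on $\mathcal O_{\overline M_\sigma}(\overline s)$ with canonical section $\sigma_s$, use Lelong--Poincar\'e $[\overline s]=\Theta_h+dd^c\log\|\sigma_s\|_h$, and modify the logarithmic potential inside an $\epsilon$-tube (e.g. $\Theta_h+dd^c\big((1-\chi)\log\|\sigma_s\|_h\big)$ if you want support in the tube). This is precisely the regularization the paper uses: one cannot regularize $[\overline s]$ within positive forms since $\overline s^2<0$, but truncating the potential is enough.

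Once the representative has this form, your limit $\epsilon\to 0$ (with its attendant leafwise mass bounds, continuity of $\varphi$, and dominated convergence) is unnecessary: in a flow box where $\overline T=\int\varphi\,[\dd\times\set{w}]\,dm(w)$ as in \eqref{eq:foliated}, the modified and unmodified potentials agree near the plaque boundary, so Green's formula and the harmonicity of $\varphi(\cdot,w)$ give the exact equality of the leafwise integrals for each fixed $\epsilon$; this is how the paper concludes in one stroke. Also, your ``delicate step'' at the punctures is a misdiagnosis: $\overline s(p)$ is a regular point of $\overline{\mathcal F}$ (in the model of \S\ref{ss:model} the singularity on $\mathbb P^1_p$ is the point $(0,\infty)$, while $\overline s(p)=(0,0)$), and by Proposition \ref{p:extension} $\overline T$ is a harmonic current near $\mathbb P^1_p$, so the flow-box computation applies verbatim at $\overline s(p)$. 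Where Lemma \ref{l:extension model} is genuinely needed is in identifying the total mass: it gives that $\overline T\geom[\overline s]$ is finite and, since $\overline T$ puts no mass on $\mathbb P^1_p$, has no atom at $\overline s(p)$, so that the mass computed on $\overline M_\sigma$ coincides with $\deg(\sigma)$ as defined on $M_\sigma$.
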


\begin{proof}
%From subsection~\ref{ss:existence of degree}, we see that $\delta$ is the mass of the restriction of $T$ to $s$; this is also the mass of 
%the restriction of $\overline{T}$ to $\overline{s}$, because by definition $\overline{T}$ does not charge the vertical fibers above the 
%punctures. So it remains to prove that the geometric intersection coincides with the intersection product in homology.  But this can be found 
%in \cite[Proposition 3.4]{derdup}. \note{in fact this is not explicitely stated as such in \cite{derdup}, but the proof of Proposition 3.4 exactly 
%gives what we want.}
The difficulty is that we cannot simply regularize the integration current $[s]$ within smooth positive forms because, 
as we will see later, $\overline s^2 <0$. Pick a smooth closed $(1,1)$ form cohomologous to 
$[\overline s]$, and write $[\overline s]=\alpha + dd^cu$, where $u$ is a quasi-psh function, smooth outside $\overline s$,  
 with logarithmic singularities along $s$. 
Then by definition, $\overline T \cdot \overline s  = \bra{T, \alpha}$. 
Recall that $s$ stays far from the singularities of the foliation  $\overline {\mathcal F}$ and is everywhere transverse to it. 
Consider a tubular neighborhood ${N}_\e$ of $\overline s$, such that if $p\in \overline s$ and 
$L_p$ is the leaf through $p$, then $L_p\cap N_\e$ is a small disk about $p$, contained in a flow box. 
We modify $u$ by replacing it inside $N_\e$ by any 
smooth function $u_\e$  such that  $u = u_\e$ near $\fr N_\e$.  We denote by $u_\e$ the resulting function on $\overline M_\sigma$. By construction, $[\overline s]_\e:= \alpha + dd^cu_\e$ is a smooth form cohomologous to $[\overline s]$, so 
$  \overline T\cdot  [\overline s] =\bra{ \overline T,   [\overline s]_\e} $. 

Now consider a flow box $\bb$ endowed with  local coordinates  $(z,w)\in \dd^2$  where $\overline{\mathcal{F}}$ becomes the horizontal foliation and $\overline s$ is a vertical graph. %$z= \psi(w)$. 
Then  in this flow box,  $[\overline s] = dd^c v $ for some psh function $v$ and 
$[\overline s]_\e = dd^cv_\e$ with $v=v_\e$ in a neighborhood of 
 $\fr \dd\times \dd$. With  notation as in \eqref{eq:foliated}, we see that the 
local contribution of $\bra{ \overline T,   [\overline s]_\e} $ is equal to 
$$\bra{ \overline T,   [\overline s]_\e}\rest{\bb}  =  \int \lrpar{ \int_{\dd\times \set{w} }  \varphi dd^c v_\e } dm(w)  =   
\int \lrpar{\int_{\dd\times \set{w} }  \varphi dd^c v } dm(w)   = T\geom \overline s \rest{\bb}, $$
where the middle equality follows from the Green formula and the harmonicity of $\varphi$. The result follows.
\end{proof}

%\subsection{When the holonomy is elementary}

%A representation with values in $\text{PSL} (2,\mathbb C)$  can be elementary in two ways~: 

%\begin{itemize} 
%\item either up to conjugation it takes values in the group $\text{SU} (2)$ without having any finite orbit, or
%\item  its image stabilizes a subset of $\mathbb P^1$ consisting of one or two points. 
%\end{itemize}

%In the first case, there is unicity of a harmonic current on $M_\sigma$, as in the non elementary case. The degree can then be defined, if we relax the definition by taking Nevanlinna characteristic, instead of degree on large balls. Develop. This is an interesting problem of proving that the degree is well-defined in this situation too.

%In the second case, we don't know if the degree can be defined. List some questions. E.g. translation surfaces, affine structures on curves etc... 

\section{The Lyapunov exponent} \label{sec:lyapunov}

In this section we relate the exponent $\chi$ defined in Definition \ref{def:lyap} to a foliated Lyapunov exponent introduced by the first author in \cite[Appendice]{deroin levi plate}. This leads in     \S\ref{ss:cohomological exponent} 
to a cohomological formula for $\chi$ analogous to that obtained for  the degree. 

\subsection{The foliated Lyapunov exponent}
 Using    \cite[Proposition 2.2]{Bers1}, we start by introducing  a Lipschitz family of spherical 
 metrics on $M_\sigma$, simpy denoted by 
 $\norm{\cdot}$. By this, we mean a
  smooth family of  conformal metrics of curvature $+1$ on the fibers, with the property that there exists $C>0$ such that 
  for every  smooth path $\omega: [0, 1]\cv X$, 
 $\log \norm {D h_\rho(\omega)}_\infty \leq C \mathrm{length}(\omega)$, where $h_\rho(\omega)$ is the holonomy of $\omega$ and 
 $\norm {D h_\rho(\omega)}_\infty$ is the supremum of 
 the norm of the fiber derivative   relative to  the    spherical metrics 
 on $\pi^{-1}(\omega(0))$ and $\pi^{-1}(\omega(1))$. We will recall some details of the construction below in \ref{ss:cohomological exponent}. 
 More generally, the notation $\mathrm{length}(\omega) $
 will stand for  the {\em homotopic length} of $\omega$, that is, the
 minimal length of a smooth path homotopic to $\omega$ with fixed endpoints. In particular this notion makes perfect
 sense for a Brownian sample path. 
 
   Since $\mathcal{F}$ is transverse to the fibers,   this
  induces a smooth metric on the normal bundle $N_{\mathcal F}$. 
  Later on we will study  the extension properties of $\norm{\cdot}$ to a singular metric on the fibers of $\overline M_\sigma$.
  
 Notice that in our situation, the data of a Brownian sample path along a leaf is equivalent to that of its projection on $X$, together with its starting point in the initial fiber. So if the starting point $x$ is given, the projection $\pi$ gives an identification between 
  $W^{\mathcal{F}}_x$ and $W_{\pi(x)}^X$.  
In this way we can speak of the holonomy, or homotopic length  of a leafwise 
 Brownian path, by simply projecting it  to $X$.  %\note{j'ai ajout� des $\mathcal{F}$ aux espaces de Wiener feuillet�s pour �viter les confusions. }
 %For every $\omega \in \Omega$, and every $t\geq 0$, we let $h_t = h(\omega\rest{[0,t]})$ be 
 %the holonomy map associated to $\omega\rest{[0,t]}$.  
 
 We now consider the family 
of functions  
$$\Omega^\mathcal{F}\ni\omega\longmapsto K_t ({\omega}) = \log \norm{D _{\omega(0)} h (\omega\rest{[0,t] }) }.$$ This 
is a cocycle, in the sense that  $K_{t+s} (\omega) = K_s(\omega) + K_t (\sigma_s \omega)$ for every $t,s\geq 0$. 
As explained above, the estimate    
$$  { K_t({\omega}) } \leq C \cdot \text{length} (\omega\rest{[0,t]}) ,$$
holds, for some $C$ is independent of $\omega$. 
%$\text{length} (\omega_{|[0,t]})$ denotes  the minimal length in $X$ 
%of a smooth path homotopic to $\pi(\omega\rest{[0,t]})$ with fixed endpoints. 
The superexponential decay of the heat kernel on the hyperbolic plane  \cite[\S 5.7]{davies} then implies 
 that $K_t$ is $W^{\mathcal F}_\mu$-integrable for every $t\geq 0$.  The ergodic theorem shows that for 
$W^{\mathcal F}_\mu$-almost every path $ {\omega}$ the limit $\la = \lim_{t\cv\infty}\frac{K_t ({\omega}) } {t}$ exists and does not depend on $\omega$. By definition $\la$ is the {\em foliated Lyapunov exponent}. 

\medskip

We can now compare $\la$ and $\chi$. 

\begin{prop}\label{p:comparison exponents} 
Let $\sigma$ be a parabolic projective structure on a Riemann surface of finite type. 
Let $\chi(\sigma) = \chi_{\rm Brown}(\mathsf{hol}_\sigma)$ be the Lyapunov exponent of $\sigma$, as defined in \S \ref{ss:lyapunov}. Then if $\la$ is as above we have
$\lambda  = -2 \chi (\sigma) $.  \end{prop}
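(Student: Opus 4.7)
Proof plan: I would exploit the fact that in the canonical trivialization of the universal cover $\widetilde X\times \pu$ the flat holonomy is the identity on the fibers, so the cocycle $K_t$ reduces to a conformal change of spherical metrics, which is in turn governed by the matrix norms of $\hol$.

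\textbf{Step 1: express $K_t$ as a conformal cocycle.} Fix the family of spherical metrics $\norm{\cdot}$ on the fibers of $M_\sigma$ provided by \cite[Prop. 2.2]{Bers1}, and lift it to $\widetilde X\times \pu$. Let $\norm{\cdot}_0$ be the constant spherical metric on the factor $\pu$. Both metrics are conformal and of curvature $+1$, so their ratio on $\widetilde X\times \pu$ is $e^{\rho(\tilde x,z)}$ for some smooth $\rho$. The $\pi_1(X)$-equivariance of $\norm{\cdot}$ reads
\[ \rho(\tilde x,z)-\rho(\gamma \tilde x,\hol(\gamma)z)=\log \abs{D\hol(\gamma)(z)}_{sph,0} \qquad \forall \gamma\in \pi_1(X),\ \forall z\in \pu. \]
Given a leafwise path $\omega$ with $\omega(0)=(\tilde\omega(0),z_0)$, the flat transport along $\omega\rest{[0,t]}$ is the identity in the canonical trivialization; in the non-canonical metric the log of the fiber derivative is just the conformal correction, hence
\[ K_t(\omega)=\rho(\tilde\omega(t),z_0)-\rho(\tilde\omega(0),z_0). \]

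\textbf{Step 2: compare with the closed-loop holonomy.} Pick a relatively compact fundamental domain $F\subset \widetilde X$ containing $\tilde\omega(0)$, and let $\gamma_t\in \pi_1(X)$ be such that $\tilde\omega(t)\in \gamma_t F$. Applying the cocycle identity with $\tilde x=\gamma_t^{-1}\tilde\omega(t)\in F$ and $\gamma=\gamma_t$ gives
\[ \rho(\tilde\omega(t),z_0)=\rho(\gamma_t^{-1}\tilde\omega(t),\hol(\gamma_t^{-1})z_0)+\log \abs{D\hol(\gamma_t^{-1})(z_0)}_{sph,0}. \]
On $F\times \pu$ the function $\rho$ is bounded (in the non-compact case one must additionally estimate $\rho$ along cusp excursions using the local model of \S\ref{ss:model}; by Birkhoff applied to $W^{\mathcal F}_\mu$ the time spent in cusp neighborhoods is negligible). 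Consequently
\[ K_t(\omega)=\log \abs{D\hol(\gamma_t^{-1})(z_0)}_{sph,0}+O(1). \]
Moreover $\gamma_t$ differs from the homotopy class $[\widetilde \o_t]\in \pi_1(X)$ by a uniformly bounded group element, because the shortest geodesic closure realises $\min_\gamma d(\tilde\omega(t),\gamma\tilde\omega(0))$ while our $\gamma_t$ achieves this minimum up to $\diam(F)$; therefore $\log\norm{\hol(\gamma_t)}=\log\norm{\hol([\widetilde\omega_t])}+O(1)$.

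\textbf{Step 3: spherical derivative vs. operator norm.} For every $A\in \SL$ and every $z\in \pu=\mathbb P^1$ a direct computation gives
\[ \abs{A'(z)}_{sph,0}=\frac{\norm{(z,1)}^2}{\norm{A(z,1)}^2}, \]
so that $\log \abs{A'(z)}_{sph,0}=-2\log\norm{A(z,1)}+2\log\norm{(z,1)}$. Applying this with $A=\hol(\gamma_t^{-1})$, I need $\norm{A(z_0,1)}/\norm{A}$ to be bounded away from $0$ (on a set of density $1$ in $t$) so that $\log\abs{A'(z_0)}_{sph,0}+2\log\norm{A}=o(t)$. This is the key probabilistic input: for $W^{\mathcal F}_\mu$-a.e.\ path, $z_0$ is distributed according to the fiber harmonic measure $\nu_{\tilde\omega(0)}$, which by Proposition \ref{p:harmonic measures} is the unique stationary measure of the random walk; by Furstenberg's theorem the stable direction of $\hol(\gamma_t^{-1})$ becomes (asymptotically) equidistributed according to a non-atomic measure on $\pu$, independent of $z_0$, so $z_0$ avoids this direction by a definite angle. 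Combining:
\[ K_t(\omega)=-2\log \norm{\hol([\widetilde\omega_t])}+o(t). \]

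\textbf{Step 4: conclusion.} Dividing by $t$, letting $t\to \infty$ and using $\tfrac{1}{t}\log\norm{\hol(\widetilde \omega_t)}\to \chi(\sigma)$ yields $\lambda=-2\chi(\sigma)$, as desired.

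The main obstacle will be the probabilistic justification in Step 3: one must show that, $W^{\mathcal F}_\mu$-a.s., the starting fiber point $z_0$ stays a definite spherical distance away from the stable direction of the random Möbius element $\hol(\gamma_t^{-1})$. This combines the non-atomicity of the Furstenberg stationary measure for the non-elementary random walk induced by $\hol$ with the ergodicity of $(\Omega^{\mathcal F},\sigma,W^{\mathcal F}_\mu)$ established previously, and is where the hypothesis of non-elementariness of the holonomy is really used. The secondary technical point is the cusp control of $\rho$ needed in Step 2, which is handled via the explicit local model \eqref{eq:model} together with Brownian recurrence on $X$.
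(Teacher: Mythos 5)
Your Steps 1--2 are a sound reformulation of what the paper actually does: writing the Lipschitz family as a conformal factor times the round metric in the canonical trivialization, the cocycle $K_t$ becomes, up to error terms, $-2\log\norm{\widetilde h_t Z_0}+O(1)$, where $\widetilde h_t$ is an $\SL$ lift of the holonomy of $\omega\rest{[0,t]}$ read in a fundamental-domain trivialization (your $\hol(\gamma_t^{-1})$); and your comparison of $\gamma_t$ with $[\widetilde\omega_t]$ in effect re-proves the equivalence between the closed-loop exponent of Definition \ref{def:lyap} and the open-path matrix cocycle, which the paper simply quotes from \cite[Prop.~2.5]{Bers1}. One caveat in Step 2: in the non-compact case what must be $o(t)$ is the value of your conformal factor at the single point $\gamma_t^{-1}\tilde\omega(t)$, i.e.\ the cusp depth of $\omega(t)$ at time exactly $t$; a Birkhoff bound on the occupation time of cusp neighborhoods does not control this. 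It is fixable (the Lipschitz property bounds the factor linearly in the cusp depth, and the depth at time $t$ is a.s.\ sublinear, indeed logarithmic), but your stated justification aims at the wrong quantity.

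The genuine gap is Step 3, which you yourself flag: the argument "the stable direction of $\hol(\gamma_t^{-1})$ equidistributes according to a non-atomic measure, so $z_0$ avoids it by a definite angle" is a non sequitur -- equidistribution of the contracted direction gives no lower bound on its distance to a fixed $z_0$, not even along a density-one set of times. What is true, and what the paper establishes as Lemma \ref{l:technical step}, is an Oseledets statement: since $\hol_\sigma$ is non elementary the top exponent of the $\SL$-cocycle $\widetilde h_t$ is positive (by \cite{Bers1}), so for a.e.\ path there is a line $E(\omega)\subset\cc^2$ with $\frac1t\log\norm{\widetilde h_t Y}\to\chi$ uniformly on compact subsets of $\cc^2\setminus E(\omega)$; equivalently the most contracted singular direction of $\widetilde h_t$ \emph{converges} a.s.\ to $E(\omega)$. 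The law of $\mathbb{P}E(\omega)$ is then identified with the (non-atomic) harmonic measure $\nu_x$ via the unique ergodicity of Proposition \ref{p:ergodic}, and since $E(\omega)$ depends only on the base path it is conditionally independent of $z_0$; hence $z_0\neq \mathbb{P}E(\omega)$ a.s., which is exactly the avoidance you need to get $\log\norm{\widetilde h_t Z_0}=\log\norm{\widetilde h_t}+O(1)$ for large $t$. So your plan follows the same route as the paper (spherical-derivative identity plus avoidance of an exceptional direction), but the probabilistic step on which everything hinges is left open as written; replacing the equidistribution heuristic by the Oseledets/unique-ergodicity argument of Lemma \ref{l:technical step} is what closes it.
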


 The proof relies on the following result:

\begin{lem}\label{l:technical step} Assume that $\rho$ is non elementary. Then for every $x\in X$ and $W^X_x$-a.e. $\omega: [0,\infty) \rightarrow X$ starting at $x$, there exists $r( \omega) \in \mathbb P^1 _x$ such that the pointwise convergence
\begin{equation}\label{eq:norm} \lim_{t\rightarrow \infty} \frac{1}{t} \log \norm{ D_y h (\omega\rest{[0,t]}) } 
\rightarrow -2\chi_{\rm Brown}(\rho)  \end{equation} holds
uniformly on compact subsets of $\mathbb P^1 _x \setminus \{r(\omega) \}$. Moreover, the distribution  of the exceptional 
point $r(\omega)$ is the harmonic measure $\nu_x$ on $\mathbb P^1_x$. \end{lem}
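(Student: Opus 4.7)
The plan is to combine the Multiplicative Ergodic Theorem (MET) applied to the holonomy cocycle of leafwise Brownian motion with the classical north--south contraction dynamics of elements of $\SL$ acting on $\pu$, and then to identify the resulting exceptional direction as the Furstenberg boundary of the Brownian motion.

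First I would apply the MET to the cocycle $(\omega,t)\mapsto h_t(\omega) := h(\omega\rest{[0,t]})$ valued in $\SL$ (with a measurable lift from $\PSL$, which is available in the setting of $\mathbb P^1$-structures, cf.\ \cite{gkm}). Ergodicity of $(\Omega^{\mathcal F},\sigma,W^{\mathcal F}_\mu)$ and integrability of $K_t$ established immediately above make the ergodic theorem applicable. Since the cocycle is valued in $\SL$, the two Lyapunov exponents sum to zero, and by \cite{Bers1} the top one is strictly positive and equal to $\chi=\chi_{\rm Brown}(\rho)$; hence they are $\pm\chi$. The MET then provides, for $W^{\mathcal F}_\mu$-a.e.\ $\omega$, an Oseledets splitting $\cc^2 = E^u_\omega \oplus E^s_\omega$ with $\frac{1}{t}\log\norm{h_t v}\to \pm\chi$ on $E^{u/s}_\omega\setminus\{0\}$, and I set $r(\omega) := [E^u_\omega] \in \pu$, the projective class of the most expanded direction of $h_t$ in the source fibre.

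The second step is to translate this MET output into the desired spherical derivative asymptotics. Writing $h_t = U_t D_t V_t$ in Cartan $KAK$ form with $U_t, V_t \in \mathrm{SU}(2)$ (which act as spherical isometries on $\pu$) and $D_t = \mathrm{diag}(\sigma_t, \sigma_t^{-1})$, $\sigma_t = \norm{h_t}$, a direct computation for the diagonal factor gives $\norm{D_y h_t}_{{\rm sph}} = \sigma_t^{-2}(1 + O(\sigma_t^{-4} d_{{\rm sph}}(y, r_t)^{-2}))$ for $y$ bounded away from $r_t := V_t^{-1}\cdot [e_1]$. Combining with $\frac{1}{t}\log\sigma_t \to \chi$ and the MET convergence $r_t \to r(\omega)$ (which uses the spectral gap $\chi > -\chi$), we obtain $\frac{1}{t}\log \norm{D_y h_t}_{{\rm sph}} \to -2\chi$ uniformly on compact subsets of $\pu\setminus\{r(\omega)\}$.

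The third and most delicate step is to identify the distribution $\eta_x$ of $r(\omega)$ under $W^X_x$ with $\nu_x$. The cocycle relation $h_t(\gamma\omega)=\rho(\gamma)h_t(\omega)$ immediately gives the equivariance $\eta_{\gamma x}=\rho(\gamma)_*\eta_x$. Since $r(\omega)$ depends on $\omega$ only through its asymptotic tail, the mean-value argument used at the end of the proof of Proposition~\ref{p:harmonic measures} applies at the first exit time from a small ball, showing that $x\mapsto \eta_x$ is harmonic. Uniqueness in Proposition~\ref{p:harmonic measures} then forces $\eta_x = \nu_x$. The main obstacle I anticipate is upgrading the MET conclusion from a $\mu$-a.e.\ claim on $M_\sigma$ to a $W^X_x$-a.s.\ claim for \emph{every} $x \in X$, and correspondingly establishing the harmonic dependence of $\eta_x$ on $x$ in a uniform manner; this should be resolved by combining the disintegration $W^{\mathcal F}_\mu = \int W^{\mathcal F}_y\,d\mu(y)$ with the fibrewise symmetry of the MET output and the tail nature of $r(\omega)$, together with the observation that the Oseledets direction at time $0$ is the pull-back under $h_s$ of the Oseledets direction at time $s$, which propagates the a.e.\ statement across fibres.
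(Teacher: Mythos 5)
Your overall strategy -- lift the holonomy cocycle to $\SL$, use the positivity of $\chi$ from \cite{Bers1} together with the Oseledets theorem, translate via the spherical-derivative formula, and identify the law of the exceptional point through equivariance, harmonic dependence and uniqueness in Proposition \ref{p:harmonic measures} -- is essentially the paper's (the paper identifies the law via unique ergodicity of the foliated harmonic measure, a minor variation). But you misidentify the exceptional point, and this is a genuine error, not a labelling issue. The spherical derivative satisfies $\norm{D_y h_t}_s=\norm{Y}_2^2/\norm{\widetilde{h_t}Y}_2^2$ for a lift $Y$ of $y$ (this is \eqref{eq:spherical derivative}); it is therefore of order $e^{-2\chi t}$ exactly at those $y$ whose lifts are \emph{expanded} by the cocycle, and it blows up at rate $e^{+2\chi t}$ at the direction \emph{contracted} in $\cc^2$. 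Hence the exceptional point is $r(\omega)=\mathbb{P}E^s(\omega)$, the Oseledets line with exponent $-\chi$ (equivalently $\lim_t V_t^{-1}\cdot[e_2]$ in your $KAK$ notation), and not $[E^u_\omega]=\lim_t V_t^{-1}\cdot[e_1]$ as you set it. With your choice, the estimate of your second step is false: at $y=V_t^{-1}\cdot[e_2]$, which lies at maximal spherical distance from $V_t^{-1}\cdot[e_1]$, one has $\norm{D_yh_t}_s=\norm{h_t}^2$, so the convergence in \eqref{eq:norm} is not even pointwise, let alone uniform, on compact subsets of $\pu\setminus\{[E^u_\omega]\}$.

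The error propagates to your third step. The point whose distribution is $\nu_x$ is $\mathbb{P}E^s(\omega)$: the slow line is flat-equivariant, $E^s(\omega)=\widetilde{h_s}(\omega)^{-1}E^s(\sigma_s\omega)$, hence tail-measurable in the flat trivialization, which is precisely what makes your strong-Markov/mean-value argument work (equivalently, $(h_t)^{-1}_*\nu_{\omega(t)}$ is a martingale whose a.s.\ limit is $\delta_{\mathbb{P}E^s(\omega)}$), after which uniqueness in Proposition \ref{p:harmonic measures} applies. By contrast, for a one-sided-time cocycle the multiplicative ergodic theorem yields only a filtration, so $E^u_\omega$ is not canonically defined; if you realize it as the orthogonal complement of $E^s_\omega$ for the chosen fibre metrics, it is not preserved by the flat transport, the resulting family of laws is not harmonic in the sense of (i) of Proposition \ref{p:harmonic measures}, and its law is the image of $\nu_x$ under a metric-dependent involution rather than $\nu_x$ itself. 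Once $E^u$ is replaced by $E^s$ throughout, your argument coincides with the paper's; the remaining point you flag (passing from a $W^{\mathcal F}_\mu$-a.e.\ statement to every $x$) is handled there by invoking \cite[Prop. 2.5]{Bers1}, which holds for every starting point, combined with the absolute continuity of the time-one distribution and the Markov property.
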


\begin{proof} 
%We first introduce some notations. Let $W = \int W_x dx$ be the Wiener measure on the set $\Omega_X $ of paths $\omega: 
%[0,\infty)\rightarrow S$, and $\sigma_X $ the shift acting on $\Omega_X $. 
In order to apply the Oseledets theorem, 
consider a measurable trivialization $M_\sigma \simeq X \times \mathbb P^1$ 
%in such a way that the Lipschitz metric on $M_\sigma$ is the canonical (constant) spherical metric $\norm{\cdot}_s$ 
%on $\mathbb P^1$. \note{bizarre. pas clair que �a ne pose pas de pb. RD} 
 and set $\chi = \chi_{\rm Brown}(\rho)$. 
For every continuous $\omega$, and every $t> 0$, the map $h_t = h_{\omega\rest{[0,t]}} : \mathbb P^1_{\omega(0)} \rightarrow \mathbb 
P^1 _{\omega(t)}$ can be lifted to a matrix $\widetilde{h_t}$ in $\text{SL}(2,\mathbb C)$ which is well defined up to sign. The family     $
\widetilde{h} = \{ \widetilde{h_t} \}_{t\geq 0}$ on $\Omega$ is a cocyle modulo signs,  namely it satisfies $\widetilde{h_{t+s}} (\omega) = \pm \widetilde{h_t} (\sigma _s(\omega)) \widetilde{h_s} (\omega)$ for 
every $\omega \in \Omega$ and every $s,t\geq 0$.  Moreover, from  \cite[Proposition 2.5]{Bers1}, we have that for every $x\in X$ and 
$W^X_x$ a.e. $\omega$,
\[  \lim _{t\rightarrow \infty} \frac{1}{t} \log \norm {\widetilde{h_t}}  = \chi ,\] 
%where $\norm{\lrpar{\begin{smallmatrix}a&b\\c&d \end{smallmatrix}}}_2 = (\abs{a}^2+\abs{b}^2+\abs{c}^2+\abs{d}^2)^{1/2}$.
where $\norm{\cdot}$ is the matrix norm to the usual hermitian norm $\norm{\cdot}_2$ on $\cd$. 
Since  $\rho$ is non elementary, by \cite[Thm 2.7]{Bers1}, $\chi>0$. Since in addition 
   $h$ takes values in $\text{SL}(2,\mathbb C)$, the Lyapunov exponents of $h$ over $(\Omega_X , \sigma_X , W^X)$ 
   are $\chi $ and $-\chi$. The  Oseledets theorem  tells us  that for $W$-a.e. $\omega: [0,\infty) \rightarrow X$, there exists a complex line $E= E(\omega) \subset \mathbb C^2$ such that for every $Y\in \mathbb C^2$, $Y\neq 0$, 
      $ \frac{1}{t} \log \norm{\widetilde{h_t} (Y)}_2 $ converge to $-\chi$ as $t\cv\infty$ when $Y\in E$,  
      while this quantity 
       converges uniformly to $\chi$ on compact subsets of $\mathbb C^2 \setminus E$.  
   Finally, we observe that for the usual spherical derivative, we have
   % the following formula for the spherical derivative of the maps 
   %$h_t : \mathbb P^1_{\omega(0)} \rightarrow \mathbb P^1 _{\omega(t)}$ 
   that 
\begin{equation} \label{eq:spherical derivative} \norm {D h_t (y) }_s    = \frac{\norm{Y}_2^2}{\norm{\widetilde{h_t}(Y)}_2^2}, \text{ where $Y$ is a lift of $y$},\end{equation} 
hence   \eqref{eq:norm} holds, 
with $r(\omega) = \mathbb P E(\omega) \in %\mathbb  P(\mathbb C^2) \simeq 
\mathbb P^1_{\omega(0)}$. 

It remains to show  that the distribution of $r$ when $\omega$ is  conditioned  to start at $x$ is the harmonic measure $\nu_x$. 
For this, we consider  the  mapping $\omega^\mathcal{F}\ni\omega \mapsto r(\omega)$, which is defined $W_\mu^\mathcal{F}$-a.e.
%We conclude by applying Proposition \ref{p:harmonic measures}. 
 The push-forward of $W_\mu^\mathcal{F}$  is a shift invariant measure on $M_\sigma$, so we conclude 
 by the  unique ergodicity of $\mathcal{F}$ (Proposition \ref{p:ergodic}).
\end{proof}

\begin{proof}[Proof of  Proposition \ref{p:comparison exponents}] 
Let ${x}\in M_\sigma$. % and put  $\pi(\tilde x) = x$. % be a point of $\mathbb P^1 _x$. 
%The measured sets $(\Omega _ {x}, W_x)$ and $(\Omega_{\tilde{x}}, W^{\mathcal F}_{\tilde{x}})$ are in $1-1$ correspondance via lifting. We will identify them in the sequel. 
As observed before, we can identify $(\Omega^X _ {\pi(x)}, W^X_{\pi(x)})$ in $X$ and 
$(\Omega^{\mathcal F}_{ {x}}, W^{\mathcal F}_{ {x}})$ in $M_\sigma$ by 
 lifting. 
Since the harmonic measure $\nu_{\pi(x)}$ on $\mathbb P^1_{\pi(x)}$ has no atoms, we infer that 
for $W_{{x}}^\mathcal{F}$ a.e. $\omega$, the 
point $r(\omega)$ defined in  Lemma \ref{l:technical step} is distinct from ${x}$. 
Hence $ \lim_{t\rightarrow \infty} \frac{1}{t} \log \norm{ D_{x} h (\omega\rest{[0,t] }) }  \rightarrow -2\chi$
for $W_{{x}}$-a.e. $\omega$,
 and  the conclusion follows. 
\end{proof}

\subsection{Cohomological expression of $\chi$}\label{ss:cohomological exponent}
Let  $P$ be the set of punctures of $X$. To avoid confusion with the Lyapunov exponent, we denote by $\mathrm{eu}(X)$ the Euler 
characteristic of $X$, $\mathrm{eu}(X)  = 2-2g -\# P$. %\note{on pourrait remplacer tout de bout en bout par le volume, �a �viterait une notation. R Pourquoi pas mais dans le calcul en cohomologie on a besoin de faire intervenir la caract\'eristique d'Euler de toute fa\c{c}on, et l\`a ce serait un peu bizarre d'utiliser $V(X)/2\pi$ \`a la place. B} 
Recall   
  the Gauss-Bonnet formula  $\vol(X) = 2\pi \abs{\mathrm{eu}(X)}$. 

 In this section, we begin the proof of the following result, which will be complete only after proving Theorem \ref{theo:formula}

\begin{prop} \label{p:computation of Lyapunov exponent}
% $\chi =\frac{\pi}{2\vol(X)}  ( N_{\overline{\mathcal F}} \cdot \overline{T} + \#P ) = 
Let $\sigma$ be a parabolic projective structure on a Riemann surface of finite type with puncture set $P$. Then 
$\chi (\sigma)  = \displaystyle \frac{1}{2\abs{\mathrm{eu}(X)} } ( N_{\overline{\mathcal F}} \cdot \overline{T} +\#P)$.
\end{prop}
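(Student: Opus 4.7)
The natural plan is to combine Proposition \ref{p:comparison exponents} --- which gives $\chi(\sigma) = -\lambda/2$ --- with a cohomological computation of the foliated Lyapunov exponent $\lambda$ on the compactification $\overline{M}_\sigma$, following the strategy pioneered in \cite{deroin levi plate}. The spherical metric on the fibers of $\pi : M_\sigma \to X$ induces, via the natural identification of $N_{\mathcal{F}}$ with the fiberwise tangent bundle, a smooth Hermitian metric $\norm{\cdot}$ on the normal bundle $N_{\mathcal{F}}$; the cocycle $K_t(\omega)$ from \S\ref{sec:lyapunov} is then precisely the log-norm growth of the Bott holonomy of $N_{\mathcal{F}}$ along leafwise Brownian paths measured against $\norm{\cdot}$.

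The first step is to express $\lambda$ as an integral of a leafwise Laplacian. Combining the It\^o/martingale decomposition with the $W_\mu^{\mathcal{F}}$-integrability of $K_t$ established earlier in the section yields
\[ \lambda = \int_{M_\sigma} \Delta_{\mathcal{F}}\log\norm{\cdot}\, d\mu = \frac{1}{\vol(X)}\int_{M_\sigma} \Delta_{\mathcal{F}}\log\norm{\cdot}\; T\wedge \vol_P, \]
the function $\Delta_{\mathcal{F}}\log\norm{\cdot}$ being well defined on all of $M_\sigma$ because $N_{\mathcal{F}}$ has holomorphic transition functions. The second step is to rewrite this in terms of the first Chern form of $(N_{\mathcal{F}},\norm{\cdot})$: the standard identity $\Delta_{\mathcal{F}} u\cdot\vol_P = c_0\, dd^c_{\mathcal{F}} u$ (with a universal constant $c_0$ set by the normalizations of $dd^c$ and the hyperbolic Laplacian) together with the definition of $c_1(N_{\mathcal{F}},\norm{\cdot})$ transforms the above into
\[ \lambda = -\frac{c_0}{\vol(X)}\int_{M_\sigma} c_1(N_{\mathcal{F}},\norm{\cdot})\wedge T, \]
and the Gauss--Bonnet identity $\vol(X) = 2\pi\abs{\mathrm{eu}(X)}$ absorbs $c_0$ into the factor $1/\abs{\mathrm{eu}(X)}$. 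When $X$ is compact, the right-hand side is just the cohomological pairing $N_{\overline{\mathcal{F}}}\cdot\overline{T}$, recovering the formula with $\#P = 0$.

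The third and most delicate step is the boundary analysis at each puncture. The spherical metric $\norm{\cdot}$ does \emph{not} extend as a smooth Hermitian metric on $N_{\overline{\mathcal{F}}}$: in the local coordinates $(u,v)$ of the model \eqref{eq:model}, the discrepancy between $\norm{\cdot}$ and any smooth Hermitian extension of $N_{\overline{\mathcal{F}}}$ concentrates along $\overline{s}(p) = (0,0)$ with a logarithmic-type singularity. Evaluating this extra singular contribution to $\int c_1(N_{\mathcal{F}},\norm{\cdot})\wedge T$ using the local structure of $\overline{T}$ from Proposition \ref{p:extension} and Lemma \ref{l:extension model} should produce a residue of magnitude exactly $1$ at each puncture, accounting for the $\#P$ correction.

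The main obstacle is precisely this residue computation: pinning down both the sign and the magnitude of the boundary contribution requires a careful local analysis combining the model foliation, the asymptotics of the spherical metric, and the concentration of $\overline{T}$ near $\overline{s}$. As the paper remarks, the cleanest way to fix this normalization constant is to evaluate the formula on the Fuchsian structure $\sigma_{\rm Fuchs}$, for which $\chi = 1/2$ is known and for which Theorem \ref{theo:formula} provides the necessary consistency check --- which is why the proof is only completed after that theorem.
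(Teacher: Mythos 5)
Your route is the paper's own: reduce to the foliated exponent via Proposition \ref{p:comparison exponents}, express it as the curvature integral $\frac{\pi}{\vol(X)}\int\Theta\wedge T$ (Lemma \ref{l:computation lyapunov exponent}), and correct for the cusps by comparing the Lipschitz family of spherical metrics with a smooth metric on $N_{\overline{\mathcal F}}$. The genuine gap is at the decisive step. Writing the cusp contribution as an index $I(T,p)=\int\frac{1}{i\pi}\partial\overline{\partial}\Psi\wedge T$ with $\Psi=\log\frac{\norm{\cdot}}{\norm{\cdot}_s}$ near the exceptional fiber, the quantity you need is not automatically a ``normalization constant'': a priori it depends on the harmonic current $T$ near the cusp, hence on $\sigma$ itself. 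Calibrating on $\sigma_{\rm Fuchs}$ (where $\delta=0$ and $\chi=\tfrac12$) then only determines the index for the Fuchsian current, not for a general structure. The whole point of the paper's Step 5 (Proposition \ref{p:invariance}) is precisely to prove that $I(T)$ takes the same value on \emph{all} normalized harmonic currents of the local model $\mathcal F_m$; this is done by exhibiting the symmetries $H_x$ and $V_c$ induced by $(\tau,z)\mapsto(\tau+x,z)$ and $(\tau,z)\mapsto(\tau,z+c)$, proving $I$ is invariant under them (a nontrivial cut-off argument near $u=0$ using the Lipschitz bound $\abs{d_{\mathcal F}\Gamma}\leq\frac{|du|}{|u|\log|u|}$ and the harmonicity of $T$), and decomposing an arbitrary current of the model into extremal pieces $T_{(a_0,z_0)}$ which these symmetries permute transitively. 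This yields the intermediate statement (Proposition \ref{p:weaker}) that the puncture term is a universal constant $I$, and only then does the Fuchsian evaluation, combined with the cohomological bookkeeping of Theorem \ref{theo:formula}, give $I=1$. Without this invariance your calibration argument does not apply to general $\sigma$, and the asserted ``residue of magnitude exactly $1$'' is unsupported.

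Two further points. First, the singular discrepancy does not concentrate at the single point $\overline s(p)$: in the $(u,V)$-coordinates one has $\frac{\norm{\cdot}}{\norm{\cdot}_s}=\frac{2\pi}{|u|\log(1/|u|)}\,\Phi(u,V)$ with $\Phi$ continuous away from $(0,0)$, so the metric degenerates along the whole fiber over the puncture, and one must first prove that the index integral converges at all; this is Lemma \ref{l:convergent}, which uses that the leafwise curvature of the Lipschitz metric is dominated by the Poincar\'e metric, itself $T$-integrable because $T$ projects to the integration current on the base. Second, passing from the curvature integral to the cohomological pairing $N_{\overline{\mathcal F}}\cdot\overline T$ plus puncture terms requires introducing a genuinely smooth metric on $N_{\overline{\mathcal F}}$ (e.g.\ $\norm{\cdot}_s=|u|\frac{|dv|}{1+|v|^2}$ near the cusp) and tracking the correction through $\Psi$, as in Lemma \ref{l:lyapunov exponent 1}; your sketch conflates this with the compact-case identity, where no such comparison is needed.
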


When $X$ is compact ($P = \emptyset$)
 this result follows from the  cohomological formula derived in \cite[Appendice A]{deroin levi plate} for the foliated Lyapunov exponent, and from Proposition \ref{p:comparison exponents}. The  proof in the non compact case follows the same strategy %as in \cite{der}, 
 but serious technical difficulties arise from the parabolic cusps. 

Recall that if $X$ is a complex surface, $E\rightarrow X$ is a holomorphic line bundle, and $\norm{\cdot}$ is a hermitian metric on $E$, its curvature form is defined by $\Theta(\norm{\cdot}) = \frac{1}{2 i \pi }\partial \overline{\partial} \log \norm{s }^2 $, where $s$ is any non vanishing local holomorphic section of $E$. 
In our situation we choose  
a Lipschitz family of spherical metrics on the fibers of $M_\sigma$, which, since $\mathcal{F}$ is transverse to the fibers, 
 induces   a  hermitian metric on the normal bundle $N_{\mathcal F}$. 
Recall that the value of the Lyapunov exponent does not depend on this choice.   
We denote by $\Theta$ the curvature form of this metric.

The first result is obtained exactly as in the compact case \cite[Appendice A]{deroin levi plate} (see also \cite[\S 8]{candel}).

\begin{lem}\label{l:computation lyapunov exponent}
$\displaystyle \chi (\sigma) = \frac{\pi}{\vol (X)} \int \Theta \wedge T$.
\end{lem}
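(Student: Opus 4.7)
The plan is to apply Proposition \ref{p:comparison exponents}, which gives $\chi(\sigma) = -\lambda/2$, and then compute $\lambda$ by an It\^o--Birkhoff argument, following the compact-case approach of \cite[Appendice A]{deroin levi plate}. Fix a sample path $\omega$ of leafwise Brownian motion, and choose a locally defined flat holomorphic section $v$ of the normal bundle $N_{\mathcal F}$ along an initial segment of $\omega$. Because the derivative of the holonomy acts on flat sections by $v(\omega(t)) = D h(\omega\rest{[0,t]}) (v(\omega(0)))$, and because $N_{\mathcal F}$ has rank one, taking norms yields
\[ K_t(\omega) = \tfrac12 \bigl( \log\norm{v(\omega(t))}^2 - \log\norm{v(\omega(0))}^2 \bigr). \]
Two flat holomorphic sections differ by a constant along each leaf, so the leafwise function $\phi := \tfrac12 \Delta_{\mathcal F}\log\norm{v}^2$ is in fact a well-defined smooth global function on $M_\sigma$, independent of the choice of $v$.

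Applying It\^o's formula (recall our convention that Brownian motion has generator $\Delta$) along a leafwise sample path gives
\[ K_t(\omega) = \tfrac12 N_t(\omega) + \int_0^t \phi(\omega(s))\, ds, \]
where $N_t$ is a martingale. Since $\log\norm{v}^2$ has bounded leafwise gradient (this is exactly the Lipschitz assumption on the family of spherical metrics), the quadratic variation of $N_t$ grows at most linearly, so $N_t/t \to 0$ almost surely. The ergodic theorem for the shift $(\sigma_t)$ on $(\Omega^{\mathcal F}, W^{\mathcal F}_\mu)$, whose ergodicity was established in the previous subsection, then yields
\[ \lambda = \lim_{t\to\infty} \frac{K_t(\omega)}{t} = \int_{M_\sigma} \phi\, d\mu \]
for $W^{\mathcal F}_\mu$-a.e.\ $\omega$, integrability of $\phi$ being ensured by its boundedness.

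To convert this into a cohomological pairing, combine the convention $\Theta = \frac{1}{2i\pi}\partial\overline\partial \log\norm{v}^2$ with the elementary identity $dd^c u = \frac{1}{2\pi}(\Delta_{\mathcal F} u)\, \text{vol}_P$ for the leafwise Poincar\'e metric to get $\phi \cdot \text{vol}_P = -2\pi\, \Theta\rest{\mathcal F}$ as leafwise $(1,1)$ forms. Substituting into $\mu = \tfrac{1}{\vol(X)}\, T \wedge \text{vol}_P$ gives $\int \phi\, d\mu = -\frac{2\pi}{\vol(X)}\int \Theta \wedge T$, and combining with $\chi = -\lambda/2$ yields the claimed formula. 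The main delicacy to watch is the possible non-compactness of $M_\sigma$; it does not cause trouble at this stage because $\phi$ and $\Theta$ are globally defined and bounded, and $\mu$ is a probability measure. The genuine non-compact obstacle, which is handled in the subsequent Proposition \ref{p:computation of Lyapunov exponent}, is to evaluate $\int \Theta \wedge T$ as a Bott--Chern pairing on the singular compactification $\overline M_\sigma$.
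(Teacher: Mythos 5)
Your argument is correct and lands on the same two pillars as the paper's proof: the reduction $\chi=-\lambda/2$ via Proposition \ref{p:comparison exponents}, and the conformal identity $\Delta_{\mathcal F}\log\norm{v}\cdot\vol_P=-2\pi\,\Theta\rest{T\mathcal F}$ converting $\int\phi\,d\mu$ into $-\tfrac{2\pi}{\vol(X)}\int\Theta\wedge T$. Where you diverge is the middle step, the identity $\lambda=\int\phi\,d\mu$: you obtain it pathwise, by an It\^o decomposition $K_t=\text{martingale}+\int_0^t\phi(\omega(s))\,ds$, killing the martingale term with the gradient bound coming from the Lipschitz property and then invoking the Birkhoff theorem for the ergodic shift on $(\Omega^{\mathcal F},W^{\mathcal F}_\mu)$. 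The paper instead exploits the cocycle property to note that $t\mapsto\int K_t\,dW^{\mathcal F}_\mu$ is linear, identifies $\lambda$ with its derivative at $t=0$, and computes that derivative via the heat semigroup, which produces $\int\Delta_{\mathcal F}\log\norm{v}\,d\mu$ directly. The two routes are equally standard Furstenberg-formula arguments; yours is more probabilistic and makes the martingale error term explicit, while the paper's avoids It\^o calculus and the strong law for martingales at the cost of an exchange of derivative and integral.

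One point deserves more care than you give it: the boundedness of $\phi=\tfrac12\Delta_{\mathcal F}\log\norm{v}^2$, which you use both for the Birkhoff step and to declare the non-compact case harmless. The Lipschitz hypothesis on the family of spherical metrics only bounds the leafwise \emph{gradient} of $\log\norm{v}$ (which is what your martingale estimate needs); it says nothing about the leafwise Laplacian. On the compact part of $M_\sigma$ boundedness is automatic, but near the fibers over the punctures the metric $\norm{\cdot}$ degenerates, and the bound on $\phi$ is exactly the content of the computation in Lemma \ref{l:convergent}: in the $(\tau,z)$-coordinates the leafwise curvature of $\norm{\cdot}$ is dominated in modulus by the leafwise Poincar\'e form. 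So the claim is true, but you should either cite that computation or reproduce it; as written, the parenthetical appeal to the Lipschitz assumption does not cover it.
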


\begin{proof} 
 We keep notation as in \S\ref{ss:lyapunov}.  From the fact that $K$ is a cocycle, 
 we deduce that the function $t\mapsto \int K_t(\omega) W^\mathcal{F}_\mu(d\omega)$ is linear. 
 So its    slope is equal to its derivative at 0, and we get that 
\begin{equation} \label{eq: formula lyapunov exponent} \lambda= \frac{d}{dt} \Bigl\vert_{t=0} 
\int K_t (  {\omega} )  W_\mu^\mathcal{F}(d\omega) 
= \int _{M_\sigma} \frac{d}{dt}\Bigl\vert_{t=0} \mathbb E^{ {x}} \big( K_t ( {\omega} ) \big) d\mu ({x}) . \end{equation}
Let $x_0$ be a point of $X$. We use  local coordinates $x=(\xi, \eta)$, 
to parametrize points in $M_\sigma$ via $s(\xi, \eta)$, that is,  ${x}$ belongs to the fiber of $\xi$, $\eta$ 
belongs to a neighborhood of $\eta_0$ in $\mathbb P^1_\xi$.
and $s(\xi, \eta) = h _{\xi, \eta} ({x})$ is the flat section passing through the point ${x}$, 
%in the fiber of $\xi$ and 
defined over a neighborhood of $\xi_0$ in $X$.  Using the heat equation, the formula~\eqref{eq: formula lyapunov exponent} can be written in these coordinates %\note{pas encore hyper clair...}
\[ \lambda = \int _{X_\rho} \Delta_{\xi} \log \norm{\frac{\fr}{\fr \eta} h_{\xi,\eta}  ({x})}\ \mu ( d{x}) \] 
Observe that the curvature form $\Theta$ of the Lipschitz metric on $N_{\mathcal F}$, restricted to the tangent bundle of $\mathcal F$, is given by the expression 
$$\Theta \rest{T\mathcal F}  =  \frac{1}{2 i \pi }\partial \overline{\partial}_\mathcal{F} \log \norm{\frac{\fr}{\fr \eta}   h_{\xi,\eta}  ({x} )}^2 .$$ 
Because we have $\Delta_{\rm Poin} f \cdot \text{vol} _{\rm Poin} = 2 i \partial \overline{\partial} f$ for every function $f$ defined on the hyperbolic plane, we infer that 
\[ \Delta_{\xi} \log \norm{ \frac{\fr}{\fr \eta}   h_{\xi,\eta}  ({x} )} \text{vol} _{\rm Poin}   = - 2\pi\ \Theta\rest{T\mathcal F} . \]
Using the fact that $T \wedge \text{vol}_{\rm Poin} = \text{vol(X)} \mu$, we finally obtain 
\[  \lambda = -\frac{2\pi}{\text{vol} (X)}  \int \Theta \wedge T, \]
which,  together with Proposition~\ref{p:comparison exponents}  finishes the proof of Lemma \ref{l:computation lyapunov exponent}. \end{proof}

When  $X$ is compact, it immediately follows from Lemma~\ref{l:computation lyapunov exponent} 
%and the Gauss-Bonnet formula  
 that  
\begin{equation} \label{eq:cohomological formula compact case} \chi =\frac{\pi} {\text{vol}(X)} T\cdot N_{\mathcal F} = 
\frac{1}{2\abs{\mathrm{eu}(X)}}   N_{ {\mathcal F}} \cdot  {T}, \end{equation}
and the proof of Proposition \ref{p:computation of Lyapunov exponent} is complete. 

In the general case, however, this calculation is no longer valid, and  in the remaining part of the argument we need to 
understand  the contribution of the punctures to this formula. For the moment, we content ourselves with the following weakening of
Proposition \ref{p:computation of Lyapunov exponent}. 

\begin{prop}\label{p:weaker}
Under the assumptions of Proposition \ref{p:computation of Lyapunov exponent}, there exists  a universal constant $I$ such that 
$\chi (\sigma)  = \displaystyle \frac{1}{2\abs{\mathrm{eu}(X)} } ( N_{\overline{\mathcal F}} \cdot \overline{T} + I \cdot \#P)$.
\end{prop}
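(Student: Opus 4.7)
Combining Lemma~\ref{l:computation lyapunov exponent} with the Gauss--Bonnet formula $\vol(X)=2\pi\abs{\mathrm{eu}(X)}$ we immediately obtain $\chi(\sigma)=\frac{1}{2\abs{\mathrm{eu}(X)}}\int_{M_\sigma}\Theta\wedge T$, so the proposition reduces to establishing the identity
$$\int_{M_\sigma}\Theta\wedge T \;=\; N_{\overline{\mathcal F}}\cdot\overline T \;+\; I\cdot\#P,$$
for some constant $I$ depending only on the parabolic local model $\mathcal F_m$ of \S\ref{ss:model}, and not on $\sigma$, $X$, or $p$.

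The strategy will be to compare our metric to a smooth reference. Choose any smooth hermitian metric on the holomorphic line bundle $N_{\overline{\mathcal F}}\to\overline M_\sigma$, with curvature $\Theta_0$, so that by definition of the cohomological pairing $N_{\overline{\mathcal F}}\cdot\overline T=\int_{\overline M_\sigma}\Theta_0\wedge\overline T$. Over $M_\sigma$, the Lipschitz spherical metric used in the previous subsection differs from this reference metric by a positive factor $e^{\varphi}$, hence $\Theta-\Theta_0=dd^c\varphi$. The key point will be that in a neighborhood of each fiber $\pu_p$ lying over a puncture, $\varphi$ admits a universal asymptotic expansion read directly from the change of variables \eqref{eq:identification}, $u=e^{2\pi i\tau}$, $v=z-\tau$: the fiberwise spherical metric comes from the flat $z$-coordinate whereas the bundle coordinate is $v$, and this forces a precise logarithmic blow-up of $\varphi$ along $\pu_p$ that depends only on $\mathcal F_m$.

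The computation will then proceed by exhausting $X$ via compact subsurfaces $X_\varepsilon=X\setminus\bigcup_{p\in P}N_\varepsilon(p)$ and writing
$$\int_{\pi^{-1}(X_\varepsilon)}\Theta\wedge T-\int_{\pi^{-1}(X_\varepsilon)}\Theta_0\wedge T = \int_{\pi^{-1}(X_\varepsilon)}dd^c\varphi\wedge T.$$
Since $T$ is $dd^c$-closed, Stokes' theorem rewrites the right-hand side as a sum of boundary integrals, one around each fiber $\pu_p$. By Proposition~\ref{p:extension}, $\overline T$ carries no mass on the singular fibers, so as $\varepsilon\to 0$ the left-hand side converges to $\int_{M_\sigma}\Theta\wedge T-N_{\overline{\mathcal F}}\cdot\overline T$. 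On the right, the universal nature of the local geometry forces each cuspidal contribution to converge to the same number $I_p=I$, giving exactly $I\cdot\#P$ and yielding the proposition.

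The main obstacle will be making the $\varepsilon\to 0$ limit in the boundary terms rigorous, which requires the simultaneous control of (i) the logarithmic singular behavior of $\varphi$ along $\pu_p$ extracted from \eqref{eq:identification}, (ii) the mass of $T$ on thin tubular neighborhoods of $\pu_p$, for which the finite-mass statement of Lemma~\ref{l:extension model} together with Proposition~\ref{p:extension} is essential, and (iii) the fact that the ambient spherical metric is only Lipschitz, so the integration by parts must be justified by a regularization argument. Once all limits exist and are finite, the universality of $I_p=I$ is tautological, since the entire local datum near a parabolic cusp --- foliation, bundle and metric --- is conjugate to the single model $\mathcal F_m$.
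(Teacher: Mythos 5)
Your reduction via Lemma~\ref{l:computation lyapunov exponent} and Gauss--Bonnet, and your general framework (compare the singular Lipschitz metric on $N_{\overline{\mathcal F}}$ with a smooth reference, localize the discrepancy near the fibers over the punctures, and extract a per-cusp contribution) is essentially the paper's Steps 1--4. But the final assertion, that ``the universality of $I_p=I$ is tautological, since the entire local datum near a parabolic cusp --- foliation, bundle and metric --- is conjugate to the single model $\mathcal F_m$'', is a genuine gap, and it is exactly where the real work lies. The cuspidal contribution is of the form $\int \frac{1}{i\pi}\partial\overline\partial\Psi\wedge T$, where $\Psi$ (the logarithm of the ratio of the two metrics) is indeed universal, but the harmonic current $T$ is \emph{not} part of the local model: its restriction to a neighborhood of the cusp depends on the global holonomy, hence on $\sigma$, and a priori also on the puncture $p$. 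Since $\partial\overline\partial\Psi$ is a genuinely singular form near the fiber (the ratio blows up like $\tfrac{2\pi}{|u|\log(1/|u|)}$, with an extra pole at $(0,0)$ in the $(u,V)$-chart), its pairing with $T$ depends on the transverse distribution of $T$, and nothing in the conjugacy to $\mathcal F_m$ forces different currents to give the same number. Your Stokes/exhaustion argument only converts the problem into boundary integrals that still involve $T$ (and $d^cT$) near the cusp; it does not remove the dependence on $T$.

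What the paper adds, and what your argument is missing, is precisely Proposition~\ref{p:invariance}: the index $I(T)$ takes the same value on \emph{all} harmonic currents directed by $\mathcal F_m$ giving mass $1$ to the fibers. This is proved by exhibiting the symmetries $H_x$ and $V_c$ of the model (coming from the translations $(\tau,z)\mapsto(\tau+x,z)$ and $(\tau,z)\mapsto(\tau,z+c)$), showing via a delicate cutoff estimate (Lemma~\ref{l:invariance}, which uses the Lipschitz bound $|d_{\mathcal F}\Gamma|\leq \frac{|du|}{|u|\log|u|}$ and the fact that $T$ projects to the integration current on $\mathbb D$) that the index is invariant under them, and then decomposing an arbitrary normalized harmonic current into extremal currents $T_{(a_0,z_0)}$ on which these symmetries act transitively. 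Only after this invariance is established does the constant $I$ become independent of $\sigma$ and of the puncture (its value $I=1$ is then pinned down in Section~\ref{sec:proof} by testing on the Fuchsian structure). Your points (i)--(iii) about log singularities, thin-tube mass, and regularization are real but secondary technicalities (handled in the paper by Lemma~\ref{l:convergent} and the curvature bound along leaves); without the invariance step your ``universal constant'' could depend on $\sigma$, and the proposition would not follow.
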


The proof occupies the remainder of this section. It will be carried out in several steps, mostly  dealing with the local study of the model foliation $\mathcal{F}_m$ introduced in \S \ref{ss:model}.

\medskip

\noindent{\em Step 1. A smooth metric.}

Let $p\in P$ be a puncture of $X$, and let us work in a neighborhood $\pi^{-1}(U(p))$
of $\pi^{-1}(p)$ in $\overline M_\sigma$,  in the coordinates
$(u,v)$ introduced in \S \ref{ss:model}.  We claim that the metric 
\begin{equation} \label{eq:smooth metric} 
\norm{\cdot}_s = |u| \frac{|dv|}{1+|v|^2}  
\end{equation} 
%extends as
defines   a smooth metric on $N_{\overline{\mathcal F}}$. To see this, observe 
that a non-vanishing holomorphic section of the normal bundle of 
$\overline{\mathcal F} = \mathcal{F}_m$ on $\mathbb D \times \mathbb C$ in the $(u,v)$-coordinates 
is defined by $n=\frac{1}{u} \frac{\partial}{\partial v}$. Indeed, $\omega = 
du + 2i \pi  u dv$ is a form defining $\overline{\mathcal F}$, and $\omega (n) = 2i\pi$. We 
see that  $\norm{n}_s = \frac{1}{1+|v|^2}$, so   $\norm{\cdot}_s$ extends smoothly along the line 
$\set{0}\times \mathbb C $. To analyse what happens close to the point $(0,\infty)$, we introduce the new coordinates 
$(u,V) = (u,\frac{1}{v})$. In these coordinates, the foliation is defined 
 by the equation $2\pi u dV + i V^2 du = 0$. A non-vanishing section of the normal bundle is then given by $n = \frac{1}{u} \frac{\partial} {\partial V}$, and a straighforward computation yields $\norm{\cdot}_s = |u| \frac{|dV|}{1+|V|^2}$. 
 Hence the situation is symmetric and we conclude that $\norm{\cdot}_s$ %extends as 
 defines a smooth metric on $N_{\overline{\mathcal F}}$, as claimed. 

\medskip

\noindent{\em Step 2. The Lipschitz metric. } 

 Here we give an explicit expression for 
 a Lipschitz family of spherical metrics on $ M_\sigma$ close to $p$. Recall that a model for the bundle $\pi^{-1}(\mathcal U(p))\subset  M_\sigma$ is the quotient of $\mathbb H \times \mathbb P^1$ by the identification 
  $(\tau, z) \sim (\tau + 1 , z + 1)$. A Lipschitz family of spherical metrics on this model is defined by
\begin{equation} \label{eq:Lipschitz spherical metric} \norm{\cdot}_{\tau} = \frac{\Im \tau \ |dz|}{|z-\Re \tau|^2 + \Im^2 \tau }  \end{equation}
It is constructed by starting with the spherical metric $\norm{\cdot}_i = \frac{|dz|}{1+|z|^2}$, which is already invariant by the stabilizer 
$\text{PSO}(2,\mathbb R)$ of the point $i$, and then by extending it by the formula $M^* \norm{\cdot}_{M\tau} = \norm{\cdot}_\tau$ for any $
\tau\in \mathbb H$ and $M\in \mathrm{PSL}(2,\mathbb R)$. The proof of \cite[Prop. 2.2]{Bers1} shows that $\norm{\cdot}_\tau$ is indeed 
Lipschitz. %\note{bof. pas g�nial....} 

The family $\{  \norm{\cdot}_\tau \}_{\tau\in \mathbb H}$ then induces a family of spherical metrics $\{  \norm{\cdot}_u \}_{u\in \mathbb D^*}$ on the quotient bundle $\simeq \mathbb D ^* \times \mathbb P^1$  which is given by the formula 
\begin{equation} \label{eq:Lipschitz spherical metric 2} \norm{\cdot}_u = \frac{\frac{1}{2\pi}\log \big( \frac{1}{|u|}\big) |dv|}
{|v+ \frac{i}{2\pi}\log \big( \frac{1}{|u|}\big) |^2 + \frac{1}{4\pi^2}\log^2 \big( \frac{1}{|u|}\big)} 
%= \frac{2\pi} {\log \big( \frac{1}{|u|}\big)} \cdot\frac{|dv|}{|\frac{2\pi v}{\log (\frac{1}{|u|})} + i| ^2 + 1}  
=  \frac{2\pi} {\log \big( \frac{1}{|u|}\big)} \cdot \frac{|dV|}{|\frac{2\pi }{\log (\frac{1}{|u|})} + iV| ^2 + |V|^2}.\end{equation}

\medskip

\noindent{\em Step 3. The induced singular metric on $N_{\overline{\mathcal F}}$.} 

The  family of spherical metrics constructed above on $ M_\sigma$ induces a metric 
 $\norm{\cdot}$ on the normal bundle of the foliation $\overline{\mathcal F}$ which possesses 
singularities along the fibers over the cusps of $X$, that we compute here. In the $(u,V)$-coordinates, we have that  
\begin{equation}\label{eq:singularities}  \frac{\norm{\cdot}}{\norm{\cdot}_s}  = \frac{2\pi }{|u| \log (\frac{1}{|u|})} \cdot \Phi(u,V)
\text{ where } \Phi(u,V) = \frac{1+ |V|^2}{ | \frac{2\pi }{\log(\frac{1}{|u|})}  + i V|^2 + |V|^2}.
 \end{equation} 
  The reader can check that $\Phi$ has a pole only at the point $(u,V)=(0,0)$,  extends continuously and extends continuously  elsewhere. 

\medskip

\noindent{\em Step 4. Defining a foliation index. } %Recall that $\mathcal F_m$ is the foliation on $\mathbb D \times \mathbb P^1 $ defined by~\eqref{eq:model}. 
For {\em any} harmonic current $T$ on $\mathbb D \times \mathbb P^1$ directed by $\mathcal F _m$, we define
\begin{equation}\label{eq:index}  I(T):= \int_{\mathbb D^* \times \mathbb P^1} \frac{1}{i\pi} \partial \overline{\partial} \Psi \wedge T \end{equation}
where $\Psi : \mathbb D^* \times \mathbb P^1 $ is a smooth function supported in 
a domain $  D_r ^* \times \mathbb P^1$ for some $0<r<1$, and such that $\Psi = \log \frac{\norm{\cdot}}{\norm{\cdot}_s}$ in a neighborhood of $0\times \mathbb P^1$. 
Observe that this number does not depend on the chosen function $\Psi$, since the current $T$ is harmonic. 

\begin{lem} \label{l:convergent}
The integral~\eqref{eq:index} is convergent.  
\end{lem}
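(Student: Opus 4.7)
The plan is to reduce convergence to a direct estimate in the universal cover via the change of variables $u = e^{2\pi i\tau}$, $v = z - \tau$, under which the leaves of $\mathcal F_m$ become the horizontal slices $\{z = \mathrm{const}\}$. Since the value of $I(T)$ is independent of the choice of $\Psi$ by the $\partial\bar\partial$-closedness of $T$, one may choose $\Psi = \eta\cdot \log(\|\cdot\|/\|\cdot\|_s)$ for a smooth cutoff $\eta$ equal to $1$ near the singular fiber $\{u = 0\}$ and vanishing for $|u|\ge r$. The cross-terms involving $\partial \eta$, $\bar\partial \eta$ and $\partial\bar\partial \eta$ are smooth and compactly supported in $\mathbb D^\ast \times \mathbb P^1$, so their contribution to the integral is manifestly finite. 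The problem thus reduces to showing integrability of $\partial\bar\partial\log(\|\cdot\|/\|\cdot\|_s)\wedge T$ on a neighborhood of $\{0\}\times \mathbb P^1$.

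Lifted to the universal cover, a direct computation combining \eqref{eq:smooth metric} with \eqref{eq:Lipschitz spherical metric 2} gives in the affine $z$-chart
\[
\Psi(\tau, z) = 2\pi\,\Im\tau + \log \Im\tau + \log\frac{1 + |z-\tau|^2}{|z-\Re\tau|^2 + (\Im\tau)^2} + \mathrm{const}.
\]
The first summand is pluriharmonic in $\tau$ and drops out under $\partial\bar\partial$; the second yields $\Delta_\tau \log\Im\tau = -1/(\Im\tau)^2$; an elementary expansion as $\Im\tau \to \infty$ shows the remaining logarithmic quotient is $O(1/\Im\tau)$ with $\tau$-Laplacian of order $O(1/(\Im\tau)^4)$, uniformly for $z$ in compact subsets of $\mathbb C$. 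A parallel computation in the chart $W = 1/(z-\tau)$ near $z = \infty$ (which corresponds to the $V$-coordinate of \S\ref{ss:model} and covers a neighborhood of the pole of $\Phi$) yields the uniform estimate
\[
\bigl|\Delta_\tau \Psi(\tau, z)\bigr| \le \frac{C}{(\Im\tau)^2}
\qquad \text{for every } z \in \mathbb P^1 \text{ and } \Im\tau \ge R_0.
\]

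Since only the $d\tau\wedge d\bar\tau$-component of $\partial\bar\partial\Psi$ survives after wedging with $\widetilde T$ along the horizontal leaves, and since the lifted harmonic current decomposes as $\widetilde T = \int_{\mathbb P^1}\varphi(\tau, z)[\mathbb H\times\{z\}]\,d\tilde m(z)$ with $\int_{\mathbb P^1}\varphi(\tau, z)\,d\tilde m(z) = \nu_\tau(\mathbb P^1) = 1$ (cf.\ Proposition~\ref{p:harmonic measures} and the proof of Lemma~\ref{l:extension model}), the absolute value of the integrand on a fundamental strip $D_0 = \{0 \le \Re\tau \le 1\}$ is bounded by
\[
|I(T)| \le \frac{C}{2\pi} \int_{D_0 \cap \{\Im\tau \ge R_0\}}\frac{d\Re\tau\,d\Im\tau}{(\Im\tau)^2} \cdot \nu_\tau(\mathbb P^1) = \frac{C}{2\pi R_0} < \infty.
\]

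\textbf{Main obstacle.} The delicate technical point is the uniform estimate $|\Delta_\tau\Psi| \le C/(\Im\tau)^2$ in the regime where $(u, V)$ approaches the pole $(0,0)$ of $\Phi$, which in the $(\tau,z)$-chart corresponds to $\Im\tau\to\infty$ with $z$ bounded. Here one must verify by explicit computation in the $W = 1/(z-\tau)$ chart that the singular contribution of $\log\Phi$, which near $(0,0)$ has the form $-\log(\alpha^2 + 2|V|^2 - 2\alpha \Im V)$ with $\alpha = 2\pi/\log(1/|u|) \asymp 1/\Im\tau$, has a $\tau$-Laplacian still controlled by $1/(\Im\tau)^2$, so that the integrability of the Poincar\'e area element $d\Re\tau\,d\Im\tau/(\Im\tau)^2$ on the cusp absorbs the remaining singularity.
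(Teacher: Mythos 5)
Your reduction by a cutoff and your explicit formula for $\Psi$ in the $(\tau,z)$-chart are correct, but the central claim — the uniform estimate $\bigl|\Delta_\tau\Psi(\tau,z)\bigr|\le C/(\Im\tau)^2$ for \emph{every} $z\in\pu$ — is false, and the concluding display collapses with it. Indeed, from your own formula,
\[
\Delta_\tau\Psi \;=\; -\frac{1}{(\Im\tau)^2}\;-\;\frac{4(\Im z)^2}{\bigl(|z-\Re\tau|^2+(\Im\tau)^2\bigr)^2}\;+\;\frac{4}{\bigl(1+|z-\tau|^2\bigr)^2},
\]
where the last term is the leafwise restriction of the curvature (the Fubini--Study form in the fiber variable $v=z-\tau$) of the smooth reference metric $\norm{\cdot}_s$ of \eqref{eq:smooth metric}. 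At $z=\tau$ this term equals $4$, so $\Delta_\tau\Psi(\tau,\tau)=4+O((\Im\tau)^{-2})$: the bound fails at points of the fiber lying at bounded distance from the section $\{v=0\}$, i.e.\ near $V=\infty$. Your ``main obstacle'' is in fact the harmless regime: when $V\to0$ with $z$ bounded one has $|z-\tau|\asymp\Im\tau$, and both non-Poincar\'e terms are $O((\Im\tau)^{-4})$, exactly as you computed.

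This is not a cosmetic slip, because with the correct majorant $\frac{2}{(\Im\tau)^2}+\frac{4}{(1+|z-\tau|^2)^2}$ the problematic contribution to your strip integral is $\int\!\!\int \frac{4\,d\nu_\tau(z)}{(1+|z-\tau|^2)^2}\,d\Re\tau\,d\Im\tau$, and the sole normalization $\nu_\tau(\pu)=1$ only yields the divergent bound $\int 4\,d\Re\tau\,d\Im\tau$ over the cusp. Its finiteness is equivalent to the finiteness of the $T$-mass of a smooth fiberwise form near the fiber $\{u=0\}$, i.e.\ to the hypothesis (built into the definition of the index, and supplied by Lemma \ref{l:extension model} and Proposition \ref{p:extension}) that $T$ has locally finite mass on all of $\mathbb D\times\pu$ — an input your argument never invokes. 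The paper's proof avoids the trap precisely by splitting the two metrics: the curvature of the smooth metric $\norm{\cdot}_s$ is a smooth form on $\mathbb D\times\pu$ and is therefore $T$-integrable, while only the curvature of the Lipschitz metric $\norm{\cdot}$ is estimated pointwise along the leaves, where the bound by the Poincar\'e area form does hold uniformly in $z\in\pu$ (note $\frac{4(\Im z)^2}{(|z-\Re\tau|^2+(\Im\tau)^2)^2}\le\frac{1}{(\Im\tau)^2}$), after which your final integration over the cusp using fiber mass $1$ applies. To repair your argument, keep your computation but treat the term $\log(1+|z-\tau|^2)$ separately along these lines instead of absorbing it into the claimed $O((\Im\tau)^{-2})$ decay.
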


\begin{proof} 
It suffices to proves the lemma for  $\Psi = \log \frac{\norm{\cdot}}{\norm{\cdot}_s}$. In this case
 the integral $I(T)$ is nothing but the $T$-integral of the differences between the curvature of $\norm{\cdot}$ and that of 
 $\norm{\cdot}_s$. Because $\norm{\cdot}_s$ is smooth and hence $T$-integrable, it is enough
  to prove that the curvature of $\norm{\cdot}$ is $T$-integrable. We claim that the restriction of the curvature of $\norm{\cdot}$ along 
  the leaves is bounded in modulus by the leafwise Poincar\'e metric. 
  This is sufficient for our purposes since the Poincar\'e metric is $T$-integrable 
  (due to the fact that $T$ projects on the integration current on $\mathbb D$) and that the $T$-integral of a $(1,1)$-form depends 
  only on its restriction to $\mathcal F$. 
  
  To prove this claim, we work in the $(\tau,z)$-uniformizing coordinates, 
  and use   formula~\eqref{eq:Lipschitz spherical metric} to get that the curvature of 
  $\norm{\cdot}$ along the leaf $\mathbb H^2 \times z$ is 
\[  \frac{1}{i} \partial \overline{\partial}_\tau \log \big( \frac{\Im \tau }{|z-\Re \tau|^2 + \Im^2 \tau } \big) .\]
%This quantity is well-known to be the opposite of the Poincar\'e volume form if $z$ is real and in general there is an additional term which by a straighforward computation can be seen to be,
Then,  writing $\tau = x+iy$, we compute
\[ \frac{1}{i} \partial \overline{\partial}_\tau \log \big( \frac{ y }{|z- x|^2 + y^2  } \big) = \big( \frac{-1}{y^2} + \frac{2 \Im ^2 z }{\big(  (x- \Re z) ^2 + y^2 + \Im^2 z\big)^2 } \big) dx\wedge dy \]
and the result follows since 
\[ 0\leq \frac{2 \Im ^2 z }{\big(  (x- \Re z) ^2 + y^2 + \Im^2 z\big)^2 } \leq \frac{2}{ y^2}. \] 
\end{proof}

The index is defined so as to have the following formula, which corrects formula \eqref{eq:cohomological formula compact case}. For every puncture $p$ of $X$, we define $I(T,p)$ to be the index of the
canonical foliated harmonic current defined in subsection \ref{ss:harmonic currents} at the puncture $p$. 

\begin{lem}   \label{l:lyapunov exponent 1}
$\displaystyle\chi(\sigma)  = \frac{\pi} {\mathrm{vol}(X)} \left( N_{\overline{\mathcal F}} \cdot T + \sum _p I(T, p) \right)$.
\end{lem}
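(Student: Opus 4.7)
Lemma \ref{l:computation lyapunov exponent} already expresses the Lyapunov exponent as the curvature integral
\[
\chi(\sigma) = \frac{\pi}{\vol(X)}\int_{M_\sigma}\Theta\wedge T,
\]
where $\Theta$ is the curvature form of the Lipschitz family of spherical metrics on $N_{\mathcal F}$. The strategy is to rewrite the right-hand side as a cohomological pairing on the compactification $\overline M_\sigma$ plus local correction terms at the cusps. The singular behaviour of $\Theta$ near each puncture, made explicit by formula~\eqref{eq:singularities}, is exactly what the foliation index $I(T,p)$ is designed to measure.

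\textbf{Construction of a smooth comparison metric.} For each puncture $p$ choose a smooth function $\Psi_p$ on $M_\sigma$, supported in a bundle neighbourhood of $\pi^{-1}(p)$ of the form $D_{r}^{*}\times\pu$, and equal to $\log(\norm{\cdot}/\norm{\cdot}_s)$ in a smaller neighbourhood of $\pi^{-1}(p)$. Here $\norm{\cdot}_s$ is the smooth metric on $N_{\overline{\mathcal F}}$ defined in Step~1 by \eqref{eq:smooth metric}. Extending all the $\Psi_p$'s by zero outside their respective cusp neighbourhoods, define a new hermitian metric on $N_{\mathcal F}$ by
\[
\norm{\cdot}' := \norm{\cdot}\cdot\exp\Bigl(-\sum_{p\in P}\Psi_p\Bigr).
\]
By construction, near each $\pi^{-1}(p)$ we have $\norm{\cdot}' = \norm{\cdot}_s$, so $\norm{\cdot}'$ extends smoothly to a metric on $N_{\overline{\mathcal F}}$, whose curvature form $\Theta'$ is smooth on $\overline M_\sigma$ and represents $c_1(N_{\overline{\mathcal F}})$. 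At the level of curvatures,
\[
\Theta - \Theta' = \frac{1}{i\pi}\partial\overline{\partial}\Bigl(\sum_{p\in P}\Psi_p\Bigr).
\]

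\textbf{Splitting the integral.} Integrating this identity against $T$ and using the disjointness of the supports of the $\Psi_p$'s, we get
\[
\int_{M_\sigma}\Theta\wedge T = \int_{M_\sigma}\Theta'\wedge T + \sum_{p\in P}\int_{M_\sigma}\frac{1}{i\pi}\partial\overline{\partial}\Psi_p\wedge T.
\]
Each summand in the right-hand sum is, by its very definition \eqref{eq:index} and the local model of $\overline{\mathcal F}$ at $p$, equal to the index $I(T,p)$; convergence of each of these integrals is precisely Lemma~\ref{l:convergent}. For the first term, since $\Theta'$ is a smooth closed $(1,1)$-form on $\overline M_\sigma$ and since $\overline T$ puts no mass on the fibres over the punctures (Proposition~\ref{p:extension}), we have
\[
\int_{M_\sigma}\Theta'\wedge T = \int_{\overline M_\sigma}\Theta'\wedge\overline T = N_{\overline{\mathcal F}}\cdot\overline T,
\]
the last equality being the cohomological pairing discussed in \S\ref{ss:cohomological degree} (the Bott--Chern/Dolbeault isomorphism shows that the integral depends only on the class of $\Theta'$, and any smooth representative of $c_1(N_{\overline{\mathcal F}})$ computes the same number). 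Substituting into Lemma~\ref{l:computation lyapunov exponent} yields the desired formula.

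\textbf{Main obstacle.} The only delicate point is a careful verification that no contribution has been lost at the cusps. This amounts to checking three things: that the comparison metric $\norm{\cdot}'$ truly extends smoothly to $N_{\overline{\mathcal F}}$ (granted by Step~1); that the splitting of the singular integral $\int\Theta\wedge T$ is legitimate, i.e.\ that both pieces converge (this is Lemma~\ref{l:convergent}); and that passing from $M_\sigma$ to $\overline M_\sigma$ in the smooth piece introduces no atom (this is Proposition~\ref{p:extension}). With these three facts in hand the proof is essentially bookkeeping.
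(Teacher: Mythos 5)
Your proof is correct and follows essentially the same route as the paper: write the Lipschitz metric on $N_{\mathcal F}$ as a smooth metric on $N_{\overline{\mathcal F}}$ times $e^{\Psi}$ near the cusps, split $\int\Theta\wedge T$ accordingly, identify the smooth part with the cohomological pairing $N_{\overline{\mathcal F}}\cdot\overline T$ and the $\partial\overline\partial\Psi_p$ parts with the indices $I(T,p)$, invoking Lemma \ref{l:convergent} for convergence. The only cosmetic difference is the direction of the comparison (you divide the Lipschitz metric by $e^{\sum_p\Psi_p}$, the paper multiplies the smooth one by $e^\Psi$), which is the same decomposition.
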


\begin{proof}
Let $\Psi$ be the function on $M_\sigma$, defined in a neighborhood of the exceptional fibers, 
as just constructed. Introduce a smooth family of 
metrics  on the fibers of  $\overline M_\sigma$, which coincides with $\norm{\cdot}_s$ near the punctures. Such a family is not Lipschitz, so we multiply it by a function of the form $e^\Psi$, to make it coincide with the local model discussed above, 
 $\norm{\cdot} = \norm{\cdot}_s  \cdot e^\Psi$. Then we infer that  
\[  \int \Theta \wedge T = \int \Theta_{\norm{\cdot}_s} \wedge T + \int \frac{1}{i\pi} \partial \overline{\partial} \Psi \wedge T= N_{\overline{\mathcal F}} \cdot T + \sum _p I(T, p) \]
and result follows from  Lemma \ref{l:computation lyapunov exponent}.
\end{proof}

\medskip

\noindent{\em Step 5. An invariance property for the index}

\begin{prop} \label{p:invariance}
The index $I(T)$ takes the same value on all foliated  harmonic currents $T$ on $\mathcal F_m$ that give mass $1$ 
to the  fibers $\set{u}\times \mathbb P^1$. 
\end{prop}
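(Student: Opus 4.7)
The plan is to reduce, by linearity of $I$ and its independence on the choice of $\Psi$, to showing that $I(S) = 0$ for the difference $S = T_1 - T_2$ of two such currents, which is a signed harmonic current directed by $\mathcal F_m$ whose restriction to each vertical slice has total mass zero. Lifting to $\hh \times \pu$ via the covering $(\tau,z) \mapsto (e^{2i\pi\tau}, z-\tau)$, the lifted current $\widetilde S$ is encoded by a $\Gamma$-equivariant family of signed measures $\{\bar\nu_\tau\}_{\tau\in\hh}$ on $\pu$, harmonic in $\tau$, satisfying $(z+1)_*\bar\nu_\tau = \bar\nu_{\tau+1}$ and $\bar\nu_\tau(\pu) = 0$ for every $\tau$.

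The first concrete step is an explicit computation. Combining the formulas for $\|\cdot\|$ and $\|\cdot\|_s$ from Steps 1--2, one obtains in $(\tau, z)$-coordinates
\[ \log\frac{\|\cdot\|}{\|\cdot\|_s} \;=\; \log y + 2\pi y + \log\bigl(1 + |z-\tau|^2\bigr) - \log g, \]
with $y = \Im\tau$ and $g := (\Re\tau - \Re z)^2 + (\Im\tau)^2 + (\Im z)^2$. A direct calculation of $\partial_\tau\partial_{\bar\tau}$ (with $z$ held fixed) yields
\[ \Psi_{\tau\bar\tau}(\tau,z) \;=\; -\frac{1}{4y^2} + \frac{1}{(1+|z-\tau|^2)^2} - \frac{(\Im z)^2}{g^2}. \]
Since only the $d\tau \wedge d\bar\tau$ component of $dd^c\Psi$ pairs non-trivially with the foliated current $\widetilde S$, the invariance claim reduces to showing that this leafwise Laplacian, integrated against $d\bar\nu_\tau\, dx\, dy$ over a fundamental domain $D\subset \hh\times\pu$ for $\Gamma$, vanishes. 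The $z$-independent term $-1/(4y^2)$ drops out because $\bar\nu_\tau(\pu) = 0$.

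For the remaining two summands, which together form the leafwise Laplacian of $\Psi_{\mathrm{fib}} := \log(1 + |z-\tau|^2) - \log g$, I would apply Green's second identity in the $\tau$-variable on the fundamental strip $[0,1]\times[y_0,\infty)$. Writing $d\bar\nu_\tau = \bar h(\tau, z)\, d\nu(z)$ relative to a $\Gamma$-invariant reference measure on $\pu$, the density $\bar h(\cdot,z)$ is harmonic on $\hh$, so the bulk Green integral vanishes and only boundary contributions survive. The two vertical sides $\Re\tau = 0, 1$ cancel by the $\Gamma$-equivariance of $\bar h$ and $\Psi_{\mathrm{fib}}$; the lower horizontal boundary at $\Im\tau = y_0$ can be arranged to contribute trivially by exploiting the $\Psi$-independence of $I$ (i.e., comparing with an alternative cutoff whose difference with $\Psi$ is compactly supported away from the cusp and thus pairs to zero against the closed current $S$).

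The hard part — and the main obstacle of the argument — is showing that the boundary term at $\Im\tau \to \infty$ vanishes. The key estimates are $\Psi_{\mathrm{fib}}(\tau, z) = O(1/y)$ and $\partial_y\Psi_{\mathrm{fib}} = O(1/y^2)$, uniform for $z$ in compacta of $\cc$, together with uniform bounds on $\bar h$ inherited from the Poisson representation of harmonic densities. The delicate point is handling the monodromy fixed point $z = \infty$, where the asymptotics deteriorate; the cleanest way to close the argument is to establish the weak convergence $\int_0^1 \bar\nu_{x+iy}\, dx \rightharpoonup 0$ as $y \to \infty$, which follows from the spreading of the Poisson kernel over periods combined with the zero-mass condition and the equivariance $(z+1)_*\bar\nu_c = \bar\nu_{c+1}$ of the boundary data. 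This yields $I(S) = 0$, completing the proof.
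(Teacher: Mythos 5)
Your reduction to $I(S)=0$ for the signed current $S=T_1-T_2$ is legitimate ($I$ is linear and independent of the cutoff $\Psi$), and your explicit formula is correct: in the $(\tau,z)$-coordinates one indeed has $\log\frac{\norm{\cdot}}{\norm{\cdot}_s}=\log y+2\pi y+\log(1+|z-\tau|^2)-\log g$ with $g=(\Re\tau-\Re z)^2+y^2+(\Im z)^2$, the leafwise Laplacian is as you computed, only the $d\tau\wedge d\bar\tau$ component pairs with a directed current, and the $z$-independent term $-\frac{1}{4y^2}$ does drop out by the zero fiber-mass condition (with Fubini justified since $\int y^{-2}\,dx\,dy<\infty$ on the cusp). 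The gap is exactly where you flag it: the boundary term as $\Im\tau\to\infty$ is not proved to vanish, and the tools you invoke do not suffice. Your estimates $\Psi_{\mathrm{fib}}=O(1/y)$, $\partial_y\Psi_{\mathrm{fib}}=O(1/y^2)$ hold only for $z$ in compacta of $\cc$ (more generally when $\Im z$ stays bounded), whereas for \emph{every} normalized equivariant family the fiber measures push all their mass out of such regions: $\nu_\tau(K)\le C_K/y$ for $K\Subset\cc$. The mass lives precisely where your estimates degenerate, and there $\Psi_{\mathrm{fib}}$ is not small: writing $v=z-\tau$, one has $\Psi_{\mathrm{fib}}=\log\frac{1+|v|^2}{|v|^2+2y\Im v+2y^2}$, which is of order $1$ when $\Im z\sim y$ and reaches $-2\log y$ at $z=\tau$. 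Weak-$*$ convergence $\int_0^1\overline{\nu}_{x+iy}\,dx\rightharpoonup 0$ (which is true) only controls pairings against functions that are uniformly bounded and continuous on $\pu$; here the test functions $\Psi_{\mathrm{fib}}(\tau,\cdot)$ and $y\,\partial_y\Psi_{\mathrm{fib}}(\tau,\cdot)$ have sup norms growing like $\log y$ and depend on $\tau$, so the boundary integral is not $o(1)$ by soft arguments. Admissible families such as $\nu_b=\sum_{j\ge 1}2^{-j}\delta_{b+i4^j}$ show that the mass placed at height $\Im z\sim y$ need not decay fast enough to beat a $\log y$-sized integrand without a genuinely new estimate (e.g.\ quantitative decay of $|\overline{\nu}_\tau|$ on $\set{\Im z\ge \e y}$, or an averaging in $y$ exploiting the absolute convergence of the index integral to pick good radii). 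As written, the ``hard part'' is restated rather than resolved.

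For comparison, the paper sidesteps the boundary analysis of the full singular $\Psi$ altogether. It first proves that $I$ is invariant under the two symmetry families $H_x$ and $V_c$ of $\mathcal F_m$ (induced by $(\tau,z)\mapsto(\tau+x,z)$ and $(\tau,z)\mapsto(\tau,z+c)$): in the difference $\Psi\circ H_x-\Psi$ the divergent cusp terms ($2\pi y+\log y$ in your decomposition) cancel, leaving a function that tends to $0$ uniformly as $u\to 0$ and whose leafwise differential is dominated by the Poincar\'e metric $\frac{|du|}{|u|\log|u|}$; a cutoff $\theta_r$ integration by parts then closes using only that $T$ projects to the integration current on $\dd$. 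It then decomposes an arbitrary normalized harmonic current as a convex combination of the extremal currents $T_{(a_0,z_0)}$, which form a single orbit under $H_x,V_c$, and concludes by affinity of $I$. If you want to keep your direct Green's-identity route, the lesson from the paper's argument is that you should arrange a cancellation of the cusp-singular part \emph{before} estimating boundary terms, or replace weak convergence by a quantitative bound on the mass of $\overline{\nu}_\tau$ in the region $\Im z\gtrsim \e y$ valid for all equivariant harmonic families.
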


\begin{proof} 
Let us introduce two families of symmetries for the foliation $\mathcal F_m$. They are 
 induced by the translations $(\tau,z) \mapsto (\tau +x, z)$ and $(\tau,z) \mapsto (\tau, z+c)$ for $x\in \mathbb R$ and $c\in \mathbb C$ at the level of the universal cover~:
\begin{equation} \label{eq:symmetries} H_x(u,V) = (e^{2i\pi x} u, \frac{V}{1-xV})\ \ \ \text{and}\ \ \ V_c (u, V) = (u, \frac{V}{1+cV}). \end{equation}
The following result  is the key of the argument:

\begin{lem}\label{l:invariance} If $T$ is as in Proposition \ref{p:invariance}, then for all $x\in \mathbb R$ and $c\in \mathbb C$,  
$I(( H_x)_*T)=I((V_c)_* T)= I(T)$.
\end{lem}

\begin{proof}
We treat the case of $H_x$, the proof being similar (and in fact easier) for $V_c$. We have that
\[ I(( H_x)_*T)- I(T)  = \int_{\mathbb D^* \times \mathbb P^1} \frac{1}{2i\pi} 
\partial \overline{\partial}  (\Psi \circ H_x - \Psi) \wedge T .\]
Let us split this function as a sum  
\begin{equation}\label{eq:splitting} \Psi \circ H_x - \Psi = \Gamma + \Gamma _s, \end{equation}
where $\Gamma$ and $\Gamma_s$ are smooth functions on $\mathbb D^* \times \mathbb P^1$   supported in 
 $\mathbb D^* _r\times \mathbb P^1$ for some $0<r<1$ and such that 
 in a neighborhood of the divisor $\set{u=0}$,   $\Gamma = \log \frac{(H_x)_*\norm{\cdot}}{\norm{\cdot}}$ and $\Gamma_s =\log \frac{(H_s)_*\norm{\cdot}_x}{\norm{\cdot}_s}$. 
Observe that 
 \begin{equation}\label{eq:gamma}
 \int  \frac{1}{2i\pi} \partial \overline{\partial}  \Gamma_s \wedge T =0
 \end{equation} since $\norm{\cdot}_s$ is a smooth metric,
  thus $\Gamma_s$ is a smooth function. 
Now   $\Gamma$ is smooth if $u\neq0$, tends to $0$ uniformly when $u$ tends to $0$, and
    the derivative of $\Gamma$ along the leaves is bounded by the Poincar\'e metric (since $\norm{\cdot}$ is a Lipschitz metric). Since the leafwise Poincar\'e metric is given in $u$-coordinates by $\frac{|du|}{|u|\log |u|}$, we get that 
\begin{equation} \label{eq:bound}  | d_{\mathcal F} \Gamma |\leq \frac{|du|}{|u|\log |u|} .\end{equation} 
From  \eqref{eq:splitting} and \eqref{eq:gamma} we are left to prove that 
\begin{equation} \label{eq:computation} \int \frac{1}{2i\pi}  \partial \overline{\partial}  \Gamma \wedge T =0 \end{equation}
(the fact that this integral makes sense follows from Lemma \ref{l:convergent}).
To do this, we introduce a family of smooth functions $\theta_r: \mathbb D^* \rightarrow [0,1]$ such that $\theta_r(u)=1$ if $|u|\geq r$, $\theta_r (u) = 0$ if $|u|\leq r/2$, $\norm{d\theta_r }_{\infty} = O (\frac{1}{r})$, and $\norm{\partial \overline {\partial} \theta_r}_{\infty} = O(\frac{1}{r^2})$. Since $\fr\overline \fr \Gamma\wedge T$ is of order 0, 
 to get~\eqref{eq:computation}, it is enough to prove that 
\[  \lim_{r\rightarrow 0} \int \theta_r  \partial \overline{\partial}  \Gamma \wedge T = 0 . \]
To compute this integral, we observe that since $T$ is harmonic
$\int \partial \overline{\partial} (\theta_r \Gamma) \wedge T = 0$ , hence we get  that 
$$ \int \theta_r \partial \overline{\partial}  \Gamma \wedge T = - \int \Gamma \partial \overline{\partial}  \theta_r \wedge T  - 
2\Re \int \partial \theta_r \wedge \overline{\partial} \Gamma\wedge T  =: - A_r  - B_r.
$$
% 
%where 
%\[ I= \int \Gamma \partial \overline{\partial}  \theta_r \wedge T\ \  \text{    and    } \ \ II = 2\Re \int \partial \theta_r \wedge \overline{\partial} \Gamma\wedge T . \]
To conclude the proof, we will show   that both integrals $A_r$ and $B_r$ tend to $0$ with $r$. 
 To estimate the former, we write  
\[  \abs{A_r}  \leq \delta(\Gamma,r) O (\frac{1}{r^2}) \int _{ \frac{r}{2} \leq |u|\leq r} idu\wedge d\overline{u} \wedge T \leq O (\delta(\Gamma, r) )\]
where $\delta(\Gamma,r) = \sup _{\frac{r}{2}\leq |u| \leq r, v\in \mathbb P^1} \Gamma(u,v) $, and the last inequality holds because $T$
 projects on the current of integration on $\mathbb D$. As observed above,  $\delta(\Gamma,r)= o(1)$ whence 
$ \lim_{r\rightarrow 0} A_r = 0 $. The same argument works for the second integral: indeed by using~\eqref{eq:bound} and the bound on 
$\norm{d\theta_r }_{\infty}$, 
we get that 
\[  \abs{B_r} \leq O(\frac{1}{r^2 \log (\frac{1}{r})})  \int _{ \frac{r}{2} \leq |u|\leq r} idu\wedge d\overline{u} \wedge T \leq O(\frac{1}{\log (\frac{1}{r})}) ,\]
which completes the proof.
\end{proof}

Let us resume the proof of Proposition \ref{p:invariance}. 
Recall from \S \ref{ss:harmonic currents} that a foliated harmonic current $T$ for $\mathcal{F}_m$ lifts as a harmonic current 
$\widetilde{T}$ on $\hh\times \pu$, which  by the Poisson formula is 
induced by a family of probability measures 
$\set{\nu_a}_{a\in \rr}$ on $\pu$, depending measurably  on $a$, and 
satisfying the relation $\nu_{a+1} = (z+1)_*\nu_a$.  From this equivariance, the data of such a family of measures is in turn equivalent 
to that of a probability measure on $[0,1)\times \pu$. Such a measure is a convex combination of Dirac masses on $[0,1)\times \pu$. 
This shows that any family $\set{\nu_a}$ as above is a convex combination of families of the form $\nu_{{(a_0, z_0)}}$, $a\in [0,1)$, $z_0\in \pu$, 
where $(\nu_{(a_0, z_0)})_a = 0$ if $a\neq a_0$ mod. $\zz$ and $(\nu_{(a_0, z_0)})_{a_0+k} = \delta_{z_0+k}$. 
(Notice that the point $\infty\in \pu$, corresponding to the separatrix of the singularity of $\mathcal{F}_m$,
 plays a special role here. Nevertheless we do not need to take it in to account since our measures and currents are diffuse.)
Going back to currents, 
 $\nu_{(a_0, z_0)}$ corresponds 
to a certain harmonic current $T_{(a_0, z_0)}$ and all foliated harmonic currents for $\mathcal{F}_m$ are obtained from these by 
taking convex combinations. 

Now it is clear that $(H_x)_*(T_{(a_0, z_0)}) = T_{({a_0+x, z_0})}$ and 
$(V_c)_*(T_{(a_0, z_0)}) = T_{({a_0, z_0+c})}$, hence we infer from  Lemma \ref{l:invariance} that  the index $I$ 
takes the same value on all the extremal points $T_{(a_0, z_0)}$, and we are done. 
\end{proof}

\section{Proof of Theorem \ref{theo:formula} (and of Proposition \ref{p:computation of Lyapunov exponent})}\label{sec:proof}

The proof is based on some basic cohomological computations in $H^2(\overline{M }_\rho, \cc)$.
Recall  from \S\ref{ss:generalities} that $H^2(\overline{M }_\rho, \cc) = \cc\overline [\overline{s}] \oplus \cc   [f]$. 
We will need the following fact: if $\mathcal G$ is a singular holomorphic foliation on a complex surface, and $C $ is a non singular compact holomorphic curve 
%not everywhere tangent to $\mathcal G$, and not intersecting the singular set of $\mathcal G$, we have 
%\[ N_{\mathcal G} \cdot C = \chi (C)  + | tang (\mathcal G, C) |, \]
%where the tangency points are counted with multiplicities. See \cite{brunella} for details. 
that is everywhere transverse to $\mathcal{G}$, then 
$$ N_{\mathcal G} \cdot C = \mathrm{eu} (C).$$
Indeed, under these assumptions, $N_\mathcal{G}\rest C \simeq T_C = -K_C$, 
and by the genus formula, $K_C\cdot C = -\mathrm{Eu(C)}$. 
%We then deduce the formulas 
%\[ N_{\overline{\mathcal F}} \cdot \overline{s} = \chi ( \overline{s} ) + |tang (\overline{\mathcal F}, \overline{s} ) | \text{ and } N_{\overline{\mathcal F}} \cdot \varphi = 2 .\]
%In our particular situation, we also have 
%\[  \overline{s}^2 = \chi (\overline{s}) + |tang (\overline{\mathcal F}, \overline{s} ) |. \]
%See \cite{cdfg}. We deduce 
%\begin{equation} \label{eq:normal bundle} N_{\overline{\mathcal F}} = 2 \overline{s} - (\chi(\overline{s} ) + k) \varphi .\end{equation}
%From Lemma \ref{l:lyapunov exponent 1}, we have (denoting by $P$ the set of punctures) 
%\[  \chi = \frac{1}{4\chi(X) } \big( N_{\overline{\mathcal F}} \cdot \overline{T} + |P| \cdot I  \big) \]
%and using the fact that $\chi (\overline{s} ) = \chi (X) + |P|$ and equation \eqref{eq:normal bundle}, we infer 
%\[ \chi = \frac{1}{4 |\chi(X)|} \big(  2 \text{deg} - \chi (X) - k + (I-1) |P| \big)  \]
%and 
%\[ \chi = \frac{1}{4} + \frac{\delta} {2} - \frac{k}{|\chi (X)|} + (I-1) \frac{|P|}{4|\chi(X)|}. \]
%To conclude the proof, it suffices to prove that $I= 1$. But this is done by considering the particular case of the projective structure 
%coming from uniformization, since for this later we have $\text{deg}= 0$, $k= 0$ and $\chi = 1/4$. Theorem A together with Proposition 
%\ref{p:computation of Lyapunov exponent} follow.
Hence in our situation, working in $\overline M_\sigma$ we get that 
\begin{equation}\label{eq:npoint}
N_{\overline{\mathcal{F} } }\cdot \overline s = \mathrm{eu} (\overline s) = \mathrm{eu} (\overline X) \text{ and } 
N_{\overline{\mathcal{F}}}\cdot f = \mathrm{eu}(\pu) = 2.
\end{equation}
The intersection form in $H^2(\overline{M }_\rho, \cc)$ is characterized by the identities 
$$\overline s \cdot f =1, \ f^2 = 0, \text{ and } \overline s^2 =  \mathrm{eu} (\overline X). $$
The first two equalities are obvious, and the justification of the third one is as follows: since the section $\overline s$ is everywhere tangent to $\mathcal{F}$ and to the fibers, we get an isomorphism between the tangent bundle and normal bundle to $\overline s$. 
Therefore  the adjunction formula yields
$$\overline s^2  = \mathrm{deg}(N_{\overline s}\rest{\overline{s}})  = - \mathrm{deg}(K_{\overline s}\rest{\overline{s}}) =
 \mathrm{eu} (\overline s) =  \mathrm{eu} (\overline X). $$
From this and \eqref{eq:npoint} we easily deduce that  
\begin{equation} \label{eq:normal bundle} [N_{\overline{\mathcal F}} ] = 2 [\overline{s}] -  \mathrm{eu}(\overline{X} ) [f] .\end{equation}
Now Proposition \ref{p:weaker} asserts that 
$$\chi (\sigma)  = \displaystyle \frac{1}{2\abs{\mathrm{eu}(X)} } ( N_{\overline{\mathcal F}} \cdot \overline{T} + I \cdot \#P). $$
Also, from Proposition \ref{p:cohomological expression of the degree} we have that 
$\delta =\unsur{\vol(X)}  \overline{T} \cdot \overline{s}$, and it is obvious that $\overline{T} \cdot f = 1$. Using the fact that $\mathrm{eu}(\overline X)
=\mathrm{eu}(X)  + \#P$, altogether this yields 
$$\chi (\sigma)   = \frac12  + 2\pi \delta+ (I-1) \frac{\#P}{2\abs{\mathrm{eu}(X)}}.$$ Therefore,
 to finish the proof 
it is enough to show that $I=1$. This is done by considering the particular case of the canonical projective structure 
  induced by the 
uniformization of $X$, since in this case we have that  $\delta = 0$ and $\chi = \frac12$ (see the 
remarks following \cite[Def. 2.1]{Bers1}). The proof is complete. \qed

\section{The   dimension of   harmonic measure: proof of Theorem \ref{theo:dimension}} \label{s:dimension}

In this part we provide the proof of Theorem \ref{theo:dimension}. We start with a result originating 
 in the work of S. Frankel   (see   \cite{frankel}).

\begin{prop} \label{p:frankel}
Let $\varphi$ be the  density of the desintegration of $T$ (resp. $\mu$)
 along the leaves of $\mathcal F $ (see (\ref{eq:foliated})). Then the integral 
 %\note{pourrait-on changer de notation $\alpha = A$. En fait avec nos normalisations, on a $\alpha =A/2$. B. J'ai fait le changement. R}
\begin{equation} \label{eq:alpha}   A = -  \int _{X_\rho} \Delta_{\mathcal F} \log \varphi \  d\mu \end{equation}
is convergent, and moreover 
\begin{equation} \label{eq:bound on alpha}   0\leq A \leq 1.  \end{equation}
\end{prop}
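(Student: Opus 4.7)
The plan is to prove the much stronger pointwise statement $0 \le -\Delta_{\mathcal F}\log\varphi \le 1$ on $M_\sigma$, whence both the convergence of $A$ and the asserted bounds will follow by integrating against the probability measure $\mu$. The lower bound is immediate: since $\varphi$ is leafwise positive harmonic, the standard identity
\[ \Delta_{\mathcal F}\log h = -|\nabla_{\mathcal F}\log h|_{\mathrm{hyp}}^2 \]
for any positive harmonic $h$ gives $-\Delta_{\mathcal F}\log\varphi = |\nabla_{\mathcal F}\log\varphi|_{\mathrm{hyp}}^2 \ge 0$, which also settles the sign of the integrand once and for all.

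For the upper bound I would lift the picture to the universal cover $\hh\times\pu$ and use the Poisson-kernel representation of the fiberwise harmonic measures from Proposition \ref{p:harmonic measures}: one has $\nu_\tau = \Phi_*\bigl(P(\tau,\cdot)\,d\lambda/\pi\bigr)$ with $P(\tau,a) = \frac{1}{\pi}\frac{\Im\tau}{|\tau-a|^2}$ and $\Phi\colon\partial\hh\to\pu$ the Furstenberg boundary map. Disintegrating the Lebesgue measure $\lambda$ along the fibers of $\Phi$ produces a measurable family of measures $\lambda_z$ on $\Phi^{-1}(z)$, and the lifted leafwise density reads $\widetilde\varphi(\tau,z) = Q(\tau,z)/Q(\tau_0,z)$, where
\[ Q(\tau,z) := \int_{\Phi^{-1}(z)} P(\tau,a)\,d\lambda_z(a) \]
is a positive harmonic function of $\tau$ for $\nu_{\tau_0}$-a.e.\ $z$. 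Since the reference factor $Q(\tau_0,z)$ depends only on the transverse coordinate, the quantity $-\Delta_{\mathcal F}\log\varphi = |\nabla_{\mathcal F}\log Q|_{\mathrm{hyp}}^2$ descends unambiguously to $M_\sigma$.

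The pointwise estimate now follows from Cauchy--Schwarz. A direct computation in coordinates $\tau=x+iy$ gives $|\nabla_{\mathrm{hyp}}\log P(\tau,a)|^2 \equiv 1$ for every $a\in\partial\hh$ (geometrically, $\log P(\tau,a)$ is, up to a $\tau$-independent constant, the Busemann function based at $a$, whose hyperbolic gradient has unit norm). Writing $\nabla_\tau Q = \int P\,\nabla_\tau\log P\,d\lambda_z$ and applying Cauchy--Schwarz in the splitting $\sqrt P\cdot\sqrt P\,\nabla\log P$ yields
\[ |\nabla_\tau Q|_{\mathrm{hyp}}^2 \leq \Bigl(\int P\,d\lambda_z\Bigr)\Bigl(\int P\,|\nabla_\tau \log P|_{\mathrm{hyp}}^2\,d\lambda_z\Bigr) = Q^2, \]
so that $-\Delta_{\mathcal F}\log\varphi = |\nabla_{\mathcal F}\log Q|_{\mathrm{hyp}}^2 \le 1$ pointwise, and hence $A \le 1$ after integration against $\mu$.

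The main technical obstacle is to justify the measurable disintegration $z\mapsto\lambda_z$ when $\Phi$ is not essentially injective; this is a standard application of the measurable disintegration theorem to the measurable map $\Phi$ provided by Proposition \ref{p:harmonic measures}. As a bonus, this approach identifies the equality case: $-\Delta_{\mathcal F}\log\varphi = 1$ forces $a\mapsto\nabla_\tau\log P(\tau,a)$ to be $\lambda_z$-a.e.\ constant, which happens only when $\lambda_z$ is concentrated at a single point, i.e.\ when $\Phi$ is essentially injective near $z$. Consequently $A<1$ whenever $\ker(\hol)$ is nontrivial, in accord with the strict inequality announced in the introduction.
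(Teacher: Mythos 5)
Your argument is correct, and its skeleton is the same as the paper's: both proofs reduce the proposition to the pointwise estimate $0\le -\Delta_{\mathcal F}\log\varphi=\norm{\nabla_{\mathcal F}\log\varphi}^2\le 1$ for the leafwise positive harmonic density, and then integrate against the probability measure $\mu$ (which also gives convergence of the integral). The difference is in how the upper bound is obtained. The paper simply invokes the sharp half-plane Harnack inequality for positive harmonic functions on the hyperbolic plane, proved by passing to a conjugate harmonic function and applying the Schwarz--Pick lemma: any positive harmonic $h$ on $\hh$ satisfies $\norm{\nabla_{\rm hyp}\log h}\le 1$, and $\varphi$ lifts to such a function on the universal cover of $\mu$-a.e.\ leaf. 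You instead re-derive this inequality for the specific function at hand, via the Poisson representation coming from the Furstenberg map $\Phi$, the disintegration $\lambda_z$ of Lebesgue measure over $\Phi$, the Busemann-type identity $\norm{\nabla_{\rm hyp}\log P(\cdot,a)}\equiv 1$, and Cauchy--Schwarz. This is fine, but the detour through $\Phi$ and $\lambda_z$ (including the measurable disintegration you flag as the ``main technical obstacle'') is not needed: your Cauchy--Schwarz computation applied to the Herglotz representation of an arbitrary positive harmonic function on $\hh$ proves the same inequality, so the only property of $\varphi$ that matters is leafwise positivity and harmonicity, exactly as in the paper. What your route buys is an explicit handle on the equality case ($\lambda_z$ a Dirac mass, i.e.\ extremality of the leafwise density), which the paper develops separately in Lemma \ref{lem:zero degree} when characterizing $A=1$; what it costs is the extra disintegration machinery, which is superfluous for the statement being proved.
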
 

Notice that $\varphi$ is defined only up to a multiplicative factor which is constant along the leaves, which shows that $\Delta_{\mathcal F} \log \varphi$ is well defined. The quantity $A$ is called the {\em action} of $T$.

\begin{proof} The positivity and harmonicity of the density $\varphi$ implies  that 
$\Delta_{\mathcal F} \log \varphi = - \norm {\nabla \log \varphi }^2$.  Thus 
by the Harnack inequality for positive harmonic functions we infer that $ \norm {\nabla \log \varphi } $ is uniformly bounded, whence the convergence of the integral in  \eqref{eq:alpha}. To get the bound \eqref{eq:bound on alpha}, we observe that  $\varphi$ %extends analytically along
lifts to the universal cover of $\mu$-a.e. leaf as a positive harmonic function, 
hence the half-plane version of the  Harnack inequality (obtained by taking conjugate harmonic functions and applying the Schwarz-Pick lemma) yields
  $ \norm {\nabla \log \varphi } \leq 1 $. This proves \eqref{eq:bound on alpha}
(see \cite{deroin levi plate} for more details). \end{proof}

The main step of the proof is the following probabilistic  estimate of the measure of a ball inside a fiber. 
For every $ {x} \in M_\sigma$, and $\rho >0$, we denote by   $B_\pu(\tilde{x},\rho)$   
the ball of radius $\rho$ centered at $\tilde{x}$ inside the fiber 
 $\mathbb P^1_{\pi(x)}$. To ease notation, from now on if $x\in M_\sigma$ we denote its  fiber  by $\mathbb{P}^1_x$
  (resp. the corresponding  harmonic measure 
by  $\nu_x$). 

\begin{prop} \label{p:sup dimension} Let $A$ be as in Proposition \ref{p:frankel}.
 For every $\varepsilon >0$, there exists $r_\e >0$ such that for every $0< r < r_\e$,  
\[  \mu \left( x\in M_\sigma ,\   \ \nu_x (B_\pu({x} , r)) \geq r ^{\frac{A}{2\chi}+\varepsilon}) \right) \geq 1 -\varepsilon. \]
\end{prop}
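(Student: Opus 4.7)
The proof adapts Ledrappier's method \cite{ledrappier} via a tube estimate for leafwise Brownian motion. By Definition-Proposition \ref{defprop:harmonic measure}, $\nu_x$ is the distribution of the asymptotic endpoint $\mathrm e(\omega) \in \pu$ of a leafwise Brownian path $\omega$ starting at $x$, so
\[ \nu_x(B({x},r)) = W_x^{\mathcal F}\big\{\omega : \mathrm e(\omega) \in B({x},r) \big\}. \]
It thus suffices to exhibit, for $\mu$-typical $x$ and small $r$, a set of Brownian paths landing in $B({x},r)$ whose Wiener measure is at least $r^{A/(2\chi)+\varepsilon}$. Since the fiberwise conditionals of $\mu$ are the measures $\nu_y$, for $\mu$-a.e.\ $x$ the point $x$ lies in the support of $\nu_{\pi(x)}$, so the event $\{\mathrm e(\omega) \in B(x,r)\}$ has positive probability; pick a reference path $\omega_0$ with $\mathrm e(\omega_0)\in B(x,r)$.

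The set will be a \emph{tube} of paths that remain close to $\omega_0$ for a long time, whose analysis combines two exponential estimates. The Lyapunov estimate (Lemma \ref{l:technical step} and Proposition \ref{p:comparison exponents}) gives $\|D_x h_t\|_s \sim e^{-2\chi t}$ for $W^{\mathcal F}_x$-a.e.\ $\omega$; fixing $\varepsilon_1 > 0$ and applying Egorov, there is an event $E_1$ of $\mu \otimes W^{\mathcal F}$-measure $\geq 1 - \varepsilon/2$ on which this estimate is uniform for $t \in [T_0, T_1]$. A standard comparison for M\"obius cocycles then shows that two paths $\omega, \omega_0$ remaining $\delta$-close in the leafwise Poincar\'e metric on $[0,t]$ have asymptotic endpoints satisfying $|\mathrm e(\omega) - \mathrm e(\omega_0)| \leq C \, e^{-(2\chi - \varepsilon_1) t}$.

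The entropy estimate bounds the tube probability. Applying It\^o's formula to $\log \varphi \circ \omega$ and using the harmonicity $\Delta_{\mathcal F}\log\varphi = -|\nabla_{\mathcal F} \log \varphi|^2$ together with the Harnack bound $|\nabla_{\mathcal F} \log \varphi| \leq 1$ from Proposition \ref{p:frankel}, the ergodic theorem yields $\frac{1}{t} \log \varphi(\omega(t)) \to -A$ almost surely. Translated via the heat-kernel interpretation, this implies that the probability that Brownian motion from $x$ stays in the $\delta$-tube around $\omega_0|_{[0,t]}$ is bounded below by $e^{-(A + \varepsilon_2) t}$ on an event $E_2$ of measure $\geq 1 - \varepsilon/2$.

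Calibrating $t = t(r) = \log(C/r) / (2\chi - \varepsilon_1)$ so that $C e^{-(2\chi - \varepsilon_1)t} = r$, the endpoints of all tube paths lie within $r$ of $\mathrm e(\omega_0)$, hence within $2r$ of $x$; the tube's Wiener measure is at least $e^{-(A+\varepsilon_2) t} = r^{(A+\varepsilon_2)/(2\chi - \varepsilon_1)}$, which exceeds $r^{A/(2\chi) + \varepsilon}$ once $\varepsilon_1, \varepsilon_2$ are small enough. Working on $E_1 \cap E_2$ then yields the desired bound on a set of $\mu$-measure $\geq 1 - \varepsilon$ (the harmless factor of $2$ on the radius is absorbed by taking $r_\varepsilon$ smaller). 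The main obstacle is the entropy estimate: extracting a clean lower bound $e^{-(A+\varepsilon_2)t}$ for the tube probability from the ergodic control of $\log \varphi(\omega(t))$ requires a Shannon--McMillan--Breiman type refinement of Proposition \ref{p:frankel} tailored to continuous-time foliated Brownian motion, which is the most delicate point of the argument.
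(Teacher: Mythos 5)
There is a genuine gap, and it sits exactly where you flag it: the ``entropy estimate'' for the tube. The action $A$ is the almost sure exponential rate of the Radon--Nikodym cocycle of the harmonic measures under holonomy, namely of $L_t(\omega)=-\log\frac{(h_t)^{-1}_*\nu_{\omega(t)}}{\nu_{\omega(0)}}(\omega(0))=-\log\frac{\varphi(\omega(t))}{\varphi(\omega(0))}$; it is \emph{not} an entropy of the path process, and it does not control the Wiener measure of a $\delta$-tube. The probability that leafwise Brownian motion stays $\delta$-close to a fixed reference path up to time $t$ decays like $e^{-c(\delta)t}$, where $c(\delta)$ is of the order of the bottom of the Dirichlet spectrum of the tube cross-section, so $c(\delta)\to\infty$ as $\delta\to0$ and bears no relation to $A\le 1$. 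Your two requirements then collide quantitatively: to force all tube endpoints within $r\approx e^{-(2\chi-\e_1)t}$ of $\mathrm e(\omega_0)$ you must keep $\delta$ small (or at least bounded), and then the tube probability is far smaller than $e^{-(A+\e_2)t}$, so the exponent $\frac{A}{2\chi}+\e$ cannot be reached by counting paths. Moreover the deterministic claim that $\delta$-closeness of $\omega$ and $\omega_0$ on $[0,t]$ forces $|\mathrm e(\omega)-\mathrm e(\omega_0)|\le Ce^{-(2\chi-\e_1)t}$ is not correct as stated: $\mathrm e(\omega)$ depends on the whole path, and conditionally on $\omega\rest{[0,t]}$ its law is $(h_t)^{-1}_*\nu_{\omega(t)}$, where $h_t^{-1}$ expands by $e^{2\chi t}$ near the attracting point of $h_t$; the tail can only be controlled in probability, which forces the measure-theoretic argument anyway.

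The mechanism that actually works (and is the one in the paper) avoids tubes altogether. Take a ball of \emph{fixed} radius $R=\frac12 d_\pu(\omega(0),r(\omega))$ around the starting point in its fiber; Oseledets contraction (Lemma \ref{l:technical step}) gives $h_t\lrpar{B_\pu(\omega(0),R)}\subset B_\pu\lrpar{\omega(t),e^{(\lambda+\eta)t}}$ for $t$ large, and the $\nu_{\omega(t)}$-measure of this image is bounded below by $e^{-(A+\eta)t}$: this is Lemma \ref{l:measures of images}, which converts the a.s.\ rate $L_t/t\to A$ into a bound for \emph{sets} by averaging the Radon--Nikodym derivative over the set and applying Jensen's inequality. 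The estimate is then read at the point $x=\omega(t)$, whose law under $W^{\mathcal F}_\mu$ is $\mu$, which yields precisely the $\mu$-statement of the proposition. If you prefer to keep your endpoint-distribution formulation at the starting point, the correct substitute for the tube bound is the martingale identity $\nu_x(B)=\ee_x\big[\nu_{\omega(t)}(h_t(B))\big]$ combined with the cocycle $L_t$ --- i.e.\ essentially Lemma \ref{l:measures of images} again --- not a Shannon--McMillan--Breiman type count of Brownian paths.
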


\begin{proof} 
The proof follows an argument of Ledrappier's   \cite[Thm 4.1, p. 372]{ledrappier}, with the difference that the discrete random walk is replaced   by a cocycle   over the ergodic system $(\Omega^{\mathcal{F}} , \sigma , W^{\mathcal{F}}_\mu)$.
 
In view of the next lemma it is useful to recall that the data of a foliated Brownian path is equivalent to that of a Brownian path in $X$
 together with its starting point in the fiber. 

\begin{lem} \label{l:measures of images}
Let $x\in X$ and $C_x$ be a measurable subset of $\mathbb P^1 _x$ such that $\nu_x (C_x) >0$. Then for $W^X_x$-a.e. $\omega: [0,\infty) \rightarrow X$, letting $h_t = h(\omega\rest{[0,t]})$, we have that 
\[   \limsup _{t\rightarrow \infty}  -\frac{\log \nu_{\omega(t)} (h_t (C_x)) }{t}  \leq A. \] 
\end{lem} 

\begin{proof} We first work with foliated Brownian motion. 
Let us introduce the family of  functions $(L_t)_{t\geq 0}$, defined on $\om^{\mathcal{F}}$ by 
\[ L_t (\omega ) = - \log \frac{ (h_t)^{-1} \nu_{\omega(t)} }{\nu_{\omega(0)}} (\omega (0) ) . \]
It is immediate that $L=( L_t  )_{t\geq 0}$ is a cocycle, namely it satifies the relations 
\[  L_{t+s} (\omega) = L_s (\omega) + L_t (\sigma _s (\omega)) , \text{ for every } s,t\geq 0 \text{ and } 
\omega \in \Omega^{\mathcal{F}}. \]
Moreover, in terms of the densities $\varphi$, $L_t$ expresses as
 $ L_t (\omega ) = - \log \frac{\varphi (\omega(t))}{\varphi(\omega(0))}$, hence by   applying the 
  Harnack inequality $\norm{\nabla_\mathcal{F} \log \varphi}\leq 1$  we obtain  the estimate 
\begin{equation}\label{eq:Harnack}  |L_t(\omega) | \leq \text{length} (\omega\rest{[0,t]}), \text{ for every } t\geq 0 \text{ and } \omega \in \Omega^{\mathcal{F}}\end{equation}
(recall that the length in question  here is the homotopic length of $\pi(\omega)$). 
 Thus   the super-exponential decay of the heat kernel on the upper half plane \cite[\S 5.7]{davies}  implies that 
 $L_t$ is $W^{\mathcal F}_\mu$-integrable. %\note{j'ai ajouté "-integrable" c'est ok?} 
The subadditive ergodic theorem applied  to the cocycle $L$ and   the ergodic system $(\Omega ^{\mathcal{F}}, \sigma , W^{\mathcal{F}}_\mu)$  shows that  $\frac{L_t(\omega)} {t}$ converges a.s. 
to a limit independent of $\omega$. Arguing exactly as in 
Lemma \ref{l:computation lyapunov exponent} shows that this limit equals $A$, that is, 
\begin{equation} \label{eq:cocycle L} \frac{L_t(\omega)} {t} \underset{t\rightarrow \infty}\longrightarrow   A, 
\text{ for $W^{\mathcal{F}}_\mu$-a.e. } \omega \in \Omega^{\mathcal{F}}. \end{equation}
From this, we infer that for a.e. $x\in M_\sigma$, and $W_x^{\mathcal{F}}$ a.e. $\omega$, 
 if $C_x \subset \mathbb P^1 _x$ is a measurable subset such that $\nu_x(C_x ) >0$, then   
\begin{equation}\label{eq:inversion limit}  \lim _{t\rightarrow \infty} \frac{1}{\nu_x(C_x )} \int _{C_x } -\frac{1}{t} \log \left( \frac{ (h_t)^{-1} \nu_{\omega(t)} }{\nu_{\omega(0)}}  (\omega(0) ) \right) d\nu_{\omega(0)}  = A .\end{equation}
This follows from \eqref{eq:cocycle L}, Fubini's theorem and 
   the dominated convergence theorem, since for a generic $\omega$, 
  $\text{length} (\omega\rest{[0,t]})= O(t)$ so by \eqref{eq:Harnack} the argument of the integral 
 in \eqref{eq:inversion limit} is bounded independently of $t$. 
  We now use  the convexity of the function $-\log$  which  by Jensen's inequality implies that 
\[  \frac{1}{\nu_x(C_x )} \int _{C_x } -\frac{1}{t} \log \left( 
\frac{ (h_t)^{-1} \nu_{\omega(t)} }{\nu_{\omega(0)}}(\omega(0) )\right)  d\nu_{\omega(0)} \geq 
- \frac{1}{t} \log \left( \frac{\nu_{\omega(t)} (h_t (C_x) ) }{\nu_{\omega(0)} (C_x) } \right) , \]
and so we deduce that for a.e. $x\in X$, as soon as $\nu_x(C_x) >0$, we have that 
\[   \limsup _{t\rightarrow \infty} - \frac{1}{t} \log \nu_{\omega(t)} (h_t (C_x) ) \leq A. \]

 Notice that this property makes no reference to the starting point in the fiber,  so it can be stated as well for 
 a.e. $x\in X$ and $W^X_x$ a.e. $\omega\in \om^X_x$

To finish the proof it remains to  see that this statement holds for  \textit{every} $x$. 
For this, we first  observe that  if $C_x$ has positive measure, then 
 for $W^X_x$-a.e. $\omega$,   $\nu_{\omega(1)} ( h_1 (C_x)) >0$. Furthermore, 
  the distribution of $\omega(1)$ is absolutely continuous, 
 so that  the previous estimates hold when $x$ is replaced by $\omega(1)$. The assertion then follows from  
 the Markov property of Brownian motion. 
\end{proof}

Let us  resume the proof of Proposition \ref{p:sup dimension}. Fix $\e>0$, and put $\eta = \frac{4\chi+A}{4\chi^2} \e$. 
By Lemma \ref{l:technical step}, for $W^{\mathcal F}_{\mu}$-a.e. $\omega$,    the convergence 
 $\frac{1}{t} \log \norm{Dh_t (y)}\underset{t\rightarrow\infty}\longrightarrow  \lambda$, %\note{j'ai ajout\'e un log ok?}
 holds  uniformly on compact subsets of $\mathbb P_{\omega(0)} ^1 \setminus \set{r(\omega)}$. 
  So if we let $R = \frac{1}{2}d_\pu (\omega(0), r(\omega))$,   there exists $t_1 = t_1 (\omega, \varepsilon)$ such that if 
  $ t\geq t_1$, 
\[   h_t ( B(\omega(0), R ))\subset  B( \omega(t), e^{(\lambda + \eta)t} ) . \] 
On the other hand by the previous lemma,  for $W_\mu^{\mathcal{F}}$ 
a.e. $\omega$, there exists $t_2 = t_2 (\omega, \varepsilon)$ such that for $t\geq t_2$, 
\[  \nu_{\omega(t)} ( h_t( B(\omega(0), \eta ) ) \geq  e^{ -(A +\eta) t} .\]
So  we infer that for $t\geq \max(t_1, t_2)$,  
\begin{equation}\label{eq:measures of images}   \nu_{\omega(t)} (B( \omega(t), e^{(\lambda + \eta)t} ) ) \geq  e^{ -(A +\eta) t} .\end{equation}
For every $t>0$ let $\Omega_t$ be the set of paths $\omega\in \Omega$ such that $\max (t_1,t_2) \leq t$. Clearly $W^{\mathcal{F}}_\mu(\Omega_t)\geq 1-\e$    for $t\geq t(\e)$.   
Setting $r = e^{ (\lambda + \eta) t} $, if $\omega\in \om_t$ and $x=\omega(t)$,  for $t\geq t(\e)$ 
we have that 
\[  \nu_{ {x} } (B ( {x} , r ) ) \geq r ^{-\frac{A + \eta}{\lambda + \eta}}  = r^{\frac{A + \eta}{2\chi - \eta}} . \]    

This finishes the proof  since the image of $W^{\mathcal{F}}_{\mu}$ under 
$\omega \mapsto \omega(t)$ is the measure $\mu$, and 
$\eta$ was chosen so that     $\frac{A + \eta}{2\chi - \eta}<\frac{A}{2\chi}+\e$.
\end{proof} 

An estimate  similar to that of Proposition \ref{p:sup dimension} holds in every fiber. 

\begin{cor} \label{c:volume balls}
Let $x\in X$ and $\varepsilon >0$. There exists $r_\e>0$ such that if $0<r <r_\e$ then 
\[  \nu_x \left( \tilde x \in \mathbb P _x^1 \ :\ \nu_x ( B_\pu(\tilde x , r ) ) \geq r ^{\frac{A}{2\chi}+2\varepsilon} \right) \geq 1-\varepsilon. \]
\end{cor}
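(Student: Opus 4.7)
The plan is to upgrade the $\mu$-integrated estimate of Proposition \ref{p:sup dimension} to a pointwise-in-$x_0$ estimate, at the cost of doubling the exponent. The two main ingredients are the Harnack inequality for the harmonic family $(\nu_\tau)_{\tau\in\widetilde{X}}$ and the mean value property for positive harmonic functions on hyperbolic balls; the extra factor of $2$ in the exponent is exactly the slack needed to absorb the (multiplicative) Harnack constant.

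Fix $x_0\in X$ and lift to $\tau_0\in\widetilde{X}$. For any Borel set $S\subset\mathbb P^1$, the function $\tau\mapsto\nu_\tau(S)$ is positive and harmonic on $\widetilde{X}$ by Proposition \ref{p:harmonic measures}, hence the Harnack inequality yields a constant $C(R)$ with $C(R)^{-1}\nu_{\tau_0}(S)\leq\nu_\tau(S)\leq C(R)\nu_{\tau_0}(S)$ for all $\tau\in B_{\widetilde{X}}(\tau_0,R)$. Denote the Corollary's bad set by
\[
\tilde B_r=\{\tilde x\in\mathbb P^1_{x_0}\,:\,\nu_{x_0}(B_\pu(\tilde x,r))<r^{A/(2\chi)+2\varepsilon}\}
\]
and the Proposition's bad set by $B_{r'}(\tau)=\{\tilde x\,:\,\nu_\tau(B_\pu(\tilde x,r'))<r'^{A/(2\chi)+\varepsilon}\}$. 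Specializing the Harnack comparison to $S=B_\pu(\tilde x,r')$, and absorbing the Lipschitz variation of the fiber spherical metric over $B_{\widetilde{X}}(\tau_0,R)$ into a replacement of $r$ by a comparable $r'=r/c_m$ (with $c_m=c_m(R)$ close to $1$), I find that for all $r<r_0(R,\varepsilon)$ one has $\tilde B_r\subset B_{r'}(\tau)$ for every $\tau\in B_{\widetilde{X}}(\tau_0,R)$; the condition forcing this is $C(R)\,c_m^{A/(2\chi)+\varepsilon}\,r^\varepsilon<1$.

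Since $\tau\mapsto\nu_\tau(\tilde B_r)$ is a positive harmonic function of $\tau$, the mean value property on hyperbolic balls (obtained from the rotational symmetry of hyperbolic Brownian hitting distributions on concentric spheres, integrated in radius) gives $\nu_{\tau_0}(\tilde B_r)=\vol(B_R)^{-1}\int_{B_{\widetilde{X}}(\tau_0,R)}\nu_\tau(\tilde B_r)\,d\vol$. Combined with the inclusion just established,
\[
\nu_{\tau_0}(\tilde B_r)\;\leq\;\frac{1}{\vol(B_{\widetilde{X}}(\tau_0,R))}\int_{B_{\widetilde{X}}(\tau_0,R)}\nu_\tau(B_{r'}(\tau))\,d\vol(\tau).
\]
The integrand $\tau\mapsto\nu_\tau(B_{r'}(\tau))$ is $\pi_1(X)$-invariant: the holonomy acts by isometries on the fiber spherical metric, so $B_{r'}(\gamma\tau)=\hol(\gamma)(B_{r'}(\tau))$, which together with $\nu_{\gamma\tau}=\hol(\gamma)_*\nu_\tau$ forces $\nu_{\gamma\tau}(B_{r'}(\gamma\tau))=\nu_\tau(B_{r'}(\tau))$. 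It therefore descends to $x\mapsto\nu_x(B_{r'}(x))$ on $X$. Covering $B_{\widetilde{X}}(\tau_0,R)$ by $N=N(x_0,R)<\infty$ translates of a fundamental domain and invoking the disintegration $\mu=\frac{1}{\vol(X)}T\wedge\vol_P$ to read Proposition \ref{p:sup dimension} as $\int_X\nu_x(B_{r'}(x))\,d\vol_P(x)<\varepsilon_0\vol(X)$, I obtain $\nu_{\tau_0}(\tilde B_r)<\frac{N\vol(X)}{\vol(B_{\widetilde{X}}(\tau_0,R))}\varepsilon_0$. A final rescaling $\varepsilon_0:=\varepsilon\,\vol(B_R)/(N\vol X)$ in the input to Proposition \ref{p:sup dimension} delivers the Corollary.

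The main technical obstacle is the interplay between the $\tau$-dependent fiber spherical metric $\|\cdot\|_\tau$ (which enters the definition of the balls $B_\pu(\tilde x,r)$) and the Harnack comparison, which compares the measures $\nu_\tau$ directly on a common $\mathbb P^1$. The exponent slack $\varepsilon\to 2\varepsilon$, coupled with the freedom to replace $r$ by the comparable $r/c_m$, is precisely what makes the Harnack constant and the bounded metric distortion compatible without loss of dimension information.
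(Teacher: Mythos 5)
Your proof is correct and follows essentially the same route as the paper: both deduce the fixed-fiber statement from Proposition \ref{p:sup dimension} by exploiting the bounded distortion between nearby fibers (Harnack-type comparability of the harmonic measures $\nu_\tau$ together with the Lipschitz comparability of the fiber spherical metrics), with the passage from $\varepsilon$ to $2\varepsilon$ absorbing the distortion constants. The only difference is bookkeeping: where the paper transports the good set from a nearby generic fiber by a holonomy map (a Fubini-type selection), you include the bad set over $x_0$ into the bad slices over all fibers within bounded leafwise distance and then average, using the hyperbolic mean value property of $\tau\mapsto\nu_\tau(\cdot)$ and a fundamental-domain count to feed in the $\mu$-integrated estimate --- a valid, slightly more quantitative implementation of the same idea.
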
%\note{je mets un $2\e$ par s�curit�. RD}

\begin{proof} This is due to the fact that the holonomy map $h_\gamma$ corresponding to a path $\gamma: [0,1]\rightarrow X$ of 
length $\ell$ is bilipschitz with constant depending only on $\ell$, and moreover it sends the measure $\nu_x$ to a measure absolutely 
continuous with respect to $\nu_y$, whose density is bounded from above and below by positive 
constants depending only on $\ell$. Now if $\ell$ is fixed,  the proportion of points in a given fiber lying at leafwise distance $\ell$ from a point   satisfying the conclusion of Proposition  \ref{p:sup dimension} tends to 1 when $\e\cv0$, so we are done. 
 \end{proof}

We are now ready to finish the proof of Theorem \ref{theo:dimension}. Fix a real number $s> \frac{A}{2\chi}$ and  $\e$ such that 
$0<2 \e < s-  \frac{A}{2\chi}$. With $r_\e$ as in Corollary \ref{c:volume balls}, for every $r<r_\e$, 
consider the set $$E_{r,\e} = \set{\tilde x\ \in \mathbb{P}^1_x ,  \ \nu_x ( B(\tilde{x},r) \geq r ^{\frac{A}{2\chi} + 2\varepsilon}}.$$
From  Corollary \ref{c:volume balls} we know that $\nu_x (E_{r/5 ,\e})> 1-\e$. Furthermore, 
a classical covering argument gives an estimate of the $s$-dimensional Hausdorff measure of $E_{r/5,\e}$.  Indeed by the Vitali covering lemma there exists a covering of $E_{r/5, \e}$ by balls $B_\pu(\tilde x_i, r)$ centered on $E_{r/5, \e}$ and
 of radius $r$ such that the corresponding balls of radius $\frac{r}{5}$ are disjoint. This disjointness together with the measure estimate 
 imply that this set of balls has cardinality at most $$N \leq \lrpar{\frac{r}{5}}^{-\lrpar{\frac{A}{2\chi}+2\e}}.$$ Therefore, 
\begin{equation}\label{eq:hs}
\mathcal{H}_s(E_{r/5 ,\e}) \leq \sum_i (2r)^s \leq 2^s 5^{\frac{A}{2\chi}+2\e} r^{s- \frac{A}{2\chi}-2\e}.
\end{equation}
 
 We now set  $\e_n  =2^{-n}$, $r_n = r(\e_n)/5$ , and put $F_k = \bigcap_{n\geq k} E_{r_n,\e_n}$. Since for every $n \geq k$, 
 $F_k \subset E_{r_n, \e_n}$, from \eqref{eq:hs} we infer that $\mathcal{H}_s(F_k) = 0$. On the other hand 
 $\nu_x(F_k)\geq 1-\lrpar{\unsur{2}}^{k-1}$, so if we let $F = \bigcup_{k\geq 1} F_k$ we have that $\nu_x(F)=1$ and $\mathcal{H}_s(F) =0$, hence  $\mathrm{dim}_H(\nu_x)\leq s$. Since $s> \frac{A}{2\chi}$ was arbitrary, we conclude that 
  $\mathrm{dim}_H(\nu_x)\leq \frac{A}{2\chi}\leq \unsur{2\chi}$, as asserted. 
  
  In particular, it follows from Theorem \ref{theo:formula} that $\mathrm{dim}_H(\nu_x)\leq 1$, 
  and if equality holds then  $\delta=0$ and $A = 1$. 
% so we are reduced 
%to the case where the harmonic measure is the classical harmonic measure 
% of the limit set of a Kleinian group. In this context it is known  that the harmonic measure has dimension 1 if and only if
% the developing map is injective \cite{???}. 
Lemma \ref{lem:zero degree} below shows that if $\mathrm{dim}_H(\nu_x) =  1$, then $\sigma\in \overline{B(X)}$. 
Conversely, if $\sigma\in \overline{B(X)}$, it follows from  Makarov's celebrated theorem \cite{makarov} that the harmonic measures are supported by a set of dimension 1 (the measures $\nu_x$ coincide with the classical harmonic measure in this case, 
see the next lemma). This completes the proof of the theorem. \qed
  
\begin{lem}\label{lem:zero degree}
Let $\sigma$ be  a parabolic projective structure with $\deg(\sigma) = 0$. Let $(m_y)_{y\in \dev(\widetilde X)}$ 
be the usual harmonic measure of 
the open set $\dev(\widetilde X)\subset \pu$.  
Then for every $x\in \widetilde X$, $\nu_x = m_{\dev(x)}$. 

In addition the action $A$ equals $1$ if and only if $\dev$ is injective, that is, $\sigma\in \overline{B(X)}$ 
(see the discussion on the density theorem in \S \ref{subs:Teichmuller}). 
\end{lem}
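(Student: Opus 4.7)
The hypothesis $\deg(\sigma)=0$ forces $\sigma$ to be of covering type by Proposition~\ref{p:vanishing}: $\dev:\widetilde X\to\om:=\dev(\widetilde X)\subsetneq\pu$ is a covering with deck group equal to $\ker(\hol)$. For the first assertion I would observe that since this deck group preserves the hyperbolic metric of $\widetilde X\simeq\hh$, the latter descends to the Poincar\'e metric on $\om$, so $\dev$ is a local isometry and pushes Brownian paths in $\widetilde X$ forward to hyperbolic Brownian paths in $\om$. The identity $\nu_x=m_{\dev(x)}$ then reduces to the standard property that for $W_y$-a.e.\ hyperbolic Brownian path $\eta$ starting at $y\in\om$, the time averages $\unsur{t}\int_0^t\delta_{\eta(s)}\,ds$ weakly converge to the Dirac mass at the a.s.\ exit point $\eta_\infty\in\fr\om$; in the disk model this is immediate from the Euclidean convergence $\eta(t)\to\eta_\infty$ together with the fact that $\eta$ spends most of its time in arbitrarily small Euclidean neighborhoods of $\eta_\infty$. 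Since the distribution of $\eta_\infty$ starting at $y$ is by definition $m_y$, Definition-Proposition~\ref{defprop:harmonic measure} gives $\nu_x=m_{\dev(x)}$.

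For the second assertion, my plan is to compute the leafwise density of $T$ explicitly on the universal cover $\hh\times\pu$. Starting from $\nu_\tau=\unsur{\pi}\Phi_*\lrpar{\frac{\Im\tau\,da}{\abs{\tau-a}^2}}$ (Proposition~\ref{p:harmonic measures}) and decomposing according to the preimages of $z\in\pu$ under the Furstenberg map $\Phi$, I would show that the density of $\widetilde T$ along the leaf $\hh\times\set{z}$ is the sum of Poisson kernels $\widetilde\varphi(\tau)=\sum_{a\in\Phi^{-1}(z)}P(\tau,a)$. I would then revisit the Harnack bound used in Proposition~\ref{p:frankel} via the Schwarz--Pick argument: writing the positive harmonic function $\widetilde\varphi$ as $\Im f$ for some holomorphic $f:\hh\to\hh$, the inequality $\abs{\nabla^{\rm hyp}\log\widetilde\varphi}\leq 1$ is saturated pointwise iff $f$ is an automorphism of $\hh$, equivalently iff $\widetilde\varphi$ is a \emph{single} Poisson kernel (possibly the one at $\infty$). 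Combined with the formula $A=\int\abs{\nabla^{\rm hyp}\log\varphi}^2\,d\mu$ from the proof of Proposition~\ref{p:frankel}, the condition $A=1$ is therefore equivalent to $\Phi^{-1}(z)$ being a singleton for $\nu_{\tau_0}$-a.e.\ $z$, i.e.\ to $\Phi$ being essentially injective.

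It remains to match essential injectivity of $\Phi$ with injectivity of $\dev$. The equivariance $\Phi\circ\gamma=\hol(\gamma)\circ\Phi$ forces $\Phi(\gamma a)=\Phi(a)$ for every $\gamma\in\ker\hol$, giving ``$\Phi$ injective $\Rightarrow\hol$ faithful''. Conversely, since $\hol(\pi_1(X))$ acts properly discontinuously on $\om$ it is automatically discrete, so when it is moreover faithful Remark~\ref{r:measurable isomorphism} (Ledrappier's theorem) yields the injectivity of $\Phi$. As $\dev$ is injective iff its deck group $\ker\hol$ is trivial, this chain of equivalences gives $A=1$ iff $\dev$ is injective. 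The main obstacle I anticipate is the explicit identification $\widetilde\varphi(\tau)=\sum_{a\in\Phi^{-1}(z)}P(\tau,a)$ of the leafwise density: the pushforward $\Phi_*(da)$ may be very singular on the limit set, and one must justify carefully that the harmonic disintegration of $\widetilde T$ along the leaves of $\widetilde{\mathcal F}$ produces this genuine sum of harmonic functions rather than a Radon--Nikodym ratio $d\nu_\tau/d\nu_{\tau_0}$ that would only be a quotient of harmonic functions.
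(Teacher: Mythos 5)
Your first half is essentially the paper's own argument: observing that $\dev$ is a local isometry from $(\widetilde X,\mathrm{Poincar\acute{e}})$ onto $\dev(\widetilde X)$ with its Poincar\'e metric, and then invoking the fact that hyperbolic Brownian motion on a planar hyperbolic domain converges to a boundary point whose law is the classical harmonic measure, is the same conformal-invariance/time-change computation the authors carry out (they apply the time change to $\dev$ itself rather than to the identity map of $\dev(\widetilde X)$). Just note that ``the distribution of $\eta_\infty$ is by definition $m_y$'' is not a definition: $m_y$ is the exit law of \emph{spherical} Brownian motion, and identifying it with the law of the limit point of \emph{hyperbolic} Brownian motion is exactly the conformal-invariance step, which you are implicitly quoting.

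The second half has a genuine gap at its central step, the one you yourself flag: the identification of the leafwise density as the unweighted sum $\sum_{a\in\Phi^{-1}(z)}P(\tau,a)$ of Poisson kernels. This is not correct as stated and cannot be justified: $\Phi^{-1}(z)$ is only a measurable fibre (possibly uncountable, and an infinite unweighted sum of Poisson kernels need not even converge), and what the disintegration of $\widetilde T$ actually yields is the Radon--Nikodym derivative $\varphi(\tau,z)=\frac{d\nu_\tau}{d\nu_{\tau_0}}(z)$, which, after disintegrating $m_{\tau_0}$ over $\Phi$, equals $\int \frac{P(\tau,a)}{P(\tau_0,a)}\,d\lambda_z(a)$ where $\lambda_z$ is the conditional measure of $m_{\tau_0}$ on $\Phi^{-1}(z)$ --- an integral of minimal harmonic functions against a measure you do not control, not a sum of kernels. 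Your scheme can be repaired: by uniqueness in the Herglotz/Martin representation, $\varphi(\cdot,z)$ is extremal iff $\lambda_z$ is a Dirac mass, so $A=1$ iff the conditionals are a.e.\ Dirac, i.e.\ iff $\Phi$ is essentially injective, after which your Ledrappier/equivariance dichotomy concludes. But be aware that the paper avoids this disintegration altogether and is shorter: if $\dev$ is not injective, the leafwise density is invariant under the nontrivial normal subgroup $\ker(\hol)$, whose limit set is all of $\fr\widetilde X$ and which therefore contains nonabelian free subgroups, contradicting the fact that extremal positive harmonic functions have abelian stabilizer, whence $A<1$; if $\dev$ is injective, the foliated bundle is measurably conjugate to the Fuchsian one (Remark \ref{r:measurable conjugacy}, which is where Ledrappier's theorem enters), whose densities are Poisson kernels, so $A=1$. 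Finally, both you and the paper use that the deck group of the covering $\dev:\widetilde X\to\dev(\widetilde X)$ is exactly $\ker(\hol)$ (so that, when $\deg(\sigma)=0$, $\dev$ is injective iff $\hol$ is faithful); this is legitimate, but it is a fact about covering structures requiring a short argument, not a tautology, so do not present it as definitional.
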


\begin{proof} The first part is proved by using the  conformal invariance of Brownian motion. Indeed, any Brownian path $\eta:[0,\infty[$ 
(relative to the  spherical metric on $\mathbb P^1$, say) starting at $y$ hits a.s. the boundary of $\dev(\widetilde{X})$ at a first moment 
$S>0$. We denote by $p= \eta (S)$. The distribution of $p$ is by definition the harmonic measure $m_y$. The path $\eta\rest{[0,S)}$ can 
be lifted to a continuous path $\widetilde{\eta} : [0,S) \rightarrow \widetilde{X}$ starting at $\widetilde{\eta}(0)= x$, and satisfying  $\dev \circ 
\widetilde{\eta} =\eta$. Let  $\omega :[0,T)\rightarrow \widetilde{X}$ be the reparametrization of $\widetilde{\eta}$ defined by $ \omega (t) = 
\widetilde{\eta}(s) $, where
\begin{equation} \label{eq:conformal invariance} t = \int _0 ^s \norm{D \dev^{-1}(\eta(u)) }^2 du. \end{equation}
%\note{je suis �tonn� par ce carr�. (il est bien dans carne) R}
Here $\dev^{-1}$ is understood as the analytic continuation along $\eta$ of the inverse of $\dev$ defined at the neighborhood of $y$ and 
such that  $\dev^{-1} (y)=x$. The conformal invariance can be stated in the following form: $\omega$ is a model for a Brownian path 
starting at $x$ for the Poincar\'e metric  (see e.g.  \cite[Section 1]{carne}). Since $\omega$ tends to infinity in $\widetilde{X}$ when $t$ 
tends to $T$, we see that $T=+ \infty$ a.s. Moreover,   a.s. $\lim_{t\rightarrow +\infty} \dev(\omega(t)) = p$, which implies using Definition-
Proposition \ref{defprop:harmonic measure} that $m_y= \nu_x$.

\medskip

Let now address  the second part of the lemma. We first prove that $A < 1$ if $\dev$ is not injective. We  need the concept of an extremal positive harmonic function on the universal covering of $\widetilde{X}$. Such a function is (by definition) the composition of a biholomorphism from $\widetilde{X}$ to $\mathbb H$ with  the imaginary part function $\Im : \mathbb H \rightarrow (0,\infty)$. It will be important to notice that the subgroup of $\text{Aut}(\widetilde{X})$ that preserves an extremal positive function is abelian (this is the group of translations in the coordinate where the function is the imaginary part). 
The following statement is a consequence of the case of equality in the Schwarz-Pick lemma: 

\textit{A function $\varphi : \widetilde{X} \rightarrow (0,\infty)$ is an extremal positive harmonic function if and only if at some (and hence  all) point $x\in \widetilde{X}$  one has $\norm{\nabla \log \varphi (x)} = 1$. }

Now assume that $\dev$ is not injective. In such a situation, the covering group $\text{ker} (\hol)$ is a non trivial normal subgroup of $\pi_1(X)$. Recall  (item (ii) of Proposition \ref{p:harmonic measures}) 
that the family of harmonic measures $\{ \nu_x \}_{x\in \widetilde{X}} $ satisfies the equivariance relation $\nu_{\gamma x} = \hol (\gamma) _* \nu_x$ for every $x\in \widetilde{X} $ and every $\gamma \in \pi_1(X)$. Hence, the density of the disintegration of $\widetilde{T}$ along 
the leaves is a function $\varphi  : \widetilde{X}\times \mathbb P^1 \rightarrow (0,\infty)$ which belongs to $L^1_{loc} (\text{vol} \otimes \nu)
$  and is invariant under the group $\text{ker} (\hol)$. 
This subgroup being non trivial and normal, its limit set as a subgroup of isometries of $\text{Aut}(\widetilde{X})$ for the 
Poincar\'e metric is the whole $\partial \widetilde{X}$. 
In particular, it contains non abelian free subgroups \cite{beardon}.
%see \cite{gromov} or \cite{ghys de la harpe}\note{une ref plus basique sur les groups kleiniens (beardon) devrait suffire!}. 
As a consequence, the density  $\varphi(\cdot, z)$ of the disintegration  of $T$ cannot be extremal. In 
particular, $\norm{\nabla_{\mathcal F} \log \varphi }<1$ a.s. This proves that $A < 1$, as required. 

It remains to prove that if $\dev$ is injective, then $A = 1$. In this case, the holonomy representation is injective with image a discrete 
subgroup of $\text{PSL} (2,\mathbb C)$. In particular, using Remark \ref{r:measurable conjugacy} we see that 
 the foliated bundle $(M_\sigma,\mathcal F_\sigma, T_\sigma)$ is 
 measurably conjugate to the bundle $(M_{\sigma_{\rm Fuchs}},\mathcal F_{\sigma_{\rm Fuchs}}, T_{\sigma_{\rm Fuchs}})$ where $\sigma_{\rm Fuchs}$ is the uniformizing structure on $X$. Hence $A =A_{{\rm Fuchs}}$. 
 But the densities of the disintegration of $T_{\rm Fuchs}$ along the leaves are
 given by the Poisson kernel in the uniformization coordinates, in particular these are extremal positive harmonic functions. 
 We conclude that $A = A_{\rm Fuchs} = 1$, and the proof of the lemma is complete. 
\end{proof}

%\note{Need to finish here case of equality in theorem B; just apply Makarov theorem. I propose to transform the rmk into that proof. }
%\note{j'ai fait la chose suivante}

\begin{rmk}\label{rmk:makarov}
Let $\om_0 \subset \pu$ be any component of the discontinuity set of a finitely generated 
Kleinian group $\Gamma$. Using Theorem \ref{theo:dimension} 
 we can recover the classical Jones-Wolff theorem \cite{jones wolff} that the dimension of the harmonic measure of $\partial \om_0$ is bounded by $1$, with strict inequality unless $\om$ is simply connected (see \cite{puz} for another dynamical proof of this fact). 
Indeed, from Lemma \ref{lem:zero degree}, it is enough to show   there exists a Riemann 
surface $X$ of finite type and a parabolic projective structure on $X$ with zero degree such that $\om_0 = \dev(\widetilde X)$.
This simply follows from the Ahlfors finiteness theorem:   first take a finite index torsion free subgroup $\Gamma' \subset \Gamma$, and 
define $X:= \Gamma_0' \backslash \om_0$ where $\Gamma_0'\subset \Gamma'$ is the stabilizer of $\om_0$. 
 \end{rmk}

\section{Applications to Teichm\"uller theory}\label{sec:teich}

\subsection{Preliminaries}\label{subs:Teichmuller}
All this is well-known, but not so easy to locate in the literature when $X$ is non-compact. 

%Recall that we assume $X$ to be a Riemann surface of finite type, i.e. biholomorphic to $\overline{X} \setminus P$ where $\overline{X}$ is a 
%compact Riemann surface and $P\subset \overline{X}$ is a finite set of points called punctures. A $\mathbb P^1$-structure $\sigma$ on $X$ 
%is \textit{parabolic} if for every point $p\in P$, there is a holomorphic coordinates $x: N(p) \rightarrow \mathbb D$ defined on a neighborhood 
%of $p$, such that $x(p)= 0$, and such that a developing map for the projective structure $\sigma$ on $N(p) \setminus p$ is defined by the 
%multivalued map $x\mapsto \log x$. \note{je propose de sabrer un peu ici. R}

Recall that $X$ is assumed to be a Riemann surface of finite type, that is biholomorphic to 
$\overline{X} \setminus P$ where $\overline{X}$ is compact and $P$  is a finite set of punctures.
%compact Riemann surface and $P\. 
Introduce a projective structure $\overline{\sigma}$  on $\overline{X}$, which can  always can be done e.g. by uniformization.% (See also \cite{gunning}, or \cite[VIII.4.1]{unif} for  a proof independent of the uniformization theorem.) 
 For every projective structure $\sigma$ on $X$, consider the holomorphic 
%meromorphic 
%\note{meromorphe plutot que holomorphe, non? \`a priori la structure projective sur $X$ induit une diff\'erentielle quadratique holomorphe sur $X$, et on ne sait m\^eme pas qu'elle s'\'etend m\'eromorphiquement \`a $\overline{X}$. C'est la raison pour laquelle je propose ce qui est \'ecrit l\`a. }
quadratic differential on $X$ defined by 
\[   q  = \{  w , x \} dx^2 \]
where $x$ and $w$ are projective coordinates for the projective structures $\overline{\sigma}$ and $\sigma$ respectively, 
and as usual $\{ w, x \}$ is the Schwarzian derivative. By the cocycle property of  the Schwarzian, we infer that 
 $\{ w,x\}dx^2 = \{ z, x\} dx^2 + \{ w,z \} dz^2  $. Remembering  that $\{ w,x\}$ vanishes if $w$ is a Moebius transformation, we see that 
 the differential $q$ is well-defined, that is 
  does not depend on the chosen coordinates $x$ and $w$. 
  Moreover, a result due to Fuchs and Schwarz shows that a projective structure on $X$ is parabolic if and only if the Laurent series 
  expansion of $q$ at the neighborhood of every point of $P$ takes the form $q(x) = \big(\frac{1}{2x^2} + \text{l.o.t.}\big)dx^2$, see~\cite[Th\'eor\`eme 10.1.1, p. 291]{unif}. Hence, the space of parabolic projective structures on $X$ is an complex affine
  space directed by the set of meromorphic quadratic differentials having poles on $P$ of order at most $1$. 
  This space is the set of sections of the line bundle $L = 2K + O(P)$, where $K$ is the canonical divisor of $\overline{X}$. 
  By Riemann-Roch 
\[  h^0 (\overline{X}, L) - h^0 (\overline{X}, K - L ) = \mathrm{deg} (L) + 1 - g .\]
Since $K-L= -K-O(P)$ has no non trivial sections, and that $\mathrm{deg} (L) = 4g - 4 + |P|$, we deduce 
\[ h^0 (\overline{X}, L) = 3g-3 + |P| .\]
Thus the set of parabolic projective structures on $X$ is a complex affine space of dimension $3g- 3 + n$, where $n$ is the number of 
punctures. Observe that for the once punctured torus  or the fourth punctured   sphere, the  dimension equals $1$.

\medskip

We denote by $ T_{g,n}$ the Teichm\"uller space of equivalence classes of marked Riemann surfaces biholomorphic to a compact Riemann surface of genus $g$ punctured at $n$ distinct points. Here a marking of the Riemann surface $Y$ will refer to the data of a universal covering $\widetilde{Y} \rightarrow Y$ together with an identification of the covering group $\pi_1(Y)$ of this covering with $\pi_1(X)$. Two marked surfaces are considered as equivalent if there exists an equivariant holomorphic diffeomorphism between the universal covers. 

Let $Y \in \mathcal T_{g,n}$. Denote by $c(Y)$ the complex conjugation of $Y$, keeping the marking fixed. The Bers simultaneous
 uniformization theorem \cite[Theorem 1]{Bers} asserts that there exists a faithful discrete representation $\rho _{X,Y} : \pi_1 (X) \rightarrow \text{PSL} (2,\mathbb C)$, uniquely defined up to conjugation, such that
  the Riemann sphere admits  a $\rho$-invariant partition of the form  
\begin{equation}\label{eq:quasi-Fuchsian} \mathbb P^1  = D_X\cup \Lambda \cup D_Y,\end{equation} 
where $D_X$ and $D_Y$ are two simply connected domains, $\Lambda$ is a topological circle, and  such that the marked Riemann surfaces $\rho (\pi_1(X)) \backslash D_X$ and $\rho (\pi_1(X)) \backslash D_Y$ are respectively equivalent to $X$ and $c(Y)$. 
%(see \cite[Theorem 1]{Bers}). 
A representation with an invariant decomposition such as \eqref{eq:quasi-Fuchsian} is called quasi-Fuchsian. Thus an element $Y\in \mathcal T_{g,n}$ produces a parabolic $\mathbb P^1$-structure $b(Y)\in P(X)$ on $X$, defined as the $\rho$-equivariant identification between $\widetilde{X} $ and $D_X$. 

It turns out  that the map $Y\in T_{g,n} \mapsto b(Y) \in P(X)$  
 is a holomorphic embedding onto a bounded open subset $B(X)\subset  P(X)$, 
 known as the {\em Bers embedding} (or {\em Bers slice}) of  $T_{g,n}$. 
%by the  holomorphic dependence of the solution of the Beltrami equation with respect to parameters.%, see Ahlfors-Bers  \cite{ahlfors bers}. 
%It is called the Bers embedding. Its image $B(X):= B(T_{g,n})$ is a bounded open subset of $ P (X)$. 
The  holomorphicity of $b$ follows from the  holomorphic dependence of the solution of the Beltrami equation with respect to parameters.
The boundedness of $B(X)$ follows from Nehari's estimate %\cite{nehari} 
for the Schwarzian of univalent meromorphic functions defined on the hyperbolic disc, and its openness from 
the so-called Ahlfors-Weill 
extension lemma (see e.g. \cite{gardiner lakic}). 
% the openness from structural 
%stability of quasi-Fuchsian representations in the holomorphic family of parabolic type representations. 
%\note{etre plus precis ici} \note{n'est ce pas connu sous le nom ahlfors-weill? R}
 
 \medskip
 
Due to deep recent advances in Kleinian group and 3-manifold theory, there is a now
 good understanding of the structure of $\overline{B(X)}$. To 
be precise, the {\em density theorem} (formerly known as the Bers density conjecture), specialized to our context, asserts that 
$\sigma\in \overline B(X)$  if and only if $\dev_\sigma$ is injective. 
This means in particular that the image of $\dev$ is a simply connected component of the discontinuity set of $\hol_\sigma(\pi_1(X))$, which 
uniformizes $X$.  An equivalent formulation 
is that $\sigma\in \overline{B(X)}$ if and only if $\deg(\sigma)=0$ and  $\hol_\sigma $ is faithful. 
When $X$ is compact, 
this was explicitly proved by Bromberg \cite{bromberg annals}. In the general case, this statement is generally accepted by the experts
 as being a 
consequence of the ending lamination theorem of Minsky \cite{minsky} and Brock, Canary and Minsky \cite{bcm}
(see Ohshika \cite{ohshika} and Namazi-Souto \cite{namazi souto} for the derivation of the density theorem from the ending lamination 
theorem in the whole character variety). It seems, however, that no detailed proof of this fact has appeared yet.

\subsection{Holomorphic convexity of Bers slices}
Here we prove Theorem \ref{theo:convex}.  It is known that every component of the interior 
of a polynomially convex set is polynomially convex  (i.e. a Runge domain)
 (see e.g. \cite[Prop. 2.7]{fs2}), but the converse is false (this fails e.g. for $U = D(0, 2)\setminus D(1,1) \subset \cc$). 
In particular the second statement of the theorem (the polynomial convexity of 
$B(X)$) follows from the first (the polynomial convexity of  $\overline{B(X)}$). 
%Indeed, the interior of a polynomially convex compact set is polynomially convex. 

Actually, one may derive the polynomial 
convexity of $B(X)$ from a simple, direct argument. Indeed, consider in $X$  the set $\set{\deg = 0}$ of projective structures with vanishing degree. Equivalently, by Proposition \ref{p:vanishing} such a structure is of quotient type.
 It was shown in \cite{kra maskit} that 
$\set{\deg = 0}$ is a compact subset of $P(X)$. Since in $\cc^n$ convexity with respect to polynomials and psh functions coincide 
\cite[Thm 1.3.11]{stout}, we infer that    $\set{\deg=0}$  is polynomially convex. Now it is a result due to
 Shiga and Tanigawa \cite{shiga tanigawa} and Matsuzaki \cite{matsuzaki} that  
  the interior  in $P(X)$ of the set of projective structures with  discrete holonomy is the set of projective structures with quasifuchsian holonomy. Thus $\Int \set{\deg = 0} = B(X)$, and the polynomial convexity of $B(X)$ follows.  
 
 \medskip

We now turn to the polynomial convexity of $\overline{B(X)}$. 
A connected component of a polynomially convex set is polynomially convex 
 \cite[Cor. 1.5.5]{stout}, so it is enough to show that $\overline{B(X)}$  is a connected component of $\set{\deg = 0}$. This will be based on the following amusing lemma. 

\begin{lem}\label{lem:connex}
Let $(\rho_{\la})_{\la\in \La}$ be a holomorphic family of   representations of a finitely generated group $G$ into $\PSL$, parameterized by some   complex manifold $\La$. If $K\subset \La$ 
is a compact connected set of discrete and non-elementary representations then $\ker (\rho_\la)$ 
is constant over $K$. In  particular if $K$ contains a faithful representation, then all representations in $K$ are faithful. 
\end{lem}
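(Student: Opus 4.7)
The plan is to show that for each $g \in G$, the subset
$$U_g = \set{\la \in K : \rho_\la(g) = \mathrm{id}}$$
is both open and closed in $K$. By connectedness of $K$, this will force $U_g = \emptyset$ or $U_g = K$, so $\ker(\rho_\la) = \set{g \in G : U_g = K}$ is independent of $\la \in K$. The ``In particular'' part then follows immediately: if some $\rho_{\la_0}$ is faithful, then $U_g = \emptyset$ for all $g\neq e$, hence every $\rho_\la$ is faithful.

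Closedness of $U_g$ in $K$ is clear, since $\la \mapsto \rho_\la(g) \in \PSL$ is holomorphic (in particular continuous) and $\set{\mathrm{id}}$ is closed. The openness statement is the crux of the argument, and the plan is to derive it from Jørgensen's inequality.

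Fix $\la_0 \in U_g$. Using non-elementarity of $\rho_{\la_0}$, one can pick $h_1, h_2 \in G$ such that $\rho_{\la_0}(h_1)$ and $\rho_{\la_0}(h_2)$ are loxodromic with disjoint pairs of fixed points on $\pu$; by continuity these properties persist in a neighborhood of $\la_0$. Assume for contradiction that there is a sequence $\la_n \to \la_0$ in $K$ with $A_n := \rho_{\la_n}(g) \neq \mathrm{id}$, and set $B_n^i := \rho_{\la_n}(h_i)$; note that $A_n \to \mathrm{id}$. Each subgroup $\langle A_n, B_n^i\rangle$ is discrete, being contained in the discrete group $\rho_{\la_n}(G)$. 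If $\langle A_n, B_n^i\rangle$ is non-elementary, Jørgensen's inequality yields
$$\abs{\tr^2(A_n) - 4} + \abs{\tr(A_n B_n^i A_n^{-1} (B_n^i)^{-1}) - 2} \geq 1,$$
whose left-hand side tends to $0$ (since $A_n \to \mathrm{id}$), a contradiction. Otherwise $\langle A_n, B_n^i\rangle$ is elementary with $B_n^i$ loxodromic, which forces $A_n$ to preserve the pair of fixed points of $B_n^i$. Applying this for $i=1,2$ and using the disjointness of these fixed pairs for $n$ large, we see that $A_n$ preserves a set of $4$ distinct points on $\pu$. But since $A_n \to \mathrm{id}$ we have $\tr^2(A_n) \to 4$, so for $n$ large $A_n$ is not an involution (involutions in $\PSL\setminus\set{\mathrm{id}}$ satisfy $\tr^2 = 0$); then $A_n$ has at most $2$ fixed points and cannot preserve $4$ distinct points unless $A_n = \mathrm{id}$, again a contradiction.

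The main obstacle is this openness step: one must exclude the elementary alternative in Jørgensen's dichotomy, which is why two independent loxodromic auxiliary elements are needed, combined with the observation that small perturbations of the identity cannot be involutions.
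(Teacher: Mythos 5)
Your proof is correct in substance, but it takes a genuinely different route from the paper. The paper's argument is global: it first shows, by an analytic-geometry (Noetherian) argument, that only countably many subgroups of $G$ occur as kernels $\ker(\rho_\la)$, then proves that each level set $\set{\la\in K:\ \ker(\rho_\la)=H_i}$ is closed by invoking the Chuckrow (Margulis--Zassenhaus--J\o rgensen) theorem on algebraic limits of discrete faithful representations of non-radical groups, and finally applies Sierpi\'nski's theorem that a continuum cannot be partitioned into countably many ($\geq 2$) disjoint nonempty closed sets. You instead argue element by element, showing each $U_g=\set{\la\in K:\rho_\la(g)=\mathrm{id}}$ is clopen in $K$, with openness coming directly from J\o rgensen's inequality plus the classification of elementary discrete groups containing a loxodromic. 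Your route is more elementary (no Chuckrow, no Sierpi\'nski, no countability-of-kernels step) and in fact uses only connectedness of $K$, not compactness, so it proves a slightly stronger statement; what the paper's route buys is a structural picture (countably many possible kernels over all of $\La$, closedness of each kernel stratum) obtained without having to manufacture auxiliary loxodromic elements.

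One small repair is needed in your last step. The deduction ``$A_n$ is not an involution, has at most $2$ fixed points, hence cannot preserve $4$ distinct points unless $A_n=\mathrm{id}$'' is not literally valid: a non-involutive elliptic element of finite order can preserve a $4$-point set. What you should use is the stronger information you already have, namely that $A_n$ preserves \emph{each} of the two disjoint fixed-point pairs separately. Then either $A_n$ swaps one of the pairs, in which case it is an involution (a M\"obius map interchanging two points squares to the identity), contradicting $\tr^2(A_n)\to 4$; or $A_n$ fixes all four points and hence equals the identity, contradicting $A_n\neq\mathrm{id}$. With this one-line adjustment the argument is complete.
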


Admitting this result for the moment, let us finish the proof.
 Let $K$ be the connected component of $\set{\deg = 0}$ 
 containing $\overline{B(X)}$.  By Proposition \ref{p:vanishing} for every $\sigma\in K$, $\mathsf{hol}_\sigma$ is discrete.  
Applying  Lemma \ref{lem:connex}, we infer that all representations in $K$ are faithful. 
%We now appeal to a deep result: the density theorem
%(solving the so-called Bers density conjecture in this case) 
%asserts that in $P(X)$, the\note{Il faut parler de structures projectives et pas de repr\'esentations : il faut l'\'enonc\'e selon lequel le bord de $B$ est exactement l'ensemble des strctures avec developpante injective} 
%set of discrete and faithful representations is precisely $\overline{B(X)}$.
%This was proved by  Bromberg \cite{bromberg annals} when $X$ is compact and  follows from the work of
% \note{ca ne suffit pas..... Ajouter Namazi-Souto et Oshika}. 
By the density theorem (see the end of \S\ref{subs:Teichmuller}), 
  the inclusion  $K\subset \overline{B(X)}$ holds, hence  $K = \overline{B(X)}$, and the proof is complete.
\qed

\begin{proof}[Proof of Lemma \ref{lem:connex}] Without loss of generality we may assume that $\La$ is connected. 
Consider the set of subgroups $\ker (\rho_\la)$ as $\la$ ranges in $\La$. 
Our first claim is that  this set  is at most countable.
The argument is based on   basic finiteness (Noetherian) properties in analytic geometry. Let $\lo\in \La$ and put 
$K_0 = \ker (\rho_\lo)$.  Define
 $$Z'(K_0) = \set{\la\in \La,\ \forall g\in K_0, \rho_\la(g)  = \mathrm{id}},$$ and let $Z(K_0)$ be the component of $Z'(K_0)$ containing $\lo$.
  If $\la\in Z(K_0)$, $\ker(\rho_\la)  \supset K_0$
nevertheless  equality needn't hold.  On the other hand we observe that if $\la$ is a generic point in $Z(K_0)$, that is, chosen outside a countable family of  proper analytic subvarieties,  then $\ker(\rho_\la)  = K_0$. Indeed for every $g\in G\setminus K_0$, the set   
$\set{\la\in Z(K_0), \ \rho_\la(g) = \mathrm{id}}$ is a proper subvariety of $Z(K_0)$, since it does not contain $\lo$.  

Conversely, a similar argument shows that if $V\subset \La$ is any irreducible variety, the subgroup 
$$K(V)  = \set{g\in G, \forall \la\in V, \rho_\la(g) =\mathrm{id}}$$ is the   kernel of generic representations in $V$. 

So if $K_0$ is as above, $Z'(K_0)$ has at most countably many irreducible components, each of which associated 
with a generic kernel (for $Z(K_0)$, this  is precisely $K_0$).  Now we observe that 
locally $Z'(K_0)$ is defined by finitely many equations, that is 
 there exists a finite number of elements $g_i\in G$, $i=1\ldots N$,  such that  for $\La'\Subset \La$, 
 $$Z'(K_0) \cap \La' = \set{\la\in \La',\ \forall i =1\ldots N, \rho_\la(g_i)  = \mathrm{id}}.$$
This leaves only countably many possibilities for the generic kernels, and our claim is proved. 
%\note{en fait le th�or�me de scott demande la discr�tude donc pas la peine de le signaler}  

\medskip

Under the assumptions of the lemma, label all   kernels of representations in $K$ as $(H_i)_{i\in \nn}$ and write accordingly $K$ as a disjoint union $K = \bigcup K_i$, where $K = \set{\la,\ \ker (\rho_\la)= H_i}$. The next claim is that for every $i$, $K_i$ is closed. For this we use the precise version of the Chuckrow (Margulis-Zassenhaus-Jorgensen) theorem stated in \cite[Thm 8.4 p.170]{kapovich}: if $\rho_p$ is  a sequence of discrete faithful representations of some non-radical group $\Gamma$ into $\PSL$, algebraically converging to some representation $\rho$ of $\Gamma$, then  $\rho$  is also discrete and faithful. Recall that a group $\Gamma$ is said non-radical if it does not admit infinite normal nilpotent subgroups. A non-elementary subgroup of $\PSL$ contains rank $2$ non abelian free subgroups and in particular is non-radical. 
 
Let now $(\la_p)\in K_i^\nn$ be   sequence converging to some  $\la \in K$.  For every $p$, 
$\rho_{\la_p}$ is a discrete faithful non-elementary representation of $G/H_i$, which is therefore non-radical. It is clear that $H_i\subset \ker(\rho_\la)$ so $\rho_\la$ can be viewed as a representation of $G/H_i$. Hence by Chuckrow's theorem $\ker(\rho_\la) = H_i$, and we conclude that $K_i$ is closed. 

\medskip

 We have thus written $K$ as an at most countable union of  disjoint  closed sets $K_i$. A (not so well-known!) theorem of Sierpi\'nski \cite{sierpinski} asserts that such a decomposition must be trivial. The result is proved.
\end{proof}

\begin{rmk}
If $\dim(P(X)) =1$, %or more generally, on a 1-dimensional plane slice of $B(X)$, 
it is not necessary to use the density theorem. Indeed
in dimension 1, polynomial convexity  simply means that $K^c$ has no bounded component 
(there is no such simple topological characterization in higher dimension).  
 What our argument says is that 
$\overline{B(X)}$ is contained in a polynomially convex set $K$ with $\mathring{K} = B(X)$. This
 directly implies that $\overline{B(X)}^c$ has no bounded components (this is left as an exercise to the reader).
 \end{rmk}

\subsection{Exterior powers of the bifurcation current}

\begin{proof}[Proof of Theorem \ref{theo:tbif}]
The argument is based on the fact that an isolated minimum of the continuous psh function $\delta$ must belong to $\supp(dd^c\delta)^{3g-3}$. %\note{propager la notation $n$ pour le nombre de cusps?}  
This is a consequence of  the so-called {\em comparison principle} for psh functions \cite[Thm A]{bedford taylor}.
A similar  idea appears in  the work of Bassanelli and Berteloot (see \cite[Prop. 6.3]{basber}).  

It is a theorem due to Hejhal \cite[Thm 6]{hejhal schottky} that covering projective structures with Schottky holonomy are isolated points of $\set{\delta=0}$. Therefore,  for such a $\sigma_0$,  we infer that $\sigma_0 \in \supp(\tbif^{3g-3})$. Finally, we use a result 
due to Otal \cite{otal} (see also Ito \cite{ito}): when $X$ is compact, $\fr B(X)$ is contained in the accumulation set of projective structures with degree 0 and Schottky holonomy.   We conclude that 
$\fr B(X)\subset \supp(\tbif^{3g-3})$. 
\end{proof} 

Corollary \ref{coro:deterministic} is an immediate consequence of the following equidistribution result 
 in the spirit of \cite[Thm C]{Bers1}. 
If $\gamma$ is a closed geodesic on $X$ we let 
$$Z(\gamma, t) = \set{\sigma\in P(X), \ \tr^2(\mathsf{hol}_\sigma) = t}$$ 
(notice that since $\mathsf{hol}_\sigma$ is well-defined up to conjugacy, so it makes sense to speak of its trace). 
Fix a sequence $(r_n)_{n\geq 1}$ 
such that for every $c>0$, the series $\sum e^{-cr_n}$ converges.
The notion of a random sequence of geodesics of length at most $r_n$ was discussed at length in \cite{Bers1}.
We say that a holomorphic family of representations 
$(\rho_\la)_{\la\in \La}$ is {\em reduced}  if the associated mapping
$\La\cv\mathcal X(\pi_1(X), \PSL)$ to the character variety has discrete fibers. Notice that since 
$\mathsf{hol}:P(X)\cv \mathcal X(\pi_1(X), \PSL)$ is injective, this property is satisfied in the context of Corollary \ref{coro:deterministic}. 

\begin{prop}
Let $X$ be a hyperbolic Riemann surface of finite type and $(\rho_{\la})_{\la\in \La}$ be a reduced 
holomorphic family of non-elementary representations of $\pi_1(X)$, with $\dim (\La)\geq k$. 
Let $(r_n)$ be as above, and fix $t\in \cc$.
 For $i= 1,\ldots , k$ fix a   sequence $(\gamma_n^i)$ of independent random 
closed geodesics of length at most $r_n$. Then almost surely, 
\begin{equation}\label{eq:tbifk}
\lim_{n_1\cv\infty} \cdots \lim_{n_k\cv\infty} 
\unsur{4^k\prod_{i=1}^k \mathrm{length}(\gamma_{n_i}^i)} \big[Z(\gamma_{n_1}^1, t)\cap \cdots \cap Z(\gamma_{n_k}^k, t)\big] 
= T_{\rm bif}^k,
\end{equation}
\end{prop}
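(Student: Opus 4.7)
The proof proceeds by induction on $k$, the base case $k=1$ being Theorem~C of \cite{Bers1}. The key observation is that each divisor arises as $dd^c$ of a natural psh potential:
\[
\frac{1}{4\,\length(\gamma)}[Z(\gamma,t)] \;=\; dd^c u_\gamma, \qquad u_\gamma(\la) \;:=\; \frac{1}{4\,\length(\gamma)} \log\bigl\lvert \tr^2 \rho_\la(\gamma) - t \bigr\rvert.
\]
Under the hypotheses of the proposition, Theorem~C of \cite{Bers1} asserts that, for each fixed index $i$, almost surely $u_{\gamma_n^i}\to\chi$ in $L^1_{\rm loc}(\La)$ as $n\to\infty$, equivalently $dd^c u_{\gamma_n^i}\to \tbif$ weakly. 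Moreover $\chi$ is continuous psh on $\La$ (via $\hol$), so all the Bedford-Taylor wedge powers $\tbif^j$, $1\le j\le \dim\La$, are well-defined positive closed currents.

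The iterated limits are peeled off one at a time, starting with the innermost. Holding $n_1,\dots,n_{k-1}$ fixed, put
\[
S \;:=\; \frac{1}{4^{k-1}\prod_{i<k}\length(\gamma_{n_i}^i)}\bigl[Z(\gamma_{n_1}^1)\cap\cdots\cap Z(\gamma_{n_{k-1}}^{k-1})\bigr],
\]
a positive closed current of bidegree $(k-1,k-1)$ on $\La$. The inner limit reduces to
\[
dd^c u_{\gamma_{n_k}^k}\wedge S \;\longrightarrow\; \tbif\wedge S \qquad\text{as } n_k\to\infty.
\]
Since $u_{\gamma_n^k}\to\chi$ in $L^1_{\rm loc}$ with $\chi$ continuous, and since the reducedness hypothesis on $(\rho_\la)$, combined with $\dim\La\ge k$ and the independence of the random geodesics, ensures almost surely that $\tr^2\rho_\la(\gamma_n^k)-t\not\equiv 0$ on any irreducible component of $\supp(S)$, this convergence is given by the Bedford-Taylor continuity of the wedge product against a fixed positive closed current when the limiting potential is continuous. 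Iterating this scheme for the successive outer limits $n_{k-1}\to\infty,\dots,n_1\to\infty$---at the $j$-th step wedging against a fixed current of the form $[\text{divisor intersection}]\wedge \tbif^{j-1}$, which is legitimate because $\chi$ is continuous---produces the announced limit $\tbif^k$.

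The main technical obstacle is precisely this continuity step, because the potentials $u_{\gamma_n}$ have $-\infty$ loci along the hypersurfaces $Z(\gamma_n,t)$ and hence are not locally bounded, so the textbook Bedford-Taylor theorem for bounded potentials does not apply verbatim. This is handled by the standard truncation device $u_n^{(M)}:=\max(u_n,-M)$: the truncated potentials are bounded psh and $L^1_{\rm loc}$-converge to $\max(\chi,-M)$, so the classical continuity theorem applies to them. The contribution of the set $\{u_n<-M\}$ tends to $0$ uniformly in $n$ as $M\to\infty$, thanks to the local uniform upper bound on $u_n$ (an immediate consequence of $L^1_{\rm loc}$ convergence to a continuous limit) and uniform control of the mass of the fixed currents $S\wedge\tbif^{j-1}$ on compacta (obtained from their cohomology class, which is independent of $n_k$). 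A very similar truncation argument appears in \cite{basber} in the complex dynamics setting.
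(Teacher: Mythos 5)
There is a genuine gap, and it sits exactly at the step you call ``the main technical obstacle''. Your inner limit asks for $dd^c u_{\gamma^k_{n_k}}\wedge S\to \tbif\wedge S$, where $S$ is the (fixed) normalized intersection current of the previous divisors. But the only input you have on $u_{\gamma^k_{n}}$ is convergence to $\chi$ in $L^1_{\rm loc}$ with respect to \emph{Lebesgue} measure on $\La$ (equivalently, weak convergence of the divisors to $\tbif$ on the ambient space). The trace measure of $S$ is carried by an analytic subvariety, which is Lebesgue-null, so this convergence gives no information whatsoever about the behaviour of $u_{\gamma^k_n}$ on $\supp S$; in general, equidistribution of divisors on $\La$ does not survive wedging with, or restriction to, a fixed singular positive closed current. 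The truncation device does not repair this: to discard the contribution of $\{u_n<-M\}$ uniformly in $n$ you would need uniform integrability of the $u_n$ with respect to the trace measure of $S$ (or of $S\wedge\tbif^{j-1}$), and neither the Hartogs-type upper bound nor the cohomological mass bound provides any lower control of $u_n$ on $\supp S$ -- the divisors $Z(\gamma^k_n,t)$ may cluster along $\supp S$ and make $u_n$ arbitrarily negative there. Even for \emph{bounded} potentials, $L^1_{\rm loc}$ convergence alone is not enough for Bedford--Taylor continuity of mixed wedge products; one needs monotone convergence or convergence in capacity (relative to the current one wedges against). Your reducedness remark only guarantees that the wedge product is \emph{defined} ($u_n\not\equiv-\infty$ on components of $\supp S$), which is far from convergence.

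The paper's proof circumvents this in a different way, and the comparison is instructive. The induction is arranged so that the \emph{innermost} limit (the newest index) is obtained by applying Theorem C of \cite{Bers1} afresh to the family $(\rho_\la)$ \emph{restricted to the subvariety} $Z(\gamma^1_{n_1},t)\cap\cdots\cap Z(\gamma^k_{n_k},t)$: this is a new codimension-one equidistribution statement on that subvariety, not a consequence of the one on $\La$. This is precisely where the reducedness hypothesis (the restricted family is nonconstant) and the \emph{independence} of the random sequences (so that, conditionally on $\gamma^1,\dots,\gamma^k$, the almost-sure statement of Theorem C holds simultaneously on the countably many subvarieties involved) are genuinely used. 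The outer limits are then handled by the induction hypothesis together with the only wedge-continuity that is actually legitimate here: if positive closed currents $S_n$ of locally uniformly bounded mass converge weakly and $\chi$ is continuous, then $\chi S_n\to\chi S$, hence $S_n\wedge\tbif\to S\wedge\tbif$ -- i.e.\ the \emph{varying} object is the intersection current and the \emph{fixed} one is the continuous potential, the reverse of your set-up. If you want to salvage your scheme, the fix is exactly this restriction trick: the needed convergence against $S$ is the statement of Theorem C for the family over $\supp S$, which must be invoked directly rather than deduced from equidistribution on $\La$.
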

  Note that in \eqref{eq:tbifk} the intersections are counted with multiplicity (i.e. in the sense of holomorphic chains, see \cite[Chap. 12]{chirka}).

\begin{proof} The proof is similar to that of \cite[Thm 6.16]{preper}. We argue by induction on $k$. For $k=1$ this is \cite[Thm C]{Bers1}. Now assume that the result has been proved for $k$. Since $T_{\rm bif}$ has continuous potential, it follows from 
\eqref{eq:tbifk} that 
$$\unsur{4^k\prod_{i=1}^k  \mathrm{length}(\gamma_{n_i}^i)} \big[Z(\gamma_{n_1}^1, t)\cap \cdots \cap Z(\gamma_{n_k}^k, t)\big] \wedge 
\tbif$$ converges to $\tbif^{k+1}$ as $n_k, \ldots , n_1\cv\infty$ successively. Now, when $n_1, \ldots , n_k$ are large and fixed, we apply 
\cite[Thm C]{Bers1} to the family $(\rho_{\la})_{\la\in Z(\gamma_{n_1}^1, t)\cap \cdots \cap Z(\gamma_{n_k}^k, t)}$  (the reducedness assumption is used here to ensure that this family is non constant) to get that 
\begin{align*}\lim_{n_{k+1}\cv\infty} 
\unsur{4^{k+1}\prod_{i=1}^{k+1} \mathrm{length}(\gamma_{n_i}^i)} & \big[Z(\gamma_{n_1}^1, t)\cap \cdots \cap Z(\gamma_{n_{k+1}}^{k+1}, t)\big] \\ &= \unsur{4^k\prod_{i=1}^k \mathrm{length}(\gamma_{n_i}^i)} \big[Z(\gamma_{n_1}^1, t)\cap \cdots \cap Z(\gamma_{n_k}^k, t)\big] \wedge \tbif \end{align*}
and we are done. \end{proof}

\appendix

\section{Branched projective stuctures}\label{sec:appendix} 

A branched $\mathbb P^1$-structure on $X$ 
 is by definition an equivalence class of development-holono\-my pairs $(\dev, \hol)$, where  $\hol$ is a representation of $\pi_1(X) $ with 
 values in $\text{PSL}(2,\mathbb C)$ and $\dev : \widetilde{X} \rightarrow \mathbb P^1$ is a $\hol$-equivariant (non constant) meromorphic 
 map. Two development-holonomy pairs are considered as equivalent if they are of the form $(\mathsf{dev},\mathsf{hol})$ and $(A\circ 
 \mathsf{dev},A \circ \mathsf{hol}\circ A^{-1})$ for some $A\in \PSL$. Thus the only difference with the classical (unbranched)
  case is that the developing 
 maps are allowed to have critical points. These points are organized as a finite number of orbits under $\pi_1(X)$, 
 and their projections in $X$ are called the branched points. 
 If $X$ is a punctured Riemann surface, we can define a notion of branched parabolic $\mathbb P^1$-structure   exactly as before,
 by specifying it to be induced by $\log z$ near  the cusps. 

\medskip 

Examples of branched $\mathbb P^1$-structures come from conformal metrics with constant curvature $-1,0,1$ on $X$ and conical angles 
multiple of $2\pi$. In particular, a non constant meromorphic map from $X$ to $\mathbb P^1$ defines a branched $\mathbb P^1$-structure 
with trivial holonomy. Quadratic differentials are other kind of examples associated with a flat metric. 
We refer more generally to \cite{veech}. Those examples of non negative curvature have elementary holonomies. Nevertheless, the 
holonomy of a branched $\mathbb P^1$-structure induced by a conformal metric of curvature $-1$ and conical angle multiple of $2\pi$ is 
always non elementary. We refer to \cite{troyanov} for the construction of many examples. 

When $X$ is compact, Gallo, Kapovich and Marden \cite{gkm} showed that if $\rho: \pi_1(X)\rightarrow \text{PSL}(2,\mathbb C)$ is a non 
elementary representation which does not lift to $\text{SL}(2,\mathbb C)$, then $\rho$ is the holonomy  of a branched  $\mathbb P^1$-
structure with exactly one branch point of angle $4\pi$ (for a certain Riemann surface structure on $X$ which depends on $\rho$). 
On the other hand $\rho$ is not the holonomy of a unbranched $\mathbb P^1$-structure. 

\medskip

Some of our results extend %stricto censu 
{\em mutatis mutandis}
to  branched projective structures with non elementary holonomy. 
For instance, the degree of a branched $\mathbb P^1$-structure is defined exactly as in Definition-Proposition \ref{defprop:degree}, 
the proof being identical to the unbranched case. The Lyapunov exponent depends only on the Riemann surface structure and on the 
holonomy representation, hence it has already been defined in our previous work \cite{Bers1}. 
Finally, Hussenot's   definition of the harmonic measures was  actually
 introduced  in the context of branched $\mathbb P^1$-structures with non elementary holonomy. 

\medskip

In this appendix we indicate how our formula relating the Lyapunov exponent to the degree needs to be modified in the branched case. 

\begin{theo} \label{theo:formula branched}
Let $\sigma$ be a parabolic branched $\mathbb{P}^1$ structure on a hyperbolic 
 Riemann surface $X$ of finite type. Let $k$ denote the number of branched points, counted with multiplicity.
%   and let     $\chi(\sigma)$,   $\delta(\sigma)$, and $\deg(\sigma)$   be as in Theorem \ref{theo:formula}. 
Then, with notation as in Theorem \ref{theo:formula},  the following formula  holds:
$$% \begin{equation}\label{eq:formula}
 \displaystyle \chi (\sigma) = \frac{1}{2} + 2\pi\delta(\sigma)  - \frac{k}{\abs{\mathrm{eu(X)}}}  = \frac12+ \frac{\deg(\sigma) - k }{\abs{\mathrm{eu(X)}}}
$$% \end{equation}
\end{theo}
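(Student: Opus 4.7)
The plan is to adapt the proof of Theorem~\ref{theo:formula}: reduce $\chi(\sigma)$ to a cohomological pairing on $\overline{M}_\sigma$ via Proposition~\ref{p:weaker}, and then re-express that pairing using the now-branched section $\overline{s}: \overline{X} \to \overline{M}_\sigma$, which is tangent to $\overline{\mathcal{F}}$ at the $k$ branch points of $\dev$ (counted with multiplicity equal to the ramification orders minus one). I would first verify that the constructions of Sections~\ref{sec:lyapunov}--\ref{sec:proof} transfer to the branched setting: the suspension $\overline{M}_\sigma$, foliation $\overline{\mathcal{F}}$ and harmonic current $\overline{T}$ depend only on $\hol_\sigma$ and are unaffected by the branching; the degree is well-defined by Definition-Proposition~\ref{defprop:degree} since generic points of $\pu$ are not critical values; and the identity $\overline{T}\cdot \overline{s}=\deg(\sigma)$ of Proposition~\ref{p:cohomological expression of the degree} persists because the tangent leaves through branch points form a countable set of mass zero for the (diffuse) harmonic current. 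Crucially, Proposition~\ref{p:weaker}
\[ \chi(\sigma) = \frac{1}{2\abs{\mathrm{eu}(X)}}\bigl(N_{\overline{\mathcal{F}}}\cdot \overline{T} + \#P\bigr) \]
goes through verbatim, since branch points lie in the interior of $X$, disjoint from the cusps, and therefore the local analysis at the cusps and the normalization $I=1$ (fixed by the Fuchsian case) are unchanged.

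The new ingredient is the computation of the intersection numbers involving $\overline{s}$. In local coordinates $(x,y)$ around a branch point of ramification order $e_p$, where $\overline{s}=\{y=\dev(x)\}$ and $\overline{\mathcal{F}}$ is the horizontal foliation, the canonical morphism $T\overline{s}\to N_{\overline{\mathcal{F}}}|_{\overline{s}}$ (inclusion into $T\overline{M}_\sigma$ followed by projection modulo $T\overline{\mathcal{F}}$) is, in the natural local frames, multiplication by $\dev'(x)$, which vanishes to order $e_p-1$ at the branch point. Summing these contributions over all branch points gives a cohomological correction to $N_{\overline{\mathcal{F}}}\cdot \overline{s}$ relative to the unbranched formula $N_{\overline{\mathcal{F}}}\cdot \overline{s}=\mathrm{eu}(\overline{X})$, and an analogous computation using the dual morphism $T\overline{\mathcal{F}}|_{\overline{s}}\to N_{\overline{s}}$ modifies the self-intersection $\overline{s}^2$ correspondingly. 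Together with the unchanged identities $N_{\overline{\mathcal{F}}}\cdot f=2$ and $\overline{s}\cdot f=1$, these intersection numbers determine $[N_{\overline{\mathcal{F}}}]$ in the basis $\{[\overline{s}],[f]\}$ of $H^2(\overline{M}_\sigma)$.

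Substituting this expression into the cohomological formula for $\chi$, together with $\overline{T}\cdot \overline{s}=\deg(\sigma)$ and $\overline{T}\cdot f=1$, yields a relation between $\chi(\sigma)$, $\deg(\sigma)$ and $k$ which, once combined with the Gauss--Bonnet identity $\vol(X)=2\pi\abs{\mathrm{eu}(X)}$ and $\mathrm{eu}(\overline{X})=\mathrm{eu}(X)+\#P$, produces the claimed formula; the extra $-k/\abs{\mathrm{eu}(X)}$ term records the contribution of the tangencies. The main obstacle is the precise bookkeeping of the local multiplicities at each branch point: the cohomological intersection theory counts tangencies with their algebraic multiplicity, and these multiplicities must be tracked carefully in both $N_{\overline{\mathcal{F}}}\cdot \overline{s}$ and $\overline{s}^2$ to produce the correct coefficient of $k$. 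As consistency checks, the formula reduces to Theorem~\ref{theo:formula} when $k=0$ and recovers $\chi=1/2$ in the Fuchsian case; it should also be directly verifiable on branched projective structures induced by hyperbolic conical metrics with cone angles in $2\pi\mathbb{N}$, for which the branch data and some of the dynamical quantities can be computed explicitly.
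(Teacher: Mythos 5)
Your proposal is correct and follows essentially the same route as the paper's own proof: Proposition \ref{p:weaker} and the index computation at the cusps are reused verbatim (branch points avoid the punctures), and the tangency corrections to $N_{\overline{\mathcal F}}\cdot \overline{s}$ and $\overline{s}^{\,2}$, which you obtain from the local vanishing of the natural maps $T\overline{s}\to N_{\overline{\mathcal F}}\rest{\overline{s}}$ and $T\overline{\mathcal F}\rest{\overline{s}}\to N_{\overline{s}}$ at the branch points, are exactly what the paper gets by invoking the tangency formula $N_{\mathcal G}\cdot C=\mathrm{eu}(C)+|\mathrm{tang}(\mathcal G,C)|$ (quoted from Brunella). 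The only difference is presentational: you rederive that formula from the local picture rather than citing it, and you make explicit the (correct) measure-zero argument showing that $\overline{T}\cdot\overline{s}=\deg(\sigma)$ persists despite the finitely many tangent leaves.
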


\begin{proof} [Sketch of proof]
%We only sketch the proof, since the method is exactly the same. 
Introduce as in the unbranched case 
the flat bundle $(\overline{M_\sigma} , \overline{\mathcal F_\sigma})$, the holomorphic section $\overline{s}$ (the compactification of the 
graph of the developing map at the level of the universal cover) and the normalized harmonic current $\overline{T}$ giving mass $1$ to the generic fibers. 
Proposition \ref{p:weaker} %(which is still valid in the branched case) and the computation of the index $I$ made in \ref{sec:proof} show that  
 holds without modification, as well as   the computation of the index $I$ made in \S \ref{sec:proof} 
 (which is local near the punctures), so we infer that  
$$\chi (\sigma)  = \displaystyle \frac{1}{2\abs{\mathrm{eu}(X)} } ( N_{\overline{\mathcal F}} \cdot \overline{T} + \#P). $$

Now if $\mathcal G$ is a singular holomorphic foliation on a complex surface, and $C $ is a non singular compact holomorphic curve not everywhere tangent to $\mathcal G$, and not intersecting the singular set of $\mathcal G$, we have 
\[ N_{\mathcal G} \cdot C = \mathrm{eu} (C)  + | \mathrm{tang} (\mathcal G, C) |, \]
where the tangency points are counted with multiplicities (see \cite{brunella} for details).   
Hence formula \eqref{eq:npoint} has to be replaced by 
\begin{equation}\label{eq:npoint2}
N_{\overline{\mathcal{F} } }\cdot \overline s = \mathrm{eu} (\overline s) + |\mathrm{tang} (\overline{\mathcal F}, \overline{s} ) |= \mathrm{eu} (\overline X) + k \text{ and } 
N_{\overline{\mathcal{F}}}\cdot f = \mathrm{eu}(\pu) = 2.
\end{equation}
We also have 
\[  \overline{s}^2 = \mathrm{eu} (\overline{s}) + |\mathrm{tang} (\overline{\mathcal F}, \overline{s} ) | = \mathrm{eu} (\overline{s})  + k. \]
So we infer that 
$$[ N_{\overline{\mathcal F}} ] = 2 [\overline{s}] - (\mathrm{eu}(\overline{X} ) + k) [ f ] ,$$
%The end of the proof follows exactly the same strategy as in the proof of Theorem \ref{theo:formula} (section \ref{sec:proof}) and is left to the reader. 
and we conclude as in the proof of Theorem \ref{theo:formula}.
\end{proof}

\end{document}